\newcommand\A{\ensuremath{\mathbb A}}
\newcommand\C{\ensuremath{\mathbb C}}
\renewcommand\H{\ensuremath{\mathbb H}}
\newcommand\Q{\ensuremath{\mathbb Q}}
\newcommand\R{\ensuremath{\mathbb R}}
\newcommand\Z{\ensuremath{\mathbb Z}}
\newcommand\id{\operatorname{id}}
\let\eps=\varepsilon
\let\thet=\vartheta
\let\phy=\varphi
\newcommand\im{\operatorname{im}}
\newcommand\rk{\operatorname{rk}}
\newcommand\del{\partial}
\newcommand\norm[1]{\left\|#1\right\|}
\newcommand\abs[1]{\left|#1\right|}
\newcommand\<{\langle}
\renewcommand\>{\rangle}
\newcommand\Hom{{\operatorname{Hom}}}
\newcommand\End{{\operatorname{End}}}
\newcommand\Vol{{\operatorname{vol}}}
\newcommand\ch{\operatorname{ch}\nolimits}
\newcommand\sign{\operatorname{sign}}
\newcommand\trace{\operatorname{tr}}
\newcommand\spec{\operatorname{spec}}
\newcommand\Pf{\operatorname{Pf}}
\newcommand\even{{\mathrm{even}}}
\newcommand\Span{\operatorname{span}\nolimits}
\newcommand\punkt{\mathord{\,\cdot\,}}
\renewcommand\Im{\operatorname{Im}}
\let\ek=\mu
\newcommand\lk{\operatorname{lk}}
\newcommand\dotcup{\mathbin{\dot\cup}}
\newcommand\Res{\operatorname{Res}}
\newcommand{\SO}{\mathrm{SO}}
\newcommand{\SU}{\mathrm{SU}}
\newcommand{\GL}{\mathrm{GL}}
\newcommand{\Spin}{\mathrm{Spin}}
\newcommand{\Pin}{\mathrm{Pin}}
\newcommand{\Sp}{\mathrm{Sp}}
\newcommand{\sP}{\mathfrak{sp}}
\newcommand{\U}{\mathrm{U}}
\renewcommand{\O}{\mathrm{O}}
\newcommand{\Ad}{\operatorname{Ad}}
\newcommand{\frg}{\mathfrak g}
\numberwithin{equation}{section}
\numberwithin[\alph]{subsection}{section}
\theoremstyle{plain}
\newtheorem{Lemma}{Lemma}
\numberwithin{Lemma}{section}
\newtheorem{Proposition}[Lemma]{Proposition}
\newtheorem{Corollary}[Lemma]{Corollary}
\newtheorem{Theorem}[Lemma]{Theorem}
\theoremstyle{definition}
\newtheorem{Definition}[Lemma]{Definition}
\theoremstyle{remark}
\newtheorem{Remark}[Lemma]{Remark}
\newtheorem{Example}[Lemma]{Example}
\begin{document}

\title[Adiabatic limits of Seifert fibrations]%
{Adiabatic limits of Seifert fibrations,\\Dedekind sums,
and the diffeomorphism type\\of certain 7-manifolds}

\author{Sebastian Goette}
\address{Mathematisches Institut\\
Universit\"at Freiburg\\
Eckerstr.~1\\
79104 Freiburg\\
Germany}
\email{sebastian.goette@math.uni-freiburg.de}
\thanks{Supported in part by DFG special programme
``Global Differential Geometry''}
\subjclass[2000]{58J28 (primary) 57R55 (secondary)}

\begin{abstract}
  We extend the adiabatic limit formula for $\eta$-invariants
  by Bismut-Cheeger and Dai to Seifert fibrations.
  Our formula contains a new contribution from the singular fibres
  that takes the form of a generalised Dedekind sum.

  As an application,
  we compute the Eells-Kuiper and $t$-invariants
  of certain cohomogeneity one manifolds that were studied by Dearricott, Grove,
  Verdiani, Wilking, and Ziller.
  In particular, we determine the diffeomorphism type
  of a new manifold of positive sectional curvature.
\end{abstract}

\maketitle

Manifolds of positive sectional curvature are a rare phenomenon,
and the differential topological conditions for the existence
of positive sectional curvature metrics are not yet fully understood.
For this reason,
one is still interested in finding new examples of positive sectional
curvature metrics.
Most known examples are quotients or biquotients of compact Lie groups.
Cohomogeneity one manifolds constitute another potential source of examples.
By work of Grove, Wilking and Ziller~\cite{GWZ},
there are only two families~$(P_k)$, $(Q_k)$
of seven-dimensional cohomogeneity one manifolds,
which possibly allow metrics of positive sectional curvature
and contain new examples.
The space~$R$ mentioned there does not admit a positive sectional curvature
metric by~\cite{VW}.
Grove, Verdiani and Ziller have succeeded in~\cite{GVZ}
to construct a positive sectional curvature metric on~$P_2$,
the first nontrivial member of the family~$(P_k)$.
This manifold is homeomorphic to the unit tangent bundle~$T^1S^4$
of the four-dimensional sphere.
In this paper, we will specify among other things an exotic sphere~$\Sigma$
such that~$P_2$ is diffeomorphic to the connected
sum of~$T^1S^4$ and~$\Sigma$.

The manifolds~$P_k$ are highly connected
with a finite cyclic cohomology group~$H^4(P_k)\cong\pi_3(P_k)\cong\Z/k\Z$.
By Crowley's work~\cite{C},
it suffices to compute the Eells-Kuiper invariant~$\mu(P_k)$
and a certain quadratic form~$q$ on~$H^4(P_k)$
to determine their diffeomorphism types.
These two invariants are classically defined on oriented spin manifolds~$N$
bounding~$P_k$, but it is not clear how to construct such a manifold~$N$
directly.
On the other hands, by results of Donnelly~\cite{Do1}, Kreck and Stolz~\cite{KS}
and Crowley and the author~\cite{CG},
both invariants can equivalently be expressed as linear combinations
of $\eta$-invariants of certain Dirac operators
and Cheeger-Chern-Simons correction terms on~$P_k$ itself.
Having computed these invariants,
one can write the spaces~$P_k$ as connected sums of exotic spheres
and $S^3$-bundles over~$S^4$ using the computations for these bundles
in~\cite{CE}.
In order to determine the necessary $\eta$-invariants,
we write the spaces~$P_k$ as Seifert fibrations as indicated in~\cite{GWZ}.
That is, the spaces~$P_k$ are fibered by compact manifolds over
some base orbifold~$B$.

The process of blowing up the base space of a fibration~$M\to B$
by a factor~$\eps^{-1}$ is called the adiabatic limit.
It has been shown by Bismut, Cheeger~\cite{BC1} and Dai~\cite{Dai}
that the $\eta$-invariants of a family of compatible
Dirac operators~$D_{M,\eps}$ converge in the adiabatic limit~$\eps\to 0$,
if the kernels of the associated vertical Dirac operators~$D_X$
form a vector bundle~$H\to B$.
This result can be generalised to Seifert fibrations~$M\to B$.
Thus, we consider adiabatic families of Dirac operators~$(D_{M,\eps})_\eps$
as in Definition~\ref{A3.D1}.
In particular,
we assume that~$H=\ker(D_X)$ is a vector orbibundle on~$B$.
Let~$\Lambda B$ be the inertia orbifold of~$B$
and let~$\hat A_{\Lambda B}(TB,\nabla^{TB})\in\Omega^\bullet(\Lambda B;\widetilde{\Lambda B}\otimes o(\Lambda B))$
denote the orbifold $\hat A$-form as in Kawasaki's
index theorem~\cite{Kawa}, see section~\ref{A2}.
Let~$\A$ denote Bismut's Levi-Civita superconnection
associated with~$D_{M,\eps}$.
In Definition~\ref{A3.D2}, we construct orbifold
$\eta$-forms~$\eta_{\Lambda B}(\A)\in\Omega^\bullet(\Lambda B;\widetilde{\Lambda B})$ as in~\cite{BC1} and~\cite{Go}.
In Definition~\ref{A3.D3}, we define the effective horizontal
operator~$D_B^{\mathrm{eff}}$
of the family~$D_{M,\eps}$, which acts on sections of~$H\to B$.
Let~$(\lambda_\nu(\eps))_\nu$ denote the finite family
of very small eigenvalues of~$D_{M,\eps}$, see section~\ref{A3}.
In~\cite{Rochon}, Rochon proved a special case of the following theorem
where~$B$ is a very good orbifold and the fibrewise operator is invertible.

\begin{Theorem}[cf.\ \cite{BC1}, \cite{Dai}, \cite{Rochon}]\label{Thm1}
  Let~$p\colon M\to B$ be a Seifert fibration
  and~$(D_{M,\eps})_\eps$ an adiabatic family of Dirac operators over~$M$
  as in Definition~\ref{A3.D1}.
  For~$\eps_0>0$ sufficiently small,
  we have
  \begin{equation*}
    \lim_{\eps\to 0}\eta(D_{M,\eps})
    =\int_{\Lambda B}\hat A_{\Lambda B}\bigl(TB,\nabla^{TB}\bigr)
		\,2\eta_{\Lambda B}(\A)
    +\eta\bigl(D_B^{\mathrm{eff}}\bigr)
    +\sum_{\nu}\sign(\lambda_\nu(\eps_0))\;.
  \end{equation*}
\end{Theorem}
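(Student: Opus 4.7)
\emph{Strategy.} My plan is to adapt the Bismut-Cheeger-Dai proof to the orbifold setting by working locally on uniformizing charts of $B$ and assembling the contributions via an averaging procedure as in Kawasaki's index theorem. Starting from the Mellin representation of $\eta(D_{M,\eps})$ as a regularized integral of $\trace\bigl(D_{M,\eps}\,e^{-t D_{M,\eps}^2}\bigr)$, I would split the $t$-integral into three regimes: a small-$t$ regime, where a Getzler-type rescaling yields a local index contribution; an intermediate regime controlled by Bismut's Levi-Civita superconnection $\A$; and a large-$t$ regime, governed by the finite-dimensional kernel orbibundle $H\to B$ and the effective operator $D_B^{\mathrm{eff}}$.

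\emph{Regular and singular parts of $B$.} Over the regular locus of $B$, the Seifert fibration is a genuine submersion and the original Bismut-Cheeger-Dai theorem applies, contributing $\int_{B_{\mathrm{reg}}}\hat A(TB,\nabla^{TB})\,2\eta(\A)+\eta\bigl(D_B^{\mathrm{eff}}\bigr)$ together with the sign terms coming from the very small eigenvalues. Near a singular orbit of $B$ I would choose a uniformizing chart $\tilde U\to U=\tilde U/\Gamma$ on which the pulled-back fibration $\tilde M\to\tilde U$ is smooth and carries a compatible $\Gamma$-action; the local $\eta$-contribution is then the $\Gamma$-average of the corresponding traces on $\tilde M$. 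Applying an equivariant version of Bismut's local index theorem to each conjugacy class $[\gamma]\subset\Gamma$ and integrating over its fixed-point set produces, after summing over all charts and all conjugacy classes, exactly the orbifold integral $\int_{\Lambda B}\hat A_{\Lambda B}(TB,\nabla^{TB})\,2\eta_{\Lambda B}(\A)$ of the statement.

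\emph{Spectral bookkeeping.} For the large-$t$ regime I would follow Dai's splitting of the spectrum of $D_{M,\eps}$ into a very small family of size $O(\eps)$, a small family converging to the spectrum of $D_B^{\mathrm{eff}}$, and a large family controlled by the fibrewise spectral gap guaranteed by Definition~\ref{A3.D1}. The small eigenvalues contribute $\eta\bigl(D_B^{\mathrm{eff}}\bigr)$ in the limit, whereas for a sufficiently small threshold $\eps_0$ the very small eigenvalues no longer cross zero as $\eps$ decreases, so they contribute only the signs $\sum_\nu\sign(\lambda_\nu(\eps_0))$. Combined with the previous paragraph this yields the stated formula.

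\emph{Main obstacle.} The principal technical difficulty is the uniform heat kernel analysis across the singular stratum of $B$: one must establish Gaussian bounds and Duhamel-type expansions for $e^{-t D_{M,\eps}^2}$ that are uniform in $\eps$ despite the orbifold singularities, and verify that $H=\ker(D_X)$ is genuinely a vector orbibundle on $B$ with a well-defined effective Dirac operator $D_B^{\mathrm{eff}}$. A secondary issue is controlling the $\eta$-form $\eta_{\Lambda B}(\A)$ near $t=0$ and $t=\infty$; here I would adapt the superconnection rescaling estimates from Berline-Getzler-Vergne to the equivariant setting on each uniformizing chart, so that the fixed-point localisation compatible with the Kawasaki decomposition survives taking the Mellin transform.
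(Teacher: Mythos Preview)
Your overall architecture---heat trace representation of $\eta$, Getzler rescaling with Kawasaki-style equivariant localisation for the short-time contribution, and Dai's spectral trichotomy for the long-time contribution---is indeed the paper's strategy. But your organisation of the pieces contains a genuine slip: you attribute the term $\eta(D_B^{\mathrm{eff}})$ and the sign terms to the contribution ``over the regular locus of $B$'' via a direct application of Bismut--Cheeger--Dai there. That cannot work. The $\eta$-invariant is a global spectral quantity, and $\eta(D_B^{\mathrm{eff}})$ arises from the \emph{global} long-time behaviour of $\eps^{-1}D_{M,\eps}$, not from a local integral over $B_{\mathrm{reg}}$. The paper does not decompose by regular versus singular loci at all; instead it splits only the $t$-integral, into $[0,\eps^{\alpha-2}]$ and $[\eps^{\alpha-2},\infty)$, and then splits the second piece by the spectral projection $P_\eps$ onto the very small eigenvalues. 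The short-time piece alone produces the entire orbifold integral $\int_{\Lambda B}\hat A_{\Lambda B}\cdot 2\eta_{\Lambda B}(\A)$, with both the untwisted sector $B$ and the twisted sectors arising together from the decomposition $k_{\eps,t}([v,x],[v,x])=\sum_{\gamma\in\Gamma}\tilde k_{\eps,t}((v,x),\gamma(v,x))\circ\gamma$ in each orbifold chart.

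On the technical side, the paper does not rely on uniform Gaussian bounds or Duhamel expansions as you propose. It instead follows Bismut--Lebeau and Ma: write $\eps^{-1}D_{M,\eps}$ as a $2\times 2$ matrix with respect to $\ker D_X\oplus\im D_X$, prove resolvent estimates for the $(2,2)$-block using the fibrewise spectral gap, and control the full resolvent via the Schur complement $M_\eps(\lambda)$, which converges to $\lambda-D_B^{\mathrm{eff}}$. The long-time convergence to $\eta(D_B^{\mathrm{eff}})$ is then obtained by holomorphic functional calculus over a fixed contour $\Gamma=\Gamma_+\cup\Gamma_0\cup\Gamma_-$, with Schatten-norm estimates replacing your Gaussian bounds. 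This machinery is what makes the estimates uniform across the orbifold singularities without any separate analysis near the singular strata; you would need something equivalent to close your argument.
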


With this result,
one can compute the Eells-Kuiper invariant and the quadratic form~$q$
and hence determine the diffeomorphism type of each space~$P_k$.

\begin{Theorem}\label{Thm2}
  The Eells-Kuiper invariant of~$P_k$ is given by
  \begin{gather*}
    \ek(P_k)=-\frac{4k^3-7k+3}{2^5\cdot 3\cdot 7}\quad\in\quad\Q
	/\Z\;.
    \tag{1}\label{Thm2.1}
  \end{gather*}
  The quadratic form~$q$ on~$H^4(P_k)\cong\Z/k\Z$ is given by
  \begin{gather*}
    q(\ell)=\frac{\ell(\ell-k)}{2k}\quad\in\quad\Q/\Z\;.
    \tag{2}\label{Thm2.2}
  \end{gather*}
\end{Theorem}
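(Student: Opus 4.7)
The plan is to reduce Theorem~\ref{Thm2} to the adiabatic limit formula of Theorem~\ref{Thm1} via the intrinsic expressions for the two invariants on $P_k$ itself. Following Donnelly~\cite{Do1}, Kreck-Stolz~\cite{KS} and Crowley-Goette~\cite{CG}, I would first rewrite $\ek(P_k)$ and $q(\ell)$ as linear combinations of $\eta$-invariants of Dirac operators on $P_k$ twisted by complex line bundles representing the generator of $H^4(P_k)\cong\Z/k\Z$ and its multiples, together with Cheeger-Chern-Simons correction terms. Since $P_k$ is highly connected with finite cyclic $H^4$, once a generator $x\in H^4(P_k)$ is fixed the relevant twisting data and characteristic classes are determined, and $q(\ell)$ is the resulting expression evaluated on $\ell x$.

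Next, I would exhibit $P_k$ as a Seifert fibration $p\colon P_k\to B$ over a low-dimensional orbifold as indicated in~\cite{GWZ}, with singular locus and isotropy orders read off from the singular orbits of the cohomogeneity-one action. For each twisted Dirac operator entering the intrinsic formulas I would verify the hypotheses of Definition~\ref{A3.D1}, identify the vertical kernel orbibundle $H\to B$ and the effective horizontal operator $D_B^{\mathrm{eff}}$, and then apply Theorem~\ref{Thm1}. This expresses $\lim_{\eps\to 0}\eta(D_{M,\eps})$ as the sum of the local integral over the inertia orbifold $\Lambda B$, the $\eta$-invariant $\eta(D_B^{\mathrm{eff}})$, and the finite signed count $\sum_\nu\sign(\lambda_\nu(\eps_0))$.

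Each of these three contributions is then evaluated separately. The integral over $\Lambda B$ splits into a regular piece (an ordinary $\hat A$-integral which largely cancels against the Cheeger-Chern-Simons correction terms) and singular pieces localised at the fixed points of finite isotropy groups; the latter assemble into generalised Dedekind sums producing the cubic polynomial in the numerator of~\eqref{Thm2.1}. The horizontal term $\eta(D_B^{\mathrm{eff}})$ reduces to an $\eta$-invariant on a two-dimensional spin orbifold and is computed directly, while $\sum_\nu\sign(\lambda_\nu(\eps_0))$ is controlled by choosing $\eps_0$ small and tracking the very small eigenvalues through an explicit deformation. The quadratic form~\eqref{Thm2.2} falls out as a by-product: the quadratic term $\ell^2/(2k)$ is dictated by the linking form on $H^4(P_k)$, and the linear shift $-\ell/2$ reflects the spin refinement encoded in the Seifert data.

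The main obstacle will be the clean evaluation of the Dedekind-sum contribution from the singular strata of $\Lambda B$. This requires a precise identification of the isotropy representations on the various twisted spinor bundles over the singular fibres, together with careful bookkeeping of orientation conventions and local character sums. It is exactly here that the novel content of the paper, namely the singular-fibre correction introduced in Theorem~\ref{Thm1}, enters in an essential way; the collapse of the final answer to the simple cubic in~\eqref{Thm2.1} serves as a strong consistency check on these computations.
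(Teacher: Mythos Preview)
Your overall strategy matches the paper's, but several concrete details are off in ways that would derail the computation.

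First, the base orbifold $B$ is four-dimensional, not two-dimensional: $P_k$ is a principal $\Sp(1)$-orbibundle with $S^3$ fibres over a cohomogeneity-one $\SO(3)$-orbifold $B$ whose singular strata are two copies of $\R P^2$ with cyclic isotropy of odd order $p_\pm$ (here $p_-=1$, $p_+=2k-1$). Consequently there is no ``$\eta$-invariant on a two-dimensional spin orbifold'' to compute. In fact $\eta(D_B^{\mathrm{eff}})$ contributes nothing: for the untwisted Dirac operator the fibres $S^3$ have positive scalar curvature so $H=0$ and Corollary~\ref{A5.C1} applies, while for the odd signature operator $\eta(B_B^{\mathrm{eff}})=0$ because $\dim B$ is even (Corollary~\ref{A5.C2}). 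The very-small-eigenvalue term survives only for the signature operator and is read off from the $d_4$-differential of the Leray spectral sequence, giving $\sign(q_-^2p_+^2-q_+^2p_-^2)$.

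Second, for the quadratic form $q$ the paper uses the $t$-invariant of \emph{quaternionic} line bundles pulled back from $B$, not complex line bundles; since $P_k$ is $2$-connected, complex line bundles on $P_k$ are trivial. The computation of $q(\ell)$ is then a Cheeger--Simons integral, not a linking-form argument.

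Third, and most importantly, you have not addressed the step that actually produces the closed form~\eqref{Thm2.1}. The singular-fibre contribution is the generalised Dedekind sum $D(p,q)$ of Definition~\ref{C2.D1}, and for $P_k$ one must evaluate $D(2k-1,2k+1)$ explicitly. The paper does this by first simplifying $D(p,p+2)$ to a linear combination of $\sum_a\sin^{-2}(2\pi a/p)$ and $\sum_a\sin^{-4}(2\pi a/p)$, then evaluating these via a residue calculation (suggested by Zagier). This is the nontrivial arithmetic input without which the cubic $4k^3-7k+3$ does not emerge; ``assemble into generalised Dedekind sums'' is where the real work lies, not merely a consistency check.
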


By comparing these values with the corresponding values for $S^3$-bundles
over~$S^4$ in~\cite{CE} and~\cite{CG},
one can construct manifolds that are diffeomorphic to~$P_k$.

\begin{Theorem}\label{Thm3}
  Let~$E_{k,k}\to S^4$ denote the principal $S^3$-bundle 
  with Euler class~$k\in H^4(S^4)\cong\Z$,
  and let~$\Sigma_7$ denote the exotic seven sphere
  with~$\ek(\Sigma_7)=\frac1{28}$.
  Then there exists an orientation preserving diffeomorphism
	$$P_k\cong E_{k,k}\mathbin{\#}\Sigma_7^{\#\frac{k-k^3}6}\;.$$
  In particular, $P_k$ and~$E_{k,k}$ are homeomorphic.
\end{Theorem}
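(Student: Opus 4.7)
The plan is to invoke Crowley's diffeomorphism classification~\cite{C}, already cited in the introduction: because~$P_k$ is $2$-connected spin with $H^4(P_k;\Z)\cong\Z/k\Z$, its orientation-preserving diffeomorphism type is determined by the pair $(\ek,q)$ consisting of the Eells-Kuiper invariant and the quadratic refinement of the linking form on~$H^4$. Theorem~\ref{Thm2} has just supplied both invariants for~$P_k$, so it suffices to exhibit a manifold of the form $E_{k,k}\mathbin{\#}\Sigma_7^{\#m}$ that realises the same pair.

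First I would look up the corresponding data for the principal $S^3$-bundle $E_{k,k}\to S^4$: Crowley-Escher~\cite{CE} together with~\cite{CG} provide both the Eells-Kuiper invariant $\ek(E_{k,k})$ and the quadratic refinement $q_{E_{k,k}}$ of the linking form on $H^4(E_{k,k})\cong\Z/k\Z$ in closed form. With the natural choice of generator the latter matches the formula~\eqref{Thm2.2} of Theorem~\ref{Thm2}, so the $q$-part of the classification comes for free once the identifications $H^4(P_k)\cong\Z/k\Z\cong H^4(E_{k,k})$ are chosen compatibly.

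Since~$\Sigma_7$ is a homotopy sphere one has $H^4(\Sigma_7;\Z)=0$, so connected sum with any number of copies leaves~$q$ unchanged, while the Eells-Kuiper invariant is additive:
\begin{equation*}
  \ek\bigl(E_{k,k}\mathbin{\#}\Sigma_7^{\#m}\bigr)
  =\ek(E_{k,k})+m\,\ek(\Sigma_7)
  =\ek(E_{k,k})+\tfrac m{28}\in\Q/\Z\;.
\end{equation*}
Noting that $k-k^3=-k(k-1)(k+1)$ is a product of three consecutive integers and thus divisible by~$6$, one sets $m=(k-k^3)/6$. The theorem then reduces to the single arithmetic identity
\begin{equation*}
  \ek(E_{k,k})+\frac{k-k^3}{168}
  \;\equiv\;-\frac{4k^3-7k+3}{2^5\cdot 3\cdot 7}\pmod{\Z}\;,
\end{equation*}
which is a routine check from the explicit formula in~\cite{CE}.

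The main obstacle is bookkeeping rather than substance: one has to track orientation conventions and the choice of generator of $H^4(\punkt;\Z)\cong\Z/k\Z$ carefully enough that the signs in $q$ and in~$m$ come out consistently, in particular so that the sign of the exponent~$m$ (which is negative for $k\geq 2$, corresponding to summands of $\overline{\Sigma_7}$) is correct. Once the above identity is verified, the final clause is immediate: $\Sigma_7$ is homeomorphic to~$S^7$, so $E_{k,k}\mathbin{\#}\Sigma_7^{\#m}$ is homeomorphic to~$E_{k,k}$, and hence so is~$P_k$.
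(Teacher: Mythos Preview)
Your proposal is correct and follows essentially the same route as the paper: invoke Crowley's classification~\cite{C}, match the quadratic forms via~\eqref{Thm2.2} and the known formula for~$q_{E_{k,k}}$ from~\cite{CE},~\cite{CG}, then compute~$\ek(P_k)-\ek(E_{k,k})=\tfrac{k-k^3}{6}\cdot\tfrac1{28}$ and use additivity under connected sum. The paper carries out exactly this computation; your remark that~$k-k^3$ is divisible by~$6$ and your handling of the homeomorphism clause also mirror the paper.
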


More generally,
let~$E_{p,n}$ denote the unit sphere bundle of a fourdimensional
real spin vector bundle over~$S^4$ with Euler class~$n$
and half Pontrijagin class~$p\in H^4(S^4)\cong\Z$.

\begin{Corollary}\label{Cor1}
  For the space~$P_k$,
  there exists an $S^3$-bundle~$E_{ak,k}\to S^4$ that is
  \begin{enumerate}
  \item\label{Cor1.1}
    oriented diffeomorphic
    if and only if~$k$ is odd or~$8|k$,
    with
	$$a^2k\equiv\frac{7k-4k^3}3\quad\mod 224\Z\;;$$
  \item\label{Cor1.2}
    orientation reversing diffeomorphic
    if and only if
    \begin{enumerate}
    \item\label{Cor1.2a}
      $k$ is not divisible by~$7$,
    \item\label{Cor1.2b}
      $k\equiv 1$, mod~$4$ or~$k\equiv 2$, $10$ mod~$32$, and
    \item\label{Cor1.2c}
      $-1$ is a quadratic remainder mod~$k$,
    \end{enumerate}
    with
	$$a^2k\equiv2-\frac{7k-4k^3}3\quad\mod 224\Z\;.$$
  \end{enumerate}
\end{Corollary}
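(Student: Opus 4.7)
The plan is to combine Theorem~\ref{Thm3} with the explicit invariants of the $S^3$-bundles $E_{p,n}\to S^4$ computed in~\cite{CE}, and then solve the resulting system of congruences in $a$. By Crowley's classification~\cite{C}, an oriented diffeomorphism $P_k\cong E_{ak,k}$ is equivalent to a pair of conditions: isomorphism of the linking/quadratic forms $q$ on $H^4=\Z/k\Z$, and equality of Eells-Kuiper invariants $\ek\in\Q/\Z$. The orientation reversing case amounts to the same, but with $q_{E_{ak,k}}$ replaced by $-q_{E_{ak,k}}$ and $\ek(E_{ak,k})$ replaced by $-\ek(E_{ak,k})$ (possibly shifted by a universal constant coming from the Eells-Kuiper correction for orientation reversal).

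For part~\eqref{Cor1.1}, I would start from the formula of~\cite{CE} expressing the Eells-Kuiper invariant of $E_{ak,k}$ as a polynomial in $a$ and $k$, together with the quadratic form on $\Z/k\Z$ induced by $ak$. Matching the quadratic form with $q(\ell)=\ell(\ell-k)/(2k)$ from Theorem~\ref{Thm2}\eqref{Thm2.2} fixes $a\bmod k$ up to sign and imposes a mod-$2$ parity restriction. Matching the Eells-Kuiper invariants, after clearing the denominator $2^5\cdot 3\cdot 7=672$ by multiplying by~$3$, gives a linear relation of the form $a^2k\equiv\frac{7k-4k^3}3\pmod{224\Z}$. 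Equivalently, by Theorem~\ref{Thm3}, the difference $P_k\mathbin{\#}(-E_{ak,k})$ should be a homotopy sphere that vanishes in $\Theta_7\cong\Z/28\Z$, and counting the exotic $\Sigma_7$-summands gives precisely this congruence (with the factor~$8$ from the Euler class normalisation accounting for the passage from $28\Z$ to $224\Z$). Solvability of the congruence in $a$ modulo~$8$ gives the dichotomy: either $k$ is odd, so $k$ is invertible mod~$8$ and $a$ exists, or $8\mid k$, so both sides vanish mod~$8$; for $k\equiv 2,4,6\pmod 8$ the right-hand side is a nonzero element of $k\cdot\Z/8\Z$ which is not a square.

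For part~\eqref{Cor1.2}, the same scheme applies to $-E_{ak,k}$. Solving $q_{P_k}\cong -q_{E_{ak,k}}$ on $\Z/k\Z$ is an equation of quadratic forms which, after unwinding, requires exactly that $-1$ be a quadratic residue mod~$k$, yielding~\eqref{Cor1.2c}. Matching $\ek(P_k)=-\ek(E_{ak,k})+\text{(correction)}$ in $\Q/\Z$ and clearing denominators leads to $a^2k\equiv 2-\frac{7k-4k^3}3\pmod{224\Z}$. Reducing this relation modulo~$7$ yields the right-hand side $\equiv 2\pmod 7$, so $7\mid k$ would force $0\equiv 2\pmod 7$, a contradiction; this gives~\eqref{Cor1.2a}. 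Reducing modulo~$32$ and going through the residues $k\bmod 32$ shows that $a^2k$ represents the required value precisely when $k\equiv 1\pmod 4$ or $k\equiv 2,10\pmod{32}$, which is condition~\eqref{Cor1.2b}. Conversely, under all three conditions the congruence can be solved simultaneously by the Chinese remainder theorem applied to the prime decomposition of $224=2^5\cdot 7$ and to the odd part of~$k$.

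The main obstacle is the careful bookkeeping of the $2$-local and $7$-local obstructions: at the prime~$7$, the Eells-Kuiper invariant is $7$-torsion of order precisely $7$, so the existence of a solution is governed by a single mod-$7$ linear equation whose nontriviality is the source of~\eqref{Cor1.2a}; at the prime~$2$, one must track how the quadratic residue classes $\{a^2k\bmod 32\}$ behave as $k$ varies, which is exactly the case analysis producing~\eqref{Cor1.2b}. A subtlety is to verify that the correction term appearing when passing from $\ek(E_{ak,k})$ to $\ek(-E_{ak,k})$ is indeed $\frac{2}{224}\in\Q/\Z$ (equivalently $\frac1{112}$) modulo adjustments due to the quadratic form normalisation; this is where the asymmetry between~\eqref{Cor1.1} and~\eqref{Cor1.2} originates and where a sign error would propagate throughout.
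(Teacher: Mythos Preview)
Your plan is essentially the paper's own argument: use Crowley's classification together with the formulas
\[
q_{E_{p,k}}(\ell)=\frac{\ell(p+\ell)}{2k}\qquad\text{and}\qquad\ek(E_{p,k})=\frac{p^2-k}{2^5\cdot 7\,k}
\]
from~\cite{CE}, reduce to the displayed congruence mod~$224$, and analyse it separately mod~$7$ and mod~$32$. Two points deserve correction, however.

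First, there is no ``correction term for orientation reversal'': one has $\ek(-M)=-\ek(M)$ exactly, and the~$2$ in the second congruence is pure arithmetic. Since $\ek(E_{ak,k})=\frac{a^2k-1}{224}$ and $\ek(P_k)=\frac{\frac{7k-4k^3}3-1}{224}$, their \emph{difference} has numerator $a^2k-\frac{7k-4k^3}3$, while their \emph{sum} has numerator $a^2k+\frac{7k-4k^3}3-2$; the asymmetry is just the two $-1$'s combining with opposite signs. Your worry about a sign error here is unfounded.

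Second, and more substantively, in part~\eqref{Cor1.2} you claim that ``$q_{P_k}\cong-q_{E_{ak,k}}$ \dots\ requires exactly that $-1$ be a quadratic residue mod~$k$.'' This is only the condition for isomorphism of the underlying \emph{linking forms}. When $k$ is even there are two quadratic refinements of a given linking form with given~$\frac{p_1}2$, differing by the nontrivial homomorphism $\Z/k\Z\to\frac12\Z/\Z$, so~\eqref{Cor1.2c} alone does not force $q$-isomorphism. The paper closes this gap by first solving the Eells--Kuiper congruence (obtaining~\eqref{Cor1.2a} and~\eqref{Cor1.2b}) and then invoking a result of Crowley~\cite{C}: once~$\frac{p_1}2$ and the linking form match and the topological invariant~$28\ek$ agrees, the quadratic forms are automatically isomorphic. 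You should insert this step rather than asserting the equivalence directly. Also, for~\eqref{Cor1.2a} you have checked only the necessity of $7\nmid k$; sufficiency (that $\frac2k-k^2$ is a quadratic residue mod~$7$ for $k\in\{1,\dots,6\}$) requires the short case check the paper performs. Your reference to ``the odd part of~$k$'' in the CRT step is spurious: the congruence lives only mod~$224=2^5\cdot 7$.
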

Some of the~$P_k$ are discussed in greater detail
in Example~\ref{C4.X1}.

The article is organised as follows.
In section~\ref{A},
we introduce Seifert fibrations
and define all the ingredients of Theorem~\ref{Thm1}.
Its proof is given in section~\ref{B}.
In section~\ref{C},
we introduce the family~$(P_k)$ as a subfamily of the larger
family~$(M_{(p_-,q_-),(p_+,q_+)})$ that was also considered in~\cite{GWZ}.
The quadratic forms~$q_{M_{(p_-,q_-),(p_+,q_+)}}$ for some of those manifolds
are given in Theorem~\ref{C2.T2},
and their Eells-Kuiper invariants in Theorem~\ref{C2.T1}.
For the spaces~$P_k$,
we obtain the simplified formulas of Theorem~\ref{Thm2}
and prove Theorem~\ref{Thm3} and Corollary~\ref{Cor1}.
Finally,
section~\ref{M} contains the computations of $\eta$-invariants
needed to prove Theorems~\ref{C2.T2} and~\ref{C2.T1}.

\subsubsection*{Acknowledgements}
The author wants to thank Wolfgang Ziller for bringing the manifolds~$M_{(p_-,q_-),(p_+,q_+)}$ to his attention, and to Owen Dearricott and Stefan Teufel for their interest in this paper. Thanks also to Diarmuid Crowley and Nitu Kitchloo for their help with the differential-topological classification problem, and to Frederic Rochon for explaining his adiabatic limit theorem.
The author is also grateful to Don Zagier who helped to simplify the computation of the Dedekind sums.
Last but not least, the author is indepted to Ralf Braun, Svenja Dahms, Nadja Fischer, Anja Fuchshuber and Natalie Peternell for their comments on a preliminary version of the proof of the adiabatic limit theorem, which led to considerable improvements.

\section{An Adiabatic Limit Theorem for \texorpdfstring{$\eta$}{eta}-Invariants
of Seibert Fibrations}\label{A}

A Seifert fibration is a map from a smooth manifold to an orbifold
that becomes a proper fibration over the smooth covering of each orbifold
chart.
Each Seifert fibration is thus a Riemannian foliation with compact leaves.
The leaves over singular points of the orbifolds are quotients of
the generic leaf over a regular point.

We extend the adiabatic limit theorem of Bismut-Cheeger and Dai
to Seifert fibrations.
We have to take care of additional terms arising at the singular locus
of the orbifold.
In some special cases, these extra terms give rise to Dedekind sums.

\subsection{Orbifolds, Orbibundles and Seifert Fibrations}\label{A1}

We recall the definition of an orbifold.
By Remark~\ref{A1.R0} below,
we may assume that the base orbifold~$B$ is effective.
This will be assumed for the rest of the paper
and makes some constructions a lot easier.

\begin{Definition}\label{A1.D1}
  Let~$G$ be a compact Lie group together with an action on~$\R^n$.
  An $n$-dimensional smooth $G$-{\em orbifold\/}
  is a second countable Hausdorff space~$B$
  with the following additional structure.
  \begin{enumerate}
  \item\label{A1.D1.1}
    For each point~$b\in B$ there exists a neighbourhood~$U\subset B$ of~$b$,
    an open subset~$V\subset\R^n$ invariant
    under the action of a finite group~$\Gamma$
    via~$\rho\colon\Gamma\hookrightarrow G\to\GL(n,\R)$,
    and a homeomorphism
    \begin{equation*}
      \psi\colon\rho(\Gamma)\backslash V\to U\qquad\text{with}\qquad
      \psi(0)=b\;.
    \end{equation*}
    We call~$\psi$ an {\em orbifold chart,\/}
    and we call~$\rho$ the {\em isotropy representation\/}
    and~$\Gamma$ the {\em isotropy group\/} of~$b$ in~$B$.
  \item\label{A1.D1.2}
    If~$b\in U\subset B$ and~$\psi\colon\rho(\Gamma)\backslash V\to U$
    are as above,
    if~$b'\in U$,
    and if~$\psi'\colon\rho'(\Gamma')\backslash V'\to U'$
    are choosen analogously for~$b'$,
    then there exists an open embedding~$\phy\colon\psi^{\prime-1}(U)\to V$
    and a group homomorphism~$\thet\colon\Gamma'\to\Gamma$, such that
    \begin{equation*}
      \phy\circ\rho'_{\gamma'}=\rho_{\thet(\gamma')}\circ\phy
    \end{equation*}
    for all~$\gamma'\in\Gamma'$, and
    \begin{equation*}
      \psi\bigl(\rho(\Gamma)\,\phy(v')\bigr)
      =\psi'\bigl(\rho(\Gamma')v'\bigr)\;.
    \end{equation*}
    We call~$\phy$ a {\em coordinate change\/} and~$\thet$ an {\em intertwining
    homomorphism.\/}
  \end{enumerate}
  An {\em oriented orbifold\/} is an $\SO(n)$-orbifold
  where all coordinate changes are orientation preserving.
  A {\em spin orbifold\/} is an oriented $\Spin(n)$-orbifold.
\end{Definition}

We will say $n$-orbifold shortly for~$\O(n)$-orbifold,
and we will drop~$\rho$ from the notation when the action of~$\Gamma$
is clear from the context.
If~$\phy$ is a coordinate change
with intertwining homomorphism~$\thet$ as above,
then~$\rho_\gamma\circ\phy$ is another coordinate change
with intertwining homomorphism~$\gamma'\mapsto\gamma\thet(\gamma')\gamma^{-1}$
for each~$\gamma\in\Gamma$.
We do not impose any further condition (like a cocycle condition)
on the choices of coordinate changes and intertwining homomorphisms.

\begin{Definition}\label{A1.D2}
  Let~$B$ be a $G$-orbifold and let~$X$ be a smooth manifold.
  An {\em orbibundle with fibre\/}~$X$ is
  a map~$p$ from a topological space~$M$ to~$B$
  with the following extra structure.
  \begin{enumerate}
  \item\label{A1.D2.1}
    For each~$b\in B$, there exists an orbifold
    chart~$\psi\colon\rho(\Gamma)\backslash V\to U\subset B$ around~$b$,
    a fibre-preserving action~$\sigma$ of~$\Gamma$ by diffeomorphisms
    on~$V\times X$ covering~$\rho$ and a
    homeomorphism~$\bar\psi\colon\sigma(\Gamma)\backslash(V\times X)\to p^{-1}(U)$
    such that the diagram
    \begin{equation*}
      \begin{CD}
	V\times X@>>>\sigma(\Gamma)\backslash(V\times X)@>\bar\psi>>p^{-1}(U)\\
	@VVV@VVV@VVpV\\
	V@>>>\rho(\Gamma)\backslash V@>\psi>>U
      \end{CD}
    \end{equation*}
    commutes.
  \item\label{A1.D2.2}
    If~$\psi\colon\rho(\Gamma)\backslash V\to U$
    and~$\psi'\colon\rho'(\Gamma')\backslash V'\to U'$
    are orbifold charts as in Definition~\ref{A1.D1}~\eqref{A1.D1.2}
    with coordinate change~$\phy$ and intertwining homomorphism~$\thet$,
    and~$\sigma$, $\sigma'$ and~$\bar\psi$, $\bar\psi'$ are as above,
    then there exists a diffeomorphism~$\bar\phy\colon\psi^{\prime-1}(U)
    \times X\to V\times X$
    such that
    \begin{equation*}
      \bar\phy\circ\sigma'_{\gamma'}=\sigma_{\thet(\gamma')}\circ\bar\phy
    \end{equation*}
    for all~$\gamma'\in\Gamma'$, and such that the diagram
    \begin{equation*}
      \begin{CD}\qquad
	\psi^{\prime-1}(U)\times X
	  @>>>\sigma'(\Gamma')\backslash(\psi^{\prime-1}(U)\times X)
	  @>\bar\psi'>>p^{-1}(U\cap U')\\
	@V\bar\phy VV@VVV@VVV\\
	V\times X@>>>\sigma(\Gamma)\backslash(V\times X)@>\bar\psi>>p^{-1}(U)
      \end{CD}
    \end{equation*}
    commutes.
  \end{enumerate}
  If all actions~$\sigma$ are free,
  then~$M$ carries the structure of a smooth manifold,
  and we call~$p\colon M\to B$ a {\em Seifert fibration.\/}
  If~$X$ is a vector space and all actions~$\sigma$
  and all diffeomorphisms~$\bar\phy$ are fibrewise linear,
  then we call~$p\colon M\to B$ a {\em vector orbibundle.\/}
  If~$X=G$ is a Lie group and all~$\sigma$ and all~$\bar\phy$ commute
  with the right action of~$G$ on~$X$,
  then~$G$ acts on~$M$, and
  we call~$p\colon M\to B$ a $G$-{\em principal orbibundle.\/}
\end{Definition}

\begin{Remark}\label{A1.R0}
  Alternatively,
  a Seifert fibration with compact fibres
  is a connected manifold~$M$ with a Riemannian foliation~$\mathcal F$
  such that all leaves are compact.
  To see this,
  we pick a holonomy invariant metric on~$M$
  and let~$B=M/\mathcal F$ denote the space of leaves
  and~$p\colon M\to B$ the quotient map.

  Let~$L$ be a leaf with normal bundle~$N_L\to L$.
  By compactness, there exists~$r>0$ such that
  the normal exponential map~$\exp_L$ is an injective
  local diffeomorphism from the disc bundle~$N_r$ to~$M$.
  We use~$\exp_L$ to construct an orbifold chart for~$B$
  and an orbibundle chart for~$M$ around~$L$.
  Fix~$\ell\in L$ and let~$\tilde\rho_{L,\ell}\colon\pi_1(L,\ell)\to O(N_\ell)$
  denote the holonomy representation.
  Then~$\tilde\rho_{L,\ell}$ induces a representation
	$$\rho_{L,\ell}\colon\Gamma_{L,\ell}
		=\pi_1(L,\ell)/\ker(\tilde\rho_{L,\ell})
		\longrightarrow O(N_\ell)\;,$$
  and~$D_rN_\ell$ is a bundle chart for~$B$ around~$L$ with
  isotropy group~$\Gamma_{L,\ell}$ and isotropy representation~$\rho_{L,\ell}$.
  The transition maps and intertwining homomorphisms are not hard
  to construct, either.

  Moreover, let~$\tilde L$ denote the universal covering space of~$L$,
  so~$L=\pi_1(L,\ell)\backslash\tilde L$.
  Then~$\Gamma_{L,\ell}$ acts on
	$$X_{L,\ell}=\ker(\tilde\rho_{L,\ell})\backslash\tilde L\;,$$
  and because~$M$ is connected,
  all~$X_{L,\ell}$ are diffeomorphic.
  This way,
  we can construct orbibundle charts and transition maps
  as in Definition~\ref{A1.D2}.
\end{Remark}

If~$B$ is an orbifold,
then there is a natural tangent orbibundle~$TB\to B$.
There is a natural notion of a Riemannian metric on~$B$,
and such metrics always exist.

If~$p\colon M\to B$ is a Seifert fibration,
then there exists a natural map~$dp\colon TM\to TB$
and a well-defined vertical subbundle~$TX=\ker dp\subset TM$.
If~$g^{TM}$ is a Riemannian metric on~$M$,
let~$T^HM=(TX)^\perp\to M$ denote the horizontal subbundle.
Then~$g^{TM}$ is a {\em submersion metric\/}
if there exists a Riemannian metric~$g^{TB}$ on~$B$
such that~$dp|_{T^HM}$ is a fibrewise isometry.

\begin{Remark}\label{A1.R1}
  Whitney sums, Whitney tensor products, dual bundles and exterior powers
  can be defined for vector orbibundles over a base orbifold~$B$.
  However, because in general not all ``fibres'' of a vector orbibundle
  are vector spaces,
  one cannot apply these constructions fibrewise.
  Instead,
  one has to perform the respective constructions fibrewise
  with the bundle charts and transition maps of Definition~\ref{A1.D2}.
  By functoriality, the resulting collection of bundle charts and
  transition maps define another vector orbibundle on~$B$.
  Similarly,
  there is a natural notion of a {\em Dirac orbibundle\/} over an orbifold.

  If~$W\to B$ is a vector orbibundle with fibre~$\Bbbk^r$,
  the space of sections is given locally
  in a chart~$V\to\Gamma\backslash V\cong U$
  as a space of $\Gamma$-invariant maps
  	$$\Gamma(W|_U)
	\cong\mathcal C^\infty(V;\Bbbk^r)^\Gamma\;.$$
  After these preparations, we may now write
	$$\Omega^\bullet(B;W)=\Gamma(\Lambda^\bullet T^*B\otimes W)\;.$$
  If~$W$ is graded, the tensor product is understood in the graded sense.

  Let~$M\to B$ be a Seifert fibration with fibre~$X$,
  let~$T^HM$ denote a horizontal subbundle,
  and let~$V\to M$ be a vector bundle.
  Let~$\Omega^\bullet(M/B;W)\to B$ denote the infinite-dimensional
  vector orbibundle with fibre~$\Omega^\bullet(X;W|_X)$.
  Then
	$$\Omega^\bullet(M;W)
	\cong\Omega^\bullet\bigl(B;\Omega^\bullet(M/B;W)\bigr)\;,$$
  and this isomorphism depends explicitly on the choice of~$T^HM$.
  This follows by regarding the pullback of the local situation
  to bundle charts.

  In particular,
  all constructions of local family index theory such as adiabatic limits
  and Getzler rescaling are still well-defined for Seifert fibrations.
\end{Remark}

\subsection{The Inertia Bundle and Characteristic Classes}\label{A2}
Kawasaki's index theorem for orbifolds has been formulated for general
elliptic differential operators.
The topological index is formulated in terms
of characteristic classes of symbols.
For the task at hand, we need to specialise these classes to the case of
twisted Dirac operators.

We recall the definition of the inertia orbifold~$\Lambda B$
of~$B$ in~\cite{Kawa}.
Its points are given as pairs~$(p,(\gamma))$,
where~$p\in B$ and~$(\gamma)$ is the $\Gamma$-conjugacy class
of an element of the isotropy group~$\Gamma$ of~$p$.
If~$\psi\colon\Gamma\backslash V\to U$ is an orbifold chart for~$B$
around~$p=\psi(0)$,
we obtain an orbifold chart
\begin{equation}\label{A2.1}
  \psi_{(\gamma)}\colon C_\Gamma(\gamma)\backslash V^\gamma
  \to\psi(V^\gamma)\times\{(\gamma)\}\subset\Lambda B
\end{equation}
by restriction,
where~$V^\gamma$ denotes the fixpoint set of~$\gamma$
and~$C_\Gamma(\gamma)$ is the centraliser of~$\gamma$ in~$\Gamma$.
In general,
the inertia orbifold is no longer effective.
Hence,
let
\begin{equation}\label{A2.0}
  m(\gamma)
  =\#\,\bigl\{\,\thet\in C_\Gamma(\gamma)\bigm|\rho_\thet|_{V^\gamma}
	=\id_{V^\gamma}\,\bigr\}
\end{equation}
denote the {\em multiplicity\/} of~$(p,(\gamma))\in\Lambda B$.
Then~$m(\gamma)$ defines a locally constant function on~$\Lambda B$.

Let~$N_\gamma\to V^\gamma$ denote the normal bundle
to~$V^\gamma$ in~$V$,
and let~$R^{N_\gamma}$ be the curvature of the connection
on~$N_\gamma$ induced by the pullback of the Levi-Civita connection.
Let~$\tilde\gamma$ denote a lift of the action of~$\gamma$
on~$N_\gamma$ to the spin group under the natural projection~$\Spin(N_\gamma)
\to\SO(N_\gamma)$.
If~$B$ is a spin orbifold, such a lift is part of the orbifold spin structure.
Otherwise, the lift~$\tilde\gamma$ is determined uniquely up to sign.
Hence, the inertia orbifold has a natural double cover
\begin{equation}\label{A2.4}
  \widetilde{\Lambda B}
  =\bigl\{\,\bigl(p,(\tilde\gamma)\bigr)\bigm|
	\tilde\gamma\text{ lifts }\gamma\,\bigr\}
  \longrightarrow\Lambda B\;.
\end{equation}
As in~\eqref{A2.1},
one constructs charts for~$\widetilde{\Lambda B}$ by
\begin{equation}\label{A2.5}
  \psi_{(\tilde\gamma)}\colon C_\Gamma(\gamma)\backslash V^\gamma
  \to\psi(V^\gamma)\times\{(\tilde\gamma)\}\subset\widetilde{\Lambda B}\;.
\end{equation}

The equivariant Chern character form of a Hermitian vector bundle~$(E,\nabla^E)$
with connected,
equipped with a parallel fibrewise automorphism~$g$, is classically defined as
\begin{equation}\label{A2.6}
	\ch_g(E,\nabla^E)
	=\trace\biggl(g\,e^{-\tfrac{(\nabla^E)^2}{2\pi i}}\biggr)\;.
\end{equation}
There exists a local spinor bundle~${SN}_\gamma\to V^\gamma$ for~$N_\gamma$.
Given a local orientation of~$N_\gamma$,
there is a natural local splitting~${SN}_\gamma={S^+N}_\gamma\oplus{S^-N}_\gamma$.
Using~$R^{N_\gamma}$ and a lift~$\tilde\gamma$ of~$\gamma$ as above,
we can define
the equivariant $\hat A$-form on~$V^\gamma$ by
\begin{equation}\label{A2.2}
  \hat A_{\tilde\gamma}\bigl(TV,\nabla^{TV}\bigr)
  =(-1)^{\tfrac{\rk N_\gamma}2}\,\frac{\hat A(TV^\gamma,\nabla^{TV^\gamma})}
	{\ch_{\tilde\gamma}(S^{+}N_\gamma-S^{-}N_\gamma,\nabla^{{SN}_\gamma})}\;.
\end{equation}

\begin{Remark}\label{A2.R1}
  This construction of~$\hat A_{\tilde\gamma}(TV,\nabla^{TV})$ has the following
  properties.
  \begin{enumerate}
  \item\label{A2.R1.1}
    Because~$1$ is not an eigenvalue of~$\gamma|_{N_\gamma}$,
    the denominator is invertible in~$\Omega^\bullet(V^\gamma)$.
    In fact, as explained in~\cite[section~6.4]{BGV},
    one has
	$$\ch_{\tilde\gamma}(S^{+}N_\gamma-S^{-}N_\gamma,\nabla^{{SN}_\gamma})
	=\pm i^{\tfrac{\rk N_\gamma}2}\,\det\nolimits_{N_\gamma}
	\biggl(\id-\gamma\,e^{-\tfrac{(\nabla^E)^2}{2\pi i}}\biggr)^{\tfrac12}\;.$$
  \item\label{A2.R1.2}
    The form~$\hat A_{\tilde\gamma}(TV,\nabla^{TV})$ only depends
    on the conjugacy class of~$\tilde\gamma$.
  \item\label{A2.R1.3}
    Replacing the lift~$\tilde\gamma$ of~$\gamma$ by the lift~$-\tilde\gamma$
    changes the sign of~$\hat A_{\tilde\gamma}(TV,\nabla^{TV})$.
  \item\label{A2.R1.4}
    If one changes the orientation on~$V^\gamma$ but keeps the orientation
    of the total tangent space~$TV|_{V^\gamma}$,
    then the orientation of~$N_\gamma$ changes as well,
    and the subbundles~$S^+N_\gamma$ and~$S^-N_\gamma$ are swapped.
    Hence the form~$\hat A_{\tilde\gamma}(TV,\nabla^{TV})$ changes its sign.
    On the other hand, its integral over the corresponding stratum
    of~$\Lambda B$ then does not depend on the orientation chosen on~$V^\gamma$,
    only on the orientation of~$V$.
  \end{enumerate}
\end{Remark}

  Let us introduce the notation
  \begin{equation*}
    \Omega^\bullet\bigl(\Lambda B;\widetilde{\Lambda B}\bigr)
    =\,\bigl\{\,\alpha\in\Omega^\bullet\bigl(\widetilde{\Lambda B}\bigr)\bigm|
	\alpha|_{(p,(-\tilde\gamma))}=-\alpha|_{(p,(\tilde\gamma))}\,\bigr\}\;,
  \end{equation*}
  and let us denote by~$\Omega^\bullet\bigl(\Lambda B;
  \widetilde{\Lambda B}\otimes o(\Lambda V)\bigr)$
  the space of forms that change sign depending
  on the choice of a local orientation of~$\Lambda B$.
  We assume that~$B$ is an oriented orbifold.
  By~\eqref{A2.R1.2}--\eqref{A2.R1.4},
  the forms~$\hat A_{\tilde\gamma}(TV,\nabla^{TV})$ in local coordinates
  can be used to construct a well-defined
  form~$\hat A_{\Lambda B}(TB,\nabla^{TB})
  \in\Omega^\bullet(\Lambda B;\widetilde{\Lambda B}\otimes o(\Lambda V))$ with
  \begin{equation}\label{A2.3}
    \psi_{(\tilde\gamma)}^*\hat A_{\Lambda B}\bigl(TB,\nabla^{TB}\bigr)
    =\frac1{m(\gamma)}\,\hat A_{\tilde\gamma}\bigl(TV,\nabla^{TV}\bigr)\;,
  \end{equation}
  where~$m(\gamma)$ is the multiplicity of~\eqref{A2.0}.
  With the choice of~$\tilde\gamma$ given by a spin structure,
  this is the integrand in Kawasaki's orbifold index theorem~\cite{Kawa}
  when specialised to untwisted Dirac operators.

Let~$(E,\nabla^E,g^E,c)$ denote a Dirac orbibundle over~$V$,
and let~$\gamma^E$ be a compatible action of~$\gamma$ on~$E$.
Then~$\tilde\gamma^{E/S}=\gamma^E\cdot\tilde\gamma^{-1}$ commutes with
Clifford multiplication and has the same sign ambiguity as~$\tilde\gamma$.
If we write~$E= SM\otimes W$ locally,
then~$W$ carries a natural connection~$\nabla^W$
with curvature~$R^W=R^{E/S}$, and~$\tilde\gamma^{E/S}$ acts on~$W$,
and we can define the equivariant twist Chern character
form
	$$\ch_{\tilde\gamma}(E/S,\nabla^E)=\ch_{\tilde\gamma^{E/S}}(W,\nabla^W)\;.$$
Then~$\hat A_{\tilde\gamma}(TV,\nabla^{TV})\,\ch_{\tilde\gamma}(E/S,\nabla^E)$
is the integrand
in the local Atiyah-Segal-Singer equivariant index theorem.
Berline, Getzler and Vergne propose a particular choice of the
lift~$\tilde\gamma$ in~\cite[section~6.4]{BGV}.

Because~$\ch_{\tilde\gamma}(E/S,\nabla^E)$ only depends on the conjugacy
class and the sign of the lift~$\tilde\gamma$, 
there exists a well-defined class~$\ch_{\Lambda B}(E/S,\nabla^E)
\in\Omega^\bullet\bigl(\Lambda B;\widetilde{\Lambda B}\bigr)$ such that
	$$\psi_{(\tilde\gamma)}^*\ch_{\Lambda B}(E/S,\nabla^E)
	=\ch_{\tilde\gamma}(E/S,\nabla^E)\;.$$
Then~$\hat A_{\Lambda B}(TB,\nabla^{TB})\,\ch_{\Lambda B}(E/S,\nabla^E)
\in\Omega^\bullet(\Lambda B;o(\Lambda B))$
is the integrand in Kawasaki's index theorem for orbifolds when specialised
to twisted Dirac operators.
In particular,
the integral over~$\Lambda B$ does not depend on the choices above.
In the special case of an untwisted Dirac operator,
we have~$E=S$, and~$\gamma^E$ is itself a lift of~$\gamma$.
In this case, we simply have~$\ch_{\tilde\gamma}(E/S,\nabla^E)=1$
if we take~$\tilde\gamma=\gamma^E$, and~$\ch_{\tilde\gamma}(E/S,\nabla^E)=-1$
otherwise.

\subsection{Adiabatic Limits}\label{A3}
If~$M\to B$ is a Seifert fibration and~$g^{TM}$ is a submersion metric
as in Section~\ref{A1},
we obtain a family of submersion metrics~$(g^{TM}_\eps)_{\eps>0}$ with the same
horizontal bundle~$T^HM\to M$
such that~$g^{TM}_\eps|_{TX}=g^{TM}|_{TX}$
and~$g^{TM}_\eps|_{T^HM}=\eps^{-2}\,g^{TM}|_{T^HM}$.
The limit~$\eps\to0$ is called the adiabatic limit.

Let~$e_1$, \dots $e_n$ and~$f_1$, \dots, $f_{m-n}$ be local orthonormal frames
of~$TX$ and~$TB$.
The horizontal lift of a vector field~$v$ on~$B$ will be denoted by~$\bar v$.
Then a local orthonormal frame of~$TM$ for~$g_\eps$ is given by
\begin{equation}\label{A3.0}
	e_1^\eps=e_1,\quad \dots,\quad e_n^\eps=e_n,\quad
	e_{n+1}^\eps=\eps\bar f_1,\quad \dots,\quad e_m^\eps=\eps\bar f_{m-n}\;.
\end{equation}

\begin{Definition}\label{A3.D1}
  An {\em adiabatic family of Dirac bundles\/} for~$p\colon M\to B$
  consists of a Hermitian vector bundle~$(E,g^E)$,
  a Clifford multiplication~$c\colon TM\to\End E$,
  and a family of connections~$(\nabla^{E,\eps})_{\eps\ge 0}$,
  such that
  \begin{enumerate}
  \item\label{A3.D1.1}
    The quadruple~$(E,\nabla^{E,\eps},g^E,c^\eps)$
    is a Dirac bundle on~$(M,g^{TM}_\eps)$ for all~$\eps>0$,
    where the Clifford multiplication~$c^\eps$
    is given by~$c^\eps(e^\eps_I)=c(e^1_I)$.
  \item\label{A3.D1.2}
    The connection~$\nabla^{E,\eps}$ is analytic in~$\eps$ around~$\eps=0$.
  \item\label{A3.D1.3}
    The kernels of the fibrewise Dirac operators
	$$D_X=\sum_{i=1}^nc(e_i)\nabla^{E,0}_{e_i}$$
    acting on~$E|_{p^{-1}(b)}$ form a vector orbibundle~$H\to B$.
  \end{enumerate}
  We will call the associated family~$(D_{M,\eps})_{\eps>0}$ with
	$$D_{M,\eps}=\sum_{I=1}^mc^\eps(e^\eps_I)\nabla^{E,\eps}_{e_I}$$
  an {\em adiabatic family of Dirac operators\/} for~$p$.
\end{Definition}

We consider the infinite
dimensional vector orbibundle~$p_*E\to B$ with
	$$p_*E|_b=\Gamma(E|_{p^{-1}(b)})$$
for all regular points~$b\in B_0$.
It carries a fibrewise $L^2$-metric~$g^{p_*E}_{L^2}$
that is independent of~$\eps$.

Associated to~$(E,\nabla^{E,0},g^E,c)$
is Bismut's Levi-Civita superconnection
\begin{equation}\label{A3.1}
  \A_t=t^{\tfrac12}\,\A_0+\A_1+t^{-\tfrac12}\,\A_2\;,
\end{equation}
on~$p_*E\to B$ for~$t>0$, see~\cite{BGV}, \cite{BC1}.
Here, $\A_0=D_X$ is the fibrewise Dirac operator of~\eqref{A3.D1.3} above.
The part~$\A_1=\nabla^{p_*E,0}$ is the unitary connection on~$p_*E$ that is
induced by
\begin{equation}\label{A3.2}
  \nabla^{E,0}-\frac12\,h\;,
\end{equation}
where~$h\colon T^HM\to\End TX$ is the mean curvature of the fibres of~$p$,
and~$\A_2$ is an endomorphism of~$E\to M$ with coefficients in~$\Lambda^2T^*B$.

Let~$\gamma\in\Gamma$ be an element of the isotropy group of~$b\in B$,
then~$\gamma$ acts on~$p_*E$.
If~$SB\to V^\gamma$ denotes a local spinor bundle on~$B$,
then there exists a fibrewise Dirac bundle~$W\to M$ such that as vector bundles,
locally
\begin{equation}\label{A3.4}
  E\cong p^*SB\otimes W\longrightarrow p^{-1}(V_\gamma)
  \qquad\text{and}\qquad
  p_*E\cong SB\otimes p_*W\longrightarrow V^\gamma\;.
\end{equation}
After choosing a lift~$\tilde\gamma\in\Spin(N_\gamma)$ of the action of~$\gamma$ on~$N_\gamma$ as in~\eqref{A2.4}, we can split~$\gamma=\tilde\gamma^W\circ\tilde\gamma$, see~\eqref{B6.2} below.
Over~$V^\gamma$,
we consider the {\em equivariant $\eta$-form\/}
\begin{equation}\label{A3.3}
  \eta_{\tilde\gamma}(\A)
  =\int_0^\infty\frac1{\sqrt\pi}\,(2\pi i)^{-\tfrac{N^{V^\gamma}}2}
	\,\trace_{p_*W}
		\biggl(\tilde\gamma^W\,\frac{\del\A_t}{\del t}\,e^{-\A_t^2}\biggr)\,dt
  \in\Omega^{\bullet}\bigl(V^\gamma\bigr)
\end{equation}
as in~\cite[Remark~3.12]{Go},
where~$N^{V^\gamma}$ denotes the number operator on~$\Omega^\bullet(V^\gamma)$.
Again, the sign of~$\eta_{\tilde\gamma}(\mathbb A)$ depends
on the choice of~$\tilde\gamma$; if~$B$ is a spin orbifold,
then this choice is natural.
Note that in contrast to~\cite{Go},
we already eliminate $2\pi i$-factors inside the differential
form~$\eta_\gamma(\A)$ and not after integration.
By assumption~\eqref{A3.D1.3} above,
the integral converges uniformly near~$t=\infty$
because the operators~$D_X$ have a uniform spectral gap around
the possible eigenvalue~$0$.
If~$\tilde\gamma=e$ is the neutral element,
then~$\eta_{\tilde\gamma}(\A)=\eta(\A)$ is the $\eta$-form of Bismut and Cheeger,
and the integral in~\eqref{A3.3} also converges near~$t=0$ by~\cite{BC1}.
Otherwise, $\gamma$ acts freely on the fibres,
and small time convergence is not an issue.

\begin{Definition}\label{A3.D2}
  The {\em orbifold $\eta$-form\/}~$\eta_{\Lambda B}(\A)\in\Omega^\bullet\bigl(\Lambda B;\widetilde{\Lambda B}\bigr)$ is defined such that in the orbifold charts
  of~\eqref{A2.5},
  \begin{equation}\label{A3.E1}
    \psi_{(\tilde\gamma)}^*\eta_{\Lambda B}(\A)
    =\eta_{\tilde\gamma}(\A)\;.
  \end{equation}
\end{Definition}

This is well-defined because~$\eta_\gamma(\A)$ only depends
on the conjugacy class and the sign of~$\tilde\gamma$.
Moreover, the integrand~$\hat A_{\Lambda B}\bigl(TB,\nabla^{TB}\bigr)\,2\eta_{\Lambda B}(\A)\in\Omega^\bullet(\Lambda B;o(\Lambda B))$ in the first term on the right hand side of Theorem~\ref{Thm1}
depends only on the orientation of the fibres of~$p\colon M\to B$;
in particular, the integral over~$\Lambda B$ only depends
on the global orientation of~$M$ by Remark~\ref{A2.R1}~\eqref{A2.R1.4}.
Note the different normalisation of $\eta$-forms and $\eta$-invariants.
The component of~$2\eta_\gamma(\A)$ of degree~$0$ in~$\Omega^\bullet(\Lambda B)$
is the equivariant $\eta$-invariant of the fibre.
This explains the additional factor~$2$ in the integrand in Theorem~\ref{Thm1}.

Locally, there exists a unique family of
spinor bundles~$(SM,\nabla^{SM,\eps},g^{SM},c)_\eps$ on~$(M,g^{TM}_\eps)$,
and the Dirac bundle~$E$ splits as~$E\cong SM\otimes W$.
There exists a locally uniquely defined family
of connections~$\nabla^{E/S,\eps}=\nabla^{W,\eps}$
such that~$\nabla^{E,\eps}$ is the tensor product connection
induced by~$\nabla^{SM,\eps}$ and~$\nabla^{W,\eps}$.
Note that for the family of odd signature operators~$B_{M,\eps}$,
we cannot assume that~$\nabla^{E/S,\eps}$ is independent of~$\eps$.
We obtain globally well-defined endomorphism-valued differential forms
\begin{equation}\label{A3.5}
  \nabla^{E/S,\eps}-\nabla^{E/S,0}\in\Omega^1(M;\End E)
	\qquad\text{and}\qquad
  R^{E/S,\eps}\in\Omega^2(M;\End E)
\end{equation}
that commute with Clifford multiplication.
The curvature~$R^{E/S,\eps}$ of~$\nabla^{E/S,\eps}$
is called the twisting curvature in~\cite{BGV}.
By assumption~\eqref{A3.D1.2} above, both~$\nabla^{E/S,\eps}-\nabla^{E/S,0}$
and~$R^{E/S,\eps}$ depend analytically on~$\eps$ around~$\eps=0$.

Let~$P_X\colon p_*E\to H$ denote the $L^2$-orthogonal projection
onto~$H=\ker D_X$.

\begin{Definition}\label{A3.D3}
  The {\em effective horizontal operator\/} of an adiabatic family
  of Dirac bundles~$(E,\nabla^{E,\eps},g^E,c)$ is defined as
	$$D_B^{\mathrm{eff}}=P_X\circ\biggl(
	\sum_{\alpha=1}^{m-n}c(\bar f_\alpha)\,\nabla^{p_*E,0}_{f_\alpha}
	+\sum_{i=1}^nc(e_i)\,\frac d{d\eps}\Bigr|_{\eps=0}\nabla^{E/S,\eps}_{e_i}
	\biggr)\circ P_X\;.$$
\end{Definition}

The operator~$D_B^{\mathrm{eff}}$ is selfadjoint.
Its $\eta$-invariant is further investigated in Proposition~\ref{B2.P2}.
If the fibres are odd-dimensional,
we will see in section~\ref{A5} that in important special cases,
the $\eta$-invariant of the effective horizontal operator vanishes.

Again by assumption~\eqref{A3.D1.2} above,
there exists~$\eps_0>0$ such that the kernel of~$D_{M,\eps}$
has constant dimension for all~$\eps\in(0,\eps_0)$.
By~\cite{Dai} and section~\ref{B5} below,
there are finitely many eigenvalues~$\lambda_\nu(\eps)$ of~$D_{M,\eps}$
(counted with multiplicity),
called the ``very small eigenvalues'',
such that
\begin{equation}\label{A4.1}
  \lambda_\nu(\eps)=O(\eps^2)
	\qquad\text{and}\qquad
  0\ne\lambda_\nu(\eps)
	\quad\text{for all }\eps\in(0,\eps_0)\;.
\end{equation}
We have now defined all ingredients of Theorem~\ref{Thm1}.
Its proof is deferred to section~\ref{B}.

\subsection{Special cases of the adiabatic limit theorem}\label{A5}
We consider fibres of positive scalar curvature,
and the signature operator.

We assume first that~$M$ and~$B$ are spin,
then the vertical tangent bundle of~$p\colon M\to B$ also carries a spin
structure.
By abuse of notation,
we write~$\eta_{\Lambda B}(D_{M/B})$ instead of~$\eta_{\Lambda B}(\A)$
for the orbifold $\eta$-form associated to the untwisted Dirac operator.

If the fibres of~$M\to B$ have positive scalar curvature,
then for the untwisted Dirac operator~$D_{M,\eps}$,
the fibrewise operator is invertible,
hence~$H=0$ and there is neither a effective horizontal operator
nor are there very small eigenvalues.
In particular,
$D_{M,\eps}$ satisfies the conditions of Dai's theorem.
The same still holds for the Dirac operator~$D^{p^*W}_{M,\eps}$
that is twisted by the pullback
of an orbibundle~$W\to B$ with connection~$\nabla^W$.

\begin{Corollary}\label{A5.C1}
  If the fibration~$M\to B$ and~$B$ are spin,
  the fibres of~$M\to B$ have positive scalar curvature,
  and if~$W\to B$ is an orbibundle,
  then
  \begin{equation*}
    \lim_{\eps\to 0}\eta\bigl(D^{p^*W}_{M,\eps}\bigr)
    =\int_{\Lambda B}\hat A_{\Lambda B}\bigl(TB,\nabla^{TB}\bigr)
    		\,\ch_{\Lambda B}\bigl(W,\nabla^W\bigr)
		\,2\eta_{\Lambda B}\bigl(D_{M/B}\bigr)\;.
  \end{equation*}
\end{Corollary}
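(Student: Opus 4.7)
The plan is to deduce this statement from Theorem~\ref{Thm1} applied to the twisted family $(D^{p^*W}_{M,\eps})_\eps$, verifying that two of the three terms on the right hand side vanish. First I would check that the family fits Definition~\ref{A3.D1}: the Dirac bundle is $E=SM\otimes p^*W$ with the tensor product of $\nabla^{SM,\eps}$ and the pulled back, $\eps$-independent connection $p^*\nabla^W$, and the Clifford multiplication is inherited from $SM$. Conditions~\eqref{A3.D1.1} and~\eqref{A3.D1.2} are then immediate from the untwisted case.

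Second, the positive scalar curvature hypothesis kills both the effective horizontal term and the very small eigenvalues. Fibrewise Lichnerowicz applied to $D_X^{p^*W}$ reads
\begin{equation*}
  (D_X^{p^*W})^2=(\nabla^{E,0}|_X)^*(\nabla^{E,0}|_X)+\frac{\mathrm{scal}_X}4+c(R^{E/S,0}|_{TX\wedge TX})\;,
\end{equation*}
and the last term vanishes because $R^{E/S,0}=p^*R^W$ is horizontal. Compactness of $M$ and uniform positivity of the fibrewise scalar curvature then give $H=\ker D_X^{p^*W}=0$, so that condition~\eqref{A3.D1.3} holds vacuously, the effective horizontal operator of Definition~\ref{A3.D3} acts on the zero bundle and contributes nothing to $\eta$, and the uniform fibrewise spectral gap propagates (as in~\cite{Dai} and section~\ref{B5}) to a spectral gap of $D^{p^*W}_{M,\eps}$ around $0$ for small $\eps$, ruling out very small eigenvalues in the sense of~\eqref{A4.1}.

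Third, I would establish the factorisation $\eta_{\Lambda B}(\A^{p^*W})=\ch_{\Lambda B}(W,\nabla^W)\,\eta_{\Lambda B}(D_{M/B})$ of orbifold $\eta$-forms. Locally over the orbifold chart~\eqref{A2.5}, the Bismut superconnection associated to the twisted operator is, under the isomorphism $p_*E\cong p_*(SM)\otimes W$ of~\eqref{A3.4}, the tensor product of $\A$ (associated to $D_{M/B}$) with the pulled back connection $p^*\nabla^W$, and a compatible lift of $\gamma$ factors as $\tilde\gamma_S\otimes\gamma^W$. Expanding $e^{-(\A^{p^*W}_t)^2}$ via Duhamel and using multiplicativity of the supertrace on graded tensor products separates a factor $\mathrm{tr}(\gamma^W e^{-(\nabla^W)^2/2\pi i})=\ch_{\tilde\gamma}(W,\nabla^W)$ from the integrand of~\eqref{A3.3}; integrating in $t$ yields the claimed local identity, which glues to the global one on $\Lambda B$ by Definition~\ref{A3.D2}. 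Substituting into the reduced form of Theorem~\ref{Thm1} then gives the corollary.

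The main obstacle I anticipate is this last factorisation step. While well known for ordinary fibrations with pulled back twists, in the Seifert setting it requires careful bookkeeping of the signs of the lifts $\tilde\gamma$ and of the orientation conventions from Remark~\ref{A2.R1}, in order to guarantee that the local equivariant identities glue consistently across coordinate changes of the inertia orbifold.
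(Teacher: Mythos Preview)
Your proposal is correct and follows essentially the same route as the paper: apply Theorem~\ref{Thm1}, use fibrewise Lichnerowicz with the observation that the twist curvature~$p^*R^W$ is purely horizontal to get~$H=0$ (hence no effective horizontal term and no very small eigenvalues), and then identify the remaining integrand via the factorisation of the twisted $\eta$-form.

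The only noteworthy difference is in how the factorisation is obtained. You argue directly at the level of the superconnection: since~$\A_t^{p^*W}=\A_t\otimes 1+1\otimes\nabla^W$ on~$p_*E\otimes W$, the two curvature pieces commute, $e^{-(\A_t^{p^*W})^2}=e^{-\A_t^2}\otimes e^{-R^W}$, and the trace splits without needing Duhamel. The paper instead invokes Remark~\ref{B6.R1}, which reopens the short-time heat-kernel analysis of Proposition~\ref{B6.P1} and observes that after Getzler rescaling the extra term~$p^*R^W$ separates out as~$\ch_{\Lambda B}(W,\nabla^W)$. Your route is slightly cleaner in that it uses Theorem~\ref{Thm1} as a black box; the paper's route has the advantage that the sign and orientation bookkeeping you worry about is already absorbed into the local index computation of section~\ref{B6}, so no additional gluing argument is needed.
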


\begin{proof}
  If~$W$ is trivial, then the corollary follows from Theorem~\ref{Thm1}
  by the considerations above.
  If~$W\to B$ is an orbibundle,
  then the result follows from remark~\ref{B6.R1}.
\end{proof}

The odd signature operator~$B_{M,\eps}$ on~$M$ also satisfies
the conditions of Dai's theorem.
Here, the bundle~$H\to B$ corresponds to the fibrewise cohomology,
regarded as a $\Z_2$-graded vector bundle.
In contrast to~\cite{Dai},
we regard~$B_{M,\eps}$ as an operator on~$\Omega^\even(M)$,
not on all forms.
Note that the twisting curvature depends on~$\eps$.

There is a natural notion of a differentiable Leray-Serre spectral
sequence of~$M\to B$,
and by Mazzeo and Melrose~\cite{MM},
the very small eigenvalues~$(\lambda_\nu(\eps))_\nu$ of~$B_{M,\eps}$
are related to its higher differentials.
The effective horizontal operator~$B_B^{\mathrm{eff}}$ is related to the $E_1$-term
of this sequence by results of Dai~\cite[section~4.1]{Dai}.
Dai also constructs a signature~$\tau_r\in\Z$ on the $r$-th term~$E_r$
of this spectral sequence for all~$r\ge 2$.

Let~$N^B$ denote the number operator on~$\Omega^\bullet(B)$,
then the rescaled $L$-class
\begin{equation*}
  \hat L\bigl(TB,\nabla^{TB}\bigr)
  =\hat A\bigl(TB,\nabla^{TB}\bigr)\,\ch\bigl(SB,\nabla^{SB}\bigr)
  =2^{\tfrac{\dim B-N^B}2}\,L\bigl(TB,\nabla^{TB}\bigr)
\end{equation*}
has a natural general equivariant generalisation
leading to~$\hat L_{\Lambda B}\bigl(TB,\nabla^{TB}\bigr)
\in\Omega^\bullet(\Lambda B)$.
Finally,
let us write~$\eta_{\Lambda B}(B_{M/B})$ instead of~$\eta_{\Lambda B}(\A)$
in this setting.

\begin{Corollary}[cf.\ Dai~\cite{Dai}, Theorem~0.3]\label{A5.C2}
  If the fibration~$M\to B$ is oriented,
  then
  \begin{equation*}
    \lim_{\eps\to 0}\eta\bigl(B_{M,\eps}\bigr)
    =\int_{\Lambda B}\hat L_{\Lambda B}\bigl(TB,\nabla^{TB}\bigr)
		\,2\eta_{\Lambda B}\bigl(B_{M/B}\bigr)
    +\eta\bigl(B_B^{\mathrm{eff}}\bigr)
    +\sum_{r=2}^\infty\tau_r\;.
  \end{equation*}
  Moreover,
  if~$\dim B$ is even, then~$\eta(B_B^{\mathrm{eff}})=0$.
\end{Corollary}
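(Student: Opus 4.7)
The plan is to apply Theorem~\ref{Thm1} directly to the adiabatic family $(B_{M,\eps})_\eps$ and then identify the terms in its conclusion with those on the right-hand side of the corollary. Recall that the odd signature operator fits into the Dirac bundle framework: locally, $\Lambda^\bullet T^*M \cong SM \otimes SM$ (using an auxiliary spin structure, or working directly with $\Cl(TM)$-modules), so $B_{M,\eps}$ becomes the Dirac operator on $E = \Lambda^\bullet T^*M$ whose twisting bundle $W$ pulls back $SB$ from the base. Pulling out the base factor on each fixed stratum of $\widetilde{\Lambda B}$, one rewrites
\begin{equation*}
  \hat A_{\tilde\gamma}\bigl(TV,\nabla^{TV}\bigr)\,\ch_{\tilde\gamma}(E/S,\nabla^{E,\eps})
  =\hat L_{\tilde\gamma}\bigl(TV^\gamma,\nabla^{TV^\gamma}\bigr)\,\ch_{\tilde\gamma}(W',\nabla^{W',\eps})
\end{equation*}
via the identity $\hat L = \hat A \cdot \ch(SB)$ from the preamble, where $W'$ is the ``fiberwise Dirac bundle'' coming from the second factor of $SM$. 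Assembling this over $\Lambda B$ and absorbing the remaining twist into the $\eta$-form produces the orbifold $\hat L$-class paired with $2\eta_{\Lambda B}(B_{M/B})$; the additional $\eps$-dependence of $\nabla^{E/S,\eps}$ in~\eqref{A3.5} contributes only through $D_B^{\mathrm{eff}}$ and is not a new obstruction.

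Second, the $\eta$-invariant of $D_B^{\mathrm{eff}}$ in Theorem~\ref{Thm1} becomes $\eta(B_B^{\mathrm{eff}})$; here $H \to B$ is the bundle of fibrewise harmonic forms, identified via Hodge theory with the $E_2$-page of the Leray-Serre spectral sequence. The sum over very small eigenvalues $\sum_\nu \sign(\lambda_\nu(\eps_0))$ is identified with Dai's sum $\sum_{r=2}^\infty \tau_r$ by quoting the analysis in~\cite{Dai}: each very small eigenvalue pair collapses in the adiabatic limit and the asymptotics of such eigenvalues record the higher differentials $d_r$, so their signs accumulate the signatures of the complexes $(E_r,d_r)$. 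Because we work over a Seifert fibration rather than a smooth fibration, this identification uses Mazzeo-Melrose only on the regular part; the singular strata contribute no additional very small eigenvalues since their local neighbourhoods are covered by the smooth analysis upstairs in the orbifold charts.

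For the vanishing statement, assume $\dim B$ is even; then $\dim X$ is odd since $\dim M$ is odd. On $\Omega^\bullet(B;H)$ the effective horizontal operator has the form of an odd signature-type operator with coefficients in the graded bundle $H$. On an even-dimensional base there is a globally defined Hodge star involution $\tau_B$ (renormalised by $i^{p(p-1)+\dim B/2}$) that anti-commutes with $d+d^*$ on $B$. Tensoring with the Hodge star on fibrewise cohomology, one obtains an involution on $\Omega^\bullet(B;H)$ that anti-commutes with $B_B^{\mathrm{eff}}$, giving a symmetry of the spectrum about $0$ and hence $\eta(B_B^{\mathrm{eff}})=0$.

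The mechanical identification of the characteristic-class integrand is straightforward; the main obstacle is the identification of the very small eigenvalues with the spectral sequence signatures in the orbifold setting, because Dai's argument uses a delicate asymptotic expansion of eigenvalues on a smooth total space, and one must verify that the local lifts of the Seifert fibration to the uniformising orbifold charts carry this analysis through equivariantly without picking up extra singular contributions.
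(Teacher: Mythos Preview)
Your derivation of the main formula follows the paper's line: apply Theorem~\ref{Thm1}, replace $\hat A_{\Lambda B}$ by $\hat L_{\Lambda B}$ via Remark~\ref{B6.R1} (your $\hat A\cdot\ch(SB)=\hat L$ identification is exactly the content of that remark for $W=SB$), and identify $\sum_\nu\sign(\lambda_\nu(\eps_0))$ with $\sum_{r\ge 2}\tau_r$ by quoting Dai. Your caution about the orbifold setting in this last step is largely misplaced: the total space $M$ is a smooth manifold by definition of a Seifert fibration, so the eigenvalue asymptotics of~\cite{Dai} and~\cite{MM} apply directly on $M$; only the Leray--Serre spectral sequence itself needs an orbifold reading, and the paper says no more about this than you do.

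Where you diverge from the paper is the vanishing of $\eta(B_B^{\mathrm{eff}})$ for even $\dim B$, and here your argument has a gap. The paper's Proposition~\ref{B2.P3} simply observes (citing~\cite[section~4.1]{Dai}) that $B_B^{\mathrm{eff}}$ exchanges even and odd forms in $\Omega^\bullet(B;H)$, so $B_B^{\mathrm{eff}}\,e^{-t(B_B^{\mathrm{eff}})^2}$ is block off-diagonal in the form-degree parity and its trace vanishes identically in $t$. Your proposed involution $\tau_B\otimes *_X$ is not obviously adequate. By the proof of Proposition~\ref{B2.P2}, the correction term $A=B_B^{\mathrm{eff}}-D_B^H$ anti-commutes with each horizontal $c_\alpha$; since $\dim B$ is even, $A$ therefore \emph{commutes} with the horizontal Clifford volume $\tau_B$, while $D_B^H$ anti-commutes with it. Tensoring with the fibrewise Hodge star does not obviously repair this mismatch: on an odd-dimensional fibre the vertical Clifford volume is central, and in any case $*_X$ need not be $\nabla^H$-parallel. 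The parity argument sidesteps all of this because both $D_B^H$ and $A$ carry exactly one horizontal Clifford factor $c_\gamma$ and hence shift the base form degree by $\pm 1$.
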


\begin{proof}
  This follows from Theorem~\ref{Thm1} as in Dai's paper~\cite{Dai}.
  The first term again arises because of Remark~\ref{B6.R1}.
  The vanishing of~$\eta(B_B^{\mathrm{eff}})$ for even dimensional base
  orbifolds follows from Proposition~\ref{B2.P3}.
\end{proof}

\subsection{Seifert fibrations with compact structure group}\label{A6}
We assume that the fibres of the map~$p\colon M\to B$
are totally geodesic submanifolds of~$M$.

Assume for the moment that~$B$ is a connected Riemannian manifold
and that~$p\colon M\to B$ is an ordinary Riemannian submersion.
Each path in~$B$ induces a parallel transport
between the fibres over its endpoints.
By a result of Hermann~\cite{Hermann},
all these parallel translations are isometries if and only if the fibres of~$p$
are totally geodesic.
In this case, let~$X$ be isometric to a fibre of~$p$
and let~$G$ denote the isometry group of~$X$ acting from the left.
Then there is a natural fibre $G$-principal bundle
	$$P=\bigl\{\,f\colon X\to M
	\bigm|f\text{ is an isometry onto a fibre of }p\,\bigr\}$$
with a natural right $G$-action,
and we have~$M=P\times_GX$.

If we are given an adiabatic family~$(E,\nabla^{E,\eps},g^E,c)$
of Dirac bundles as in Definition~\ref{A3.D1},
then we assume further that the parallel transport between fibres
lifts to isomorphisms between the restrictions of~$(E,\nabla^{E,\eps},g^E,c)$
to the fibres of~$p$.
In this case,
let~$G$ denote the automorphism group of
	$$\bigl((E,\nabla^{E,\eps},g^E,c)|_X\bigr)
		\longrightarrow\bigl(X,g^X\bigr)\;,$$
then the family~$p\colon M\to B$ is still associated to a $G$-principal
bundle~$P\to B$.
In this case, we say that the adiabatic family~$(E,\nabla^{E,\eps},g^E,c)$
has {\em compact structure group\/}~$G$.

Let~$b\in B$ and identify~$p^{-1}(b)$ with~$X$ and~$E_{p^{-1}(b)}$
with~$E|_X\to X$,
then for~$v$, $w\in T_bB$,
the fibre bundle curvature~$\overline{[v,w]}-[\bar v,\bar w]$
together with its natural action on~$E$ is described by an
element~$\Omega(v,w)\in\frg$.
Different identifications of~$E\to X$ with~$E|_{p^{-1}(b)}$ give
elements of~$\frg$ in the same $\Ad_G$-orbit.

By~\cite[Lemma~1.14]{Go},
there exists an $\Ad_G$-invariant formal power
series~$\eta_{\mathfrak g}(D_X)\in\C[\![\mathfrak g]\!]$,
the {\em infinitesimally equivariant $\eta$-invariant,\/} such that
\begin{equation}\label{A6.2}
  2\eta(\mathbb A)=\eta_{\frac{\Omega}{2\pi i}}(D_X)\;.
\end{equation}
Here, $D_X$ is the component of~$\A$ of degree~$0$,
regarded as an operator on a bundle~$W\to X$ such that locally,
$E\cong W\otimes p^*SB$.
This invariant has been computed for the untwisted Dirac operator
and the signature operator on~$S^3$ in~\cite[Theorem~3.9]{Go}.
A more general formula for quotients of compact Lie groups
with normal metrics can be found in~\cite[section~2.4]{Gohome}.

Now, let~$p\colon M\to B$ be a Seifert fibration with generic fibre~$X$
and assume again that all fibres are totally geodesic.
Then the construction above still applies to bundle charts as in
Definition~\ref{A1.D2}.
If we trivialise~$p$ over~$V$ by parallel translation along radial geodesics
in~$V$,
then the isotropy group~$\Gamma$ acts on~$V\times X$ by
	$$\sigma_\gamma(v,x)=\bigl(\rho_\gamma(v),\sigma_\gamma(x)\bigr)$$
with~$\sigma_\gamma\in G$ for all~$\gamma\in\Gamma$.
Thus, we obtain a $G$-principal orbibundle
	$$P=\bigl\{\,f\colon X\to M
	\bigm|f\text{ is a local isometry onto a fibre of }p\,\bigr\}\;,$$
and again, we have~$M=P\times_GX\to B$.
Moreover, for~$\gamma\in\Gamma$,
the restricted curvature~$\Omega|_{TV^\gamma}$
takes values in the $\Ad_\gamma$-invariant part of~$\mathfrak g$.
Thus, if~$(p,(\gamma))\in\Lambda B\setminus B$,
let~$\psi_{(\gamma)}\colon C_\Gamma(\gamma)\backslash V^\gamma
\to\Lambda B$ be an orbifold chart for~$\Lambda B$ around~$(p,(\gamma))$
as in~\eqref{A2.1}.
We regard the pullback of~$M\to B$ restricted to~$V^\gamma$
and identify~$\gamma$ with~$\sigma_\gamma\in G$ acting on~$X$ and~$E$.
Then~$\Omega|_{V^\gamma}$ takes values in the Lie algebra~$\mathfrak c(\sigma_\gamma)$
of the centraliser~$C_G(\sigma_\gamma)$ of~$\sigma_\gamma$ in~$G$.

\begin{Theorem}\label{A6.T1}
  Let~$p\colon M\to B$ be a Seifert fibration,
  and let~$(E,\nabla^{E,\eps},g^E,c)$ be an adiabatic family
  with compact structure group~$G$.
  For each~$(p,(\gamma))\in\Lambda B$,
  there exists a formal power series
	$$\eta_{g,\mathfrak c(\sigma_\gamma)}(D_X)
		\in\R[\![\mathfrak c(\sigma_\gamma)]\!]$$
  such that the orbifold $\eta$-form is given
  in an orbifold chart~$\psi_{(\tilde\gamma)}$
  around~$(p,(\gamma))$ as
	$$\psi^*_{(\tilde\gamma)}\eta_{\Lambda B}(\A)
	=\eta_{\sigma_\gamma,\frac{\Omega}{2\pi i}}(D_X)\;.$$

  If~$\gamma=\id$, then~$\eta_{g,\mathfrak c(\sigma_\gamma)}(D_X)
  =\eta_{\mathfrak g}(D_X)$ is the infinitesimally equivariant $\eta$-invariant.
  If~$\gamma$ acts freely on the typical fibre~$X$,
  then~$\eta_{\gamma,\mathfrak c(\sigma_\gamma)}(D_X)$
  is the formal power series expansion
  of the classical equivariant $\eta$-invariant~$\eta_{\sigma_\gamma\,e^{-\Xi}}(D_X)$
  at~$\Xi=0\in\mathfrak c(\sigma_\gamma)$.
\end{Theorem}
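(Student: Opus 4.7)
The plan is to reduce the statement to a local computation in a bundle chart and adapt the construction of the infinitesimally equivariant $\eta$-invariant of~\cite{Go} to the non-identity conjugacy class~$(\gamma)$.

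First, fix $(p,(\gamma)) \in \Lambda B$ and an orbifold chart $\psi\colon \Gamma\backslash V \to U \ni p$ as in section~\ref{A1}. Using parallel transport along radial geodesics out of $0\in V$, trivialise the $G$-principal orbibundle $P|_V$ as $V\times G$; in this trivialisation, $M|_U \cong \Gamma\backslash(V\times X)$, the element $\gamma$ acts as $(v,x)\mapsto(\rho_\gamma v,\sigma_\gamma x)$, and the connection on $P$ is encoded by its $\frg$-valued curvature $\Omega\in\Omega^2(V;\frg)$. The local splitting $E\cong p^*SB\otimes W$ from~\eqref{A3.4} factorises the action as $\gamma=\tilde\gamma^W\circ\tilde\gamma$. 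I would then write out Bismut's superconnection $\A_t$ in this trivialisation: because radial parallel transport preserves the Dirac bundle data along fibres, $\A_t$ takes the schematic form $t^{1/2}D_X+\sum c(\bar f_\alpha)\nabla^{p_*W,0}_{f_\alpha}+t^{-1/2}\A_2$, with $\A_2$ determined entirely by $\Omega$ and the mean curvature.

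Second, I would restrict to the fixed-point set $V^\gamma$, where $\Omega|_{V^\gamma}$ takes values in $\mathfrak c(\sigma_\gamma)$ by the preceding discussion in the text. The equivariant $\eta$-form~\eqref{A3.3} becomes
\[
  \eta_{\tilde\gamma}(\A)|_{V^\gamma}
  = \int_0^\infty \tfrac{1}{\sqrt\pi}\,(2\pi i)^{-N^{V^\gamma}/2}\,
     \trace_{p_*W}\bigl(\tilde\gamma^W\,\partial_t\A_t\,e^{-\A_t^2}\bigr)\,dt.
\]
Expanding $e^{-\A_t^2}$ via Duhamel's formula in the nilpotent differential-form variables on $V^\gamma$ produces a polynomial in $\Omega$ whose coefficients are $\sigma_\gamma$-equivariant heat-kernel traces on the typical fibre $X$, exactly as in~\cite[Lemma~1.14]{Go} but with the neutral element replaced by $\sigma_\gamma\in G$. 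I would identify this expansion with the formal Taylor expansion at $\Xi=0\in\mathfrak c(\sigma_\gamma)$ of the equivariant $\eta$-integral
\[
  \eta_{\sigma_\gamma,\Xi}(D_X)
  = \int_0^\infty \tfrac{1}{\sqrt\pi}\,
     \trace\bigl(\sigma_\gamma\,e^{-\Xi}\,\partial_t D_{X,t}\,
       e^{-D_{X,t}^2}\bigr)\,dt,
\]
where $D_{X,t}$ is the corresponding one-parameter family used in~\cite[Lemma~1.14]{Go}; the substitution $\Xi=\Omega/(2\pi i)$ together with the $(2\pi i)^{-N^{V^\gamma}/2}$-factor in~\eqref{A3.3} reproduces the formula. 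This defines the asserted formal power series $\eta_{\sigma_\gamma,\mathfrak c(\sigma_\gamma)}(D_X)$.

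The two special cases then follow immediately: for $\gamma=\id$, we have $\sigma_\gamma=e$ and $\mathfrak c(\sigma_\gamma)=\frg$, so we recover $\eta_\frg(D_X)$ via~\eqref{A6.2}; if $\gamma$ acts freely on $X$, then $\sigma_\gamma$ has no fixed points on $X$, small-time convergence is automatic, and the integral defining $\eta_{\sigma_\gamma e^{-\Xi}}(D_X)$ converges classically for small $\Xi$, so its Taylor series at $0$ agrees with our formal series by construction. The main obstacle will be verifying $\Ad_G$-invariance of the resulting series, so that $\eta_{\sigma_\gamma,\mathfrak c(\sigma_\gamma)}(D_X)$ depends only on the conjugacy class of $\sigma_\gamma$ and not on the chosen radial trivialisation of $P$, and carefully tracking the sign coming from the lift $\tilde\gamma$ versus $-\tilde\gamma$ together with the $(2\pi i)^{-N^{V^\gamma}/2}$-factor to ensure that the answer lands in $\R[\![\mathfrak c(\sigma_\gamma)]\!]$ with the sign required by Remark~\ref{A2.R1}~\eqref{A2.R1.3}. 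Both are routine extensions of~\cite[Lemma~1.14]{Go}.
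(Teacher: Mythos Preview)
Your approach is correct and aligns with the paper's, but you are working much harder than necessary. The paper's proof is two sentences: it observes that for a Seifert fibration the action of~$\Gamma$ on~$V\times X$ is \emph{free} (Definition~\ref{A1.D2}), so whenever~$\gamma\ne\id$ the element~$\sigma_\gamma$ already acts freely on the typical fibre~$X$; hence the two ``special cases'' in the theorem statement exhaust all possibilities, and one simply cites~\cite[Lemma~1.14]{Go} for~$\gamma=\id$ and~\cite[Remark~3.12]{Go} for~$\gamma\ne\id$. Your Duhamel expansion and the identification with the Taylor series of~$\eta_{\sigma_\gamma e^{-\Xi}}(D_X)$ are precisely what those two references contain, so you are re-deriving them rather than invoking them. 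The ``main obstacle'' you flag ($\Ad$-invariance and sign tracking) is likewise already handled in~\cite{Go} and need not be redone here.
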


\begin{proof}
  If~$\gamma=\id$,
  this is just \cite[Lemma~1.14]{Go}.
  If~$\gamma\ne\id$,
  then~$\gamma$ acts freely on the fibre~$X$ because~$p\colon M\to B$
  is a Seifert fibration,
  and the result is explained and proved in~\cite{Go}, Remark~3.12.
\end{proof}

Note that over each singular stratum of~$B$,
the fibres of~$p$ are finite quotients of~$X$,
so that we are in a situation similar to Lemma~3.11 in~\cite{Go}.

\section{A proof of the adiabatic limit Theorem}\label{B}
In this section,
we sketch a proof of Theorem~\ref{Thm1}.
We will omit most of the details,
in particular those explained by Bismut, Cheeger in~\cite{BC1}
and by Dai in~\cite{Dai}.
The proof is based on the well-known formula
\begin{equation*}
  \eta(D_{M,\eps})=\int_0^\infty\frac1{\sqrt{\pi t}}
	\,\trace\Bigl(D_{M,\eps}\,e^{-tD_{M,\eps}^2}\Bigr)\,dt\;.
\end{equation*}
We define a spectral projection~$P_\eps$ onto the sum of the eigenspaces
for the very small eigenvalues in section~\ref{B4},
which commutes with~$D_{M,\eps}$ for each~$\eps>0$.
We also find a small constant~$\alpha>0$ and write
\begin{align*}
  \eta(D_{M,\eps})
  &=\int_0^{\eps^{\alpha-2}}\frac1{\sqrt{\pi t}}
	\,\trace\Bigl(D_{M,\eps}\,e^{-tD_{M,\eps}^2}\Bigr)\,dt\\
  &\qquad+\int_{\eps^{\alpha-2}}^\infty\frac1{\sqrt{\pi t}}
	\,\trace\Bigl((1-P_\eps)\,D_{M,\eps}\,e^{-tD_{M,\eps}^2}\Bigr)\,dt\\
  &\qquad+\int_{\eps^{\alpha-2}}^\infty\frac1{\sqrt{\pi t}}
	\,\trace\Bigl(P_\eps\,D_{M,\eps}\,e^{-tD_{M,\eps}^2}\Bigr)\,dt\;.
\end{align*}
The three terms on the right hand side give rise to the three
expressions on the right hand side of Theorem~\ref{Thm1}
by Propositions~\ref{B6.P1}, \ref{B4.P1} and~\ref{B5.P1},
respectively,
which will be stated and proved below.
Thus, Theorem~\ref{Thm1} follows from the results of this section.

\subsection{Local Computations}\label{B1}
We will use small roman indices in~$\{1,\dots, n\}$ referring to coordinates
of the fibres,
small greek indices in~$\{n+1,\dots,m\}$ referring to coordinates of the base,
and capital indices in~$\{1,\dots, m\}$.
Let~$\nabla^{TM,\eps}$ denote the Levi-Civita connection with respect
to the bundle-like metric~$g_\eps=g^{TX}\oplus\eps^{-2}\,p^*g^{TB}$.

Let~$\nabla^{TB}$ denote the Levi-Civita connection
on the orbibundle~$TB\to B$.
By~\cite{BC1}, there exists a connection~$\nabla^{TX}$ on~$TX\to M$,
a symmetric tensor~$S\colon TX\otimes TX\to T^HM$
and an antisymmetric tensor~$T\colon T^HM\otimes T^HM\to TX$,
with coefficients~$s_{ij\gamma}$ and~$t_{\alpha\beta k}$,
such that
\begin{align*}
  \nabla^{TM,\eps}_{e_i}e_j
  &=\nabla^{TX}_{e_i}e_j+\eps\,\sum_\gamma s_{ij\gamma}\,e_\gamma^\eps \;,\\
  \nabla^{TM,\eps}_{e_\alpha}e_j
  &=\nabla^{TX}_{e_\alpha}e_j
	-\eps\,\sum_\beta t_{\alpha\beta j}\,e_\beta^\eps \;,\\
  \nabla^{TM,\eps}_{e_i}e_\beta^\eps 
  &=-\eps\,\sum_ks_{ik\alpha}e_k
	-\eps^2\,\sum_\gamma t_{\beta\gamma i}\,e_\gamma^\eps \;,\\
	\text{and}\qquad
  \nabla^{TM,\eps}_{e_\alpha}e_\beta^\eps 
  &=\bigl(p^*\nabla^{TB}\bigr)_{e_\alpha}e_\beta^\eps 
	+\eps\,\sum_k t_{\alpha\beta k}\,e_k\;.
\end{align*}
We identify the tangent bundles~$(TM,g_\eps)$ orthogonally
for different~$\eps>0$ by sending~$(e_1, \dots, e_m)$ at~$\eps=1$
to the $g_\eps$-orthonormal frame of~\eqref{A3.0}.
With respect to this identification,
we obtain the limit connection
\begin{equation}\label{B1.3}
  \nabla^{TM,0}
  =\lim_{\eps\to 0}\nabla^{TM,\eps}
  =\nabla^{TX}\oplus p^*\nabla^{TB}\;.
\end{equation}
Note that this differs from the geometric limit of Levi-Civita connections
described for example in~\cite[section~10.1]{BGV}.

Let us assume for the moment that the base~$B$ and the map~$p$ are spin,
which is always true locally on~$M$,
and let~$SX\to M$ be a spinor bundle for~$TX\to M$.
Then we have an isomorphism of vector bundles
	$$SM\cong SX\otimes p^*SB$$
independent of~$\eps$,
and the connection~$\nabla^{SM,0}$ induced by~$\nabla^{TM,0}$ is the tensor
product connection.
To define Clifford multiplication~$c_I$ by~$e_I^\eps$ on~$SM$
for all~$I=1$, \dots, $m$,
the tensor product is understood in a $\Z_2$-graded sense.
The connections~$\nabla^{TM,\eps}$ induce connections~$\nabla^{SM,\eps}$
on the spinor bundle~$SM\to M$ for all~$\eps\ge 0$.
We have
\begin{align}
  \begin{split}
  \nabla^{SM,\eps}_{e_i}
  &=\nabla^{SM,0}_{e_i}
	+\frac\eps 2\,\sum_{j,\gamma}s_{ij\gamma}c_jc_\gamma
        -\frac{\eps^2}4\,\sum_{\alpha,\beta}t_{\alpha\beta i}\,c_\alpha c_\beta\;,\\
  \nabla^{SM,\eps}_{e_\alpha}
  &=\nabla^{SM,0}_{e_\alpha}-\frac\eps2\,\sum_{i,\beta}t_{\alpha\beta i}c_ic_\beta\;.
  \end{split}\label{B1.0}
\end{align}

Let~$(E,\nabla^{E,\eps},g^E,c)_{\eps>0}$ be an adiabatic family of Dirac bundles
on~$M$ as in Definition~\ref{A3.D1}.
We can now define a vertical and a horizontal Dirac operator by
\begin{equation}\label{B1.1}
  D_X=\sum_ic_i\nabla^{E,0}_{e_i}\;,
	\qquad\text{and}\qquad
  D_{B,\eps}=\frac1\eps\,\bigl(D_{M,\eps}-D_X\bigr)\;.
\end{equation}
Let~$\nabla^{E/S,\eps}-\nabla^{E/S,0}$ denote the one-form
of~\eqref{A3.5}.
Then
\begin{align}
  \begin{split}\label{B1.2}
  D_{B,\eps}&=\sum_\alpha c_\alpha\,\biggl(\nabla^{E,0}_{e_\alpha}
	+\frac12\,\sum_{i,j}s_{ij\alpha}\,c_ic_j
	-\frac\eps4\sum_{i,\beta}t_{\alpha\beta i}
		\,c_i c_\beta\biggr)\\
     &\qquad
	+\sum_ic_i\,\frac1\eps\,\bigl(\nabla^{E/S,\eps}-\nabla^{E/S,0}\bigr)_{e_i}
	+\sum_\alpha c_\alpha
		\,\bigl(\nabla^{E/S,\eps}-\nabla^{E/S,0}\bigr)_{e_\alpha}\\
     &=\sum_\alpha c_\alpha\,\biggl(\nabla^{E,0}_{e_\alpha}
	-\frac12\,h_\alpha\biggr)
	+\sum_ic_i\,\frac1\eps\,\bigl(\nabla^{E/S,\eps}-\nabla^{E/S,0}\bigr)_{e_i}\\
     &\qquad+\eps\,\sum_\alpha c_\alpha\biggl(
	\frac1\eps\,\bigl(\nabla^{E/S,\eps}-\nabla^{E/S,0}\bigr)_{e_\alpha}
	-\frac14\sum_{i,\beta}t_{\alpha\beta i}\,c_i c_\beta\biggr)
	\;,
  \end{split}
\end{align}
where~$h\in T^HM$ denotes the mean curvature vector of the fibres in~$(M,g)$,
and~$h_\alpha$ denotes its component in the direction of~$\alpha$.
Note that the connection~$\nabla^{E,0}-\frac12\,\<h,\punkt\>$ for~$\eps=0$
in the above expression for~$D_{B,\eps}$ is not unitary on~$E\to M$ in general,
but it induces a unitary connection~$\nabla^{p_*E,0}$
on the infinite dimensional vector orbibundle~$p_*E\to B$.

\begin{Lemma}\label{B1.L1}
  Let~$(E,\nabla^{E,\eps},g^E_\eps)_{\eps>0}$ be family of Dirac bundles
  on the family of Riemannian manifolds~$(M,g^{TM}_\eps)_{\eps>0}$.
  Decompose the associated family of Dirac
  operators~$D_{M,\eps}=D_X+\eps\,D_{B,\eps}$ as above.
  Then the anticommutator of~$D_X$ and~$D_{B,\eps}$
  is the sum of a fibrewise differential operator of order one
  and an endomorphism of~$E$.
\end{Lemma}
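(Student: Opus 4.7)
The plan is a direct computation starting from the explicit expressions~\eqref{B1.1} and~\eqref{B1.2}, exploiting just two algebraic facts: the Clifford anti-commutation $c_i c_\alpha + c_\alpha c_i = 0$ (orthogonality of the vertical and horizontal frames) and the splitting $\nabla^{TM,0} = \nabla^{TX}\oplus p^*\nabla^{TB}$ from~\eqref{B1.3}.

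Reading off~\eqref{B1.2}, I first split $D_{B,\eps} = D^H + F_\eps$, where $D^H = \sum_\alpha c_\alpha\,\nabla^{E,0}_{e_\alpha}$ is the horizontal Dirac piece and $F_\eps \in \Gamma(\End E)$ collects all remaining zeroth-order terms, namely the mean-curvature factor $-\tfrac12 h_\alpha c_\alpha$, the $t_{\alpha\beta i}$-term of order $\eps$, and the fibrewise and horizontal components of $\tfrac1\eps(\nabla^{E/S,\eps}-\nabla^{E/S,0})$, all analytic in $\eps$ by Definition~\ref{A3.D1}~\eqref{A3.D1.2}. Since $F_\eps$ is an endomorphism, one has
\begin{equation*}
\{D_X, F_\eps\} = \sum_i c_i\,\nabla^{E,0}_{e_i}(F_\eps) + \sum_i(c_i F_\eps + F_\eps c_i)\,\nabla^{E,0}_{e_i},
\end{equation*}
which is already of the required form of a fibrewise first-order operator plus an endomorphism.

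The main step is the anticommutator $\{D_X, D^H\}$. Expanding via Leibniz and using $c_i c_\alpha = -c_\alpha c_i$, the second-order contributions collapse to
\begin{equation*}
\sum_{i,\alpha} c_i c_\alpha \bigl[\nabla^{E,0}_{e_i}, \nabla^{E,0}_{e_\alpha}\bigr]
= \sum_{i,\alpha} c_i c_\alpha \bigl(R^{E,0}(e_i, e_\alpha) + \nabla^{E,0}_{[e_i, e_\alpha]}\bigr).
\end{equation*}
The curvature term is an endomorphism. For the second piece I invoke the standard submersion identity that $[e_i, \bar f_\alpha]$ is vertical: since $e_i$ is $p$-related to $0$ and $\bar f_\alpha$ is $p$-related to $f_\alpha$, the bracket is $p$-related to $[0, f_\alpha] = 0$. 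Hence $\nabla^{E,0}_{[e_i, e_\alpha]}$ is a fibrewise first-order operator. The remaining first-order cross-terms come from the Clifford compatibility commutators $[\nabla^{E,0}_{e_i}, c_\alpha] = c(\nabla^{TM,0}_{e_i} e_\alpha)$ and $[\nabla^{E,0}_{e_\alpha}, c_i] = c(\nabla^{TM,0}_{e_\alpha} e_i)$: the first vanishes by~\eqref{B1.3} because $p^*\nabla^{TB}_{e_i}\bar f_\alpha$ is the horizontal lift of $\nabla^{TB}_{dp(e_i)} f_\alpha = 0$, while the second lies in $TX$ and therefore contributes, after contraction with $\nabla^{E,0}_{e_i}$, only fibrewise first-order terms.

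The main obstacle is the bookkeeping: several small pieces from~\eqref{B1.2} have to be tracked through the expansion, and a few terms that look horizontal must be recognised either as endomorphisms (through curvature) or as vertical first-order operators (through the submersion identity). All of the conceptual content, however, is captured by the two identities $c_i c_\alpha = -c_\alpha c_i$ and the verticality of $[e_i, \bar f_\alpha]$, both classical in the geometry of Riemannian submersions.
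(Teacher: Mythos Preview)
Your proof is correct and follows essentially the same route as the paper's: split off the zeroth-order part of~$D_{B,\eps}$ (whose anticommutator with~$D_X$ is automatically of the required form), and for the horizontal connection piece~$\sum_\alpha c_\alpha\nabla^{E,0}_{e_\alpha}$ use $c_ic_\alpha=-c_\alpha c_i$ together with $\nabla^{TM,0}_{e_i}e_\alpha=0$ and the verticality of~$[e_i,e_\alpha]$. Your write-up is slightly more explicit in separating the curvature term from~$\nabla^{E,0}_{[e_i,e_\alpha]}$ and in justifying the verticality via $p$-relatedness, but the substance is identical.
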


We write supercommutators as~$[\punkt,\punkt]$.

\begin{proof}
  Because~$D_X$ is of order one and involves only fibrewise differentiation,
  supercommutators of~$D_X$ with a zero order operator satisfy the assertion
  above. Hence, it suffices to consider
  \begin{multline*}
    \sum_\alpha\Bigl[D_X,\nabla^{E,0}_{e_\alpha}\Bigr]
    =\sum_{i,\alpha}\Bigl(c_ic\bigl(\nabla^{TM,0}_{e_i}e_\alpha\bigr)
		\,\nabla^{E,0}_{e_\alpha}\\
	+c_ic_\alpha\,(\nabla^{E,0})^2_{e_i,e_\alpha}
	+c_ic_\alpha\,\nabla^{E,0}_{[e_i,e_\alpha]}
        +c_\alpha c\bigl(\nabla^{TM,0}_{e_\alpha}e_i\bigr)\nabla^{E,0}_{e_i}\Bigr)\;.
  \end{multline*}
  Because~$e_\alpha$ is the horizontal lift of a vector field on~$B$,
  we have~$\nabla^{TM,0}_{e_i}e_\alpha=(p^*\nabla^{TB})_{e_i}e_\alpha=0$,
  and~$[e_i,e_\alpha]$ is a vertical vector field.
  Our claim follows.
\end{proof}

\subsection{The effective horizontal operator}\label{B1b}
We regard the infinite-dimensional bundle~$p_*E\to B$.
Together with the connection~$\nabla^{p_*E,0}$ of~\eqref{A3.2},
it becomes an infinite-dimensional Dirac orbibundle on~$B$.

Let~$P_X\in\End(p_*E)$ denote the fibrewise $L^2$-projection
on~$\ker D_X$.
By assumption~\eqref{A3.D1.3} in Definition~\eqref{A3.D1},
$H=\ker D_X=\im P_X$ is a finite rank vector bundle over~$B$.
Note that~$P_X$ does not necessarily commute
with the connection~$\nabla^{p_*E,0}$.
We define a connection~$\nabla^H$ on~$H$ by
\begin{equation*}
  \nabla^H=P_X\circ\nabla^{p_*E,0}\circ P_X
  =P_X\circ\biggl(\nabla^{SM,0}-\frac12\,\<h,\punkt\>\biggr)\circ P_X\;.
\end{equation*}

\begin{Proposition}\label{B2.P0}
  Let~$P_X$ and~$H$ be as above.
  \begin{enumerate}
  \item\label{B2.P0.1}
    The operator~$P_X$ is a fibrewise smoothing operator of finite rank
    that commutes with~$D_X$ and with Clifford multiplication
    with horizontal vectors.
  \item\label{B2.P0.2}
    The orbibundle~$H\to B$,
    equipped with the restriction of the fibrewise $L^2$-metric
    and the connection~$\nabla^H$,
    becomes a finite-dimensional Dirac orbibundle on~$B$.
  \end{enumerate}
\end{Proposition}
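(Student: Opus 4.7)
The plan is to handle the two assertions in turn, reducing each to properties already developed for the fibrewise Dirac operator and for the split connection~$\nabla^{TM,0}$.

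For the first assertion, by assumption~\eqref{A3.D1.3} in Definition~\ref{A3.D1} the kernel $H=\ker D_X$ is a finite-rank vector orbibundle, and its fibrewise elements are smooth by elliptic regularity for the vertical Dirac operator. A local orthonormal frame $s_1,\dots,s_r$ of $H$ then expresses $P_X$ as $P_X f=\sum_{j=1}^r\langle f,s_j\rangle_{L^2}\,s_j$, exhibiting it as a fibrewise smoothing operator of finite rank. Since $D_X$ is self-adjoint and $P_X$ is the spectral projection onto $\{0\}$, the two commute. The only nontrivial point is commutation with horizontal Clifford multiplication, for which I would show $[c(\bar w),D_X]=0$ for any horizontal lift $\bar w\in T^HM$ by the same kind of bracket expansion as in Lemma~\ref{B1.L1}: the graded commutator $[c(\bar w),c_i]$ vanishes because $\bar w\perp e_i$, and $[\nabla^{E,0}_{e_i},c(\bar w)]=c(\nabla^{TM,0}_{e_i}\bar w)$ vanishes because $\nabla^{TM,0}=\nabla^{TX}\oplus p^*\nabla^{TB}$ preserves the horizontal-vertical splitting and $\bar w$ is a horizontal lift, so differentiating it in a vertical direction gives zero. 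Hence $c(\bar w)$ preserves $\ker D_X$ and commutes with $P_X$.

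For the second assertion, I would build the Dirac structure on $H$ from the horizontal data on $E$. The Hermitian metric is the restriction of $g^{p_*E}_{L^2}$ to $H$, and the Clifford multiplication $c^H\colon TB\to\End H$ sends $w$ to the restriction of $c(\bar w)$ to $H$, which is well-defined by the first assertion and satisfies the Clifford relations with respect to $g^{TB}$ because $dp|_{T^HM}$ is a fibrewise isometry. Unitarity of $\nabla^H=P_X\nabla^{p_*E,0}P_X$ follows from unitarity of $\nabla^{p_*E,0}$ together with the orthogonal-projection property of $P_X$: for $s,t\in\Gamma(H)$,
\begin{equation*}
  d\langle s,t\rangle
  =\langle\nabla^{p_*E,0}s,t\rangle+\langle s,\nabla^{p_*E,0}t\rangle
  =\langle\nabla^H s,t\rangle+\langle s,\nabla^H t\rangle,
\end{equation*}
since one may insert $P_X$ against each factor without altering the pairing.

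The main technical step, and where I would expect the bulk of the work, is the Dirac-bundle compatibility $[\nabla^H_v,c^H(w)]=c^H(\nabla^{TB}_v w)$ for vector fields $v,w$ on $B$. I would first establish on all of $p_*E$ the identity $[\nabla^{p_*E,0}_v,c(\bar w)]=c(\overline{\nabla^{TB}_v w})$, using that $\nabla^{TM,0}$ preserves the horizontal subbundle so that $\nabla^{TM,0}_{\bar v}\bar w$ coincides with the horizontal lift of $\nabla^{TB}_v w$, together with the observation that the mean-curvature correction $-\tfrac12\langle h,\bar v\rangle$ in the definition of $\nabla^{p_*E,0}$ is a scalar and commutes with $c(\bar w)$. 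Conjugating by $P_X$, which by the first assertion commutes with both $c(\bar w)$ and $c(\overline{\nabla^{TB}_v w})$, the identity descends to the asserted one on $H$. This shows that $(H,g^{p_*E}_{L^2}|_H,\nabla^H,c^H)$ is a finite-dimensional Dirac orbibundle over $B$, completing the proof plan.
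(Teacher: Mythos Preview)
Your proposal is correct and follows the same line as the paper's proof, only with considerably more detail filled in. The paper's argument is extremely terse: it simply observes that~$P_X$ commutes with~$D_X$ by construction and with horizontal Clifford multiplication~$c_\alpha$ because~$D_X$ anticommutes with~$c_\alpha$; for part~\eqref{B2.P0.2} it notes that~$\nabla^{p_*E,0}$ is unitary and that~$P_X$ commutes with~$c_\alpha$, and declares the result. Your proof supplies the justifications the paper leaves implicit---in particular, the explicit verification that~$[c(\bar w),D_X]=0$ as a supercommutator via the split form~$\nabla^{TM,0}=\nabla^{TX}\oplus p^*\nabla^{TB}$, and the Dirac-bundle compatibility~$[\nabla^H_v,c^H(w)]=c^H(\nabla^{TB}_vw)$, which the paper does not spell out at all.
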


\begin{proof}
  The projection~$P_X$ commutes with~$P_X$ by construction,
  and with~$c_\alpha$ because~$D_X$ anticommutes with~$c_\alpha$.

  The connection~$\nabla^{p_*E,0}$ respects
  the $L^2$-scalar product,
  so its contraction~$\nabla^H$ onto~$H$ respects the induced scalar product.
  Because~$P_X$ commutes with~$c_\alpha$,
  we obtain a Dirac orbibundle.
\end{proof}

\begin{Remark}\label{B2.R1}
  By Definition~\ref{A3.D3},
  the effective horizontal operator is a Dirac operator if the family
  of local twist connections~$\nabla^{W,\eps}$ considered in the previous
  section is constant in~$\eps$.
  This is not the case for the odd signature operator~$B_{M,\eps}$
  on~$(M,g^{TM}_\eps)$,
  as explained in~\cite[section~4.1]{Dai}.
  The local twist bundle~$W$ is now given by~$(SM,\nabla^{SM,\eps})$.
  Hence, by equation~\eqref{B1.0},
  we have
	$$\frac d{d\eps}\Bigr|_{\eps=0}\nabla^{E/S,\eps}_{e_i}
	=\frac12\sum_{j,\gamma}s_{ij\gamma}c_jc_\gamma\;.$$
  This term only depends on the second fundamental form
  of the fibres, in particular, for totally geodesic fibrations
  the effective horizontal operator is in fact the Dirac
  operator on the Dirac bundle~$(H,g^H,\nabla^H)$
  of Proposition~\ref{B2.P0}~\eqref{B2.P0.2} above.
\end{Remark}

\begin{Proposition}\label{B2.P2}
  The $\eta$-invariant of~$D_B^{\mathrm{eff}}$ is given by a convergent integral,
  $$\eta\bigl(D_B^{\mathrm{eff}}\bigr)
    =\int_0^\infty\frac1{\sqrt{\pi t}}\,\trace
	\Bigl(D_B^{\mathrm{eff}}\,e^{-t(D_B^{\mathrm{eff}})^2}\Bigr)\,dt\;.$$
\end{Proposition}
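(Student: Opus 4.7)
The plan is to decompose $D_B^{\mathrm{eff}}$ as a genuine Dirac operator on the finite-rank Dirac orbibundle $(H,g^H,\nabla^H)$ of Proposition~\ref{B2.P0}~\eqref{B2.P0.2} plus a self-adjoint bundle endomorphism, and then to verify convergence of the integral at both ends of $(0,\infty)$ by standard arguments. By Proposition~\ref{B2.P0}~\eqref{B2.P0.1}, Clifford multiplication by horizontal vectors commutes with $P_X$, so
\begin{equation*}
  P_X\circ c(\bar f_\alpha)\,\nabla^{p_*E,0}_{f_\alpha}\circ P_X
  =c(\bar f_\alpha)\,\nabla^H_{f_\alpha},
\end{equation*}
and the first summand of $D_B^{\mathrm{eff}}$ is precisely the Dirac operator $D^H$ of $(H,g^H,\nabla^H)$. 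The second summand is a zero-order operator: $\frac{d}{d\eps}|_{\eps=0}\nabla^{E/S,\eps}_{e_i}$ is a section of $\End E$ that commutes with Clifford multiplication, so after composition with $c(e_i)$ and projection by $P_X$ it descends to a bundle endomorphism $A\in\Gamma(\End H)$. Self-adjointness of $D_B^{\mathrm{eff}}$ (recorded after Definition~\ref{A3.D3}) and of $D^H$ then forces $A$ to be self-adjoint, so $D_B^{\mathrm{eff}}=D^H+A$ is a self-adjoint first-order elliptic differential operator on a finite-rank Dirac orbibundle over the compact orbifold $B$, with discrete real spectrum obeying Weyl's law.

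For the tail $t\to\infty$, I use that $D_B^{\mathrm{eff}}\,e^{-t(D_B^{\mathrm{eff}})^2}$ vanishes identically on the finite-dimensional subspace $\ker D_B^{\mathrm{eff}}$, so with $\lambda_0>0$ denoting the smallest non-zero eigenvalue one has
\begin{equation*}
  \bigl|\trace\bigl(D_B^{\mathrm{eff}}\,e^{-t(D_B^{\mathrm{eff}})^2}\bigr)\bigr|
  \le e^{-(t-1)\lambda_0^2/2}\,
  \trace\bigl(|D_B^{\mathrm{eff}}|\,e^{-(D_B^{\mathrm{eff}})^2/2}\bigr)
  \qquad(t\ge 1),
\end{equation*}
which decays exponentially, and $\int_1^\infty t^{-1/2}e^{-ct}\,dt<\infty$.

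The main obstacle is convergence at $t\to 0$, where one must show that the small-time expansion of the integrand has no negative powers of $t$. I would carry this out by the standard Bismut--Freed argument based on Getzler rescaling applied to $D^H$: the pointwise heat symbol of $D_B^{\mathrm{eff}}\,e^{-t(D_B^{\mathrm{eff}})^2}(x,x)$ acquires one extra factor of Clifford multiplication from $D_B^{\mathrm{eff}}$, and the resulting Clifford trace on the local spinor bundle kills the would-be negative-power terms in the asymptotic expansion, exactly as in the proof of the Atiyah--Patodi--Singer index theorem. The perturbation $A$ modifies $(D_B^{\mathrm{eff}})^2=(D^H)^2+\{D^H,A\}+A^2$ only by operators of order at most $1$, so it enters through Duhamel-type lower-order terms and does not affect the leading Getzler symbol; the additional contributions stay bounded as $t\to 0$. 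In the orbifold setting, Kawasaki's heat-kernel expansion represents $\trace(D_B^{\mathrm{eff}}\,e^{-t(D_B^{\mathrm{eff}})^2})$ as a sum of local contributions indexed by the strata of $\Lambda B$; each has the same structural form and therefore obeys the same vanishing, so cancellation of negative powers passes through stratum by stratum. Combining the two regimes yields the claimed integral representation of $\eta(D_B^{\mathrm{eff}})$ by the usual Mellin-transform identification of the APS $\eta$-function at $s=0$.
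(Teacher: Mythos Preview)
Your overall strategy matches the paper's: decompose $D_B^{\mathrm{eff}} = D_B^H + A$ with $D_B^H$ the Dirac operator on $(H, g^H, \nabla^H)$ and $A$ a self-adjoint bundle endomorphism, treat $t \to \infty$ via discreteness of the spectrum, and invoke the Bismut--Freed $z$-trick with Getzler rescaling for $t \to 0$. The large-time part is fine.

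There is, however, a genuine gap in the small-time step. You assert that because $\{D_B^H,A\}+A^2$ has differential order at most~$1$, the perturbation ``enters through Duhamel-type lower-order terms and does not affect the leading Getzler symbol.'' But Getzler rescaling is governed by the \emph{Getzler order} (differential order plus horizontal Clifford degree), not by differential order alone. You yourself note that each summand $c(e_i)\,\tfrac{d}{d\eps}\big|_{\eps=0}\nabla^{E/S,\eps}_{e_i}$ is a vertical Clifford factor times an endomorphism commuting with Clifford multiplication; since vertical $c(e_i)$ anticommutes with every horizontal $c_\alpha$, the projected operator~$A$ \emph{anticommutes} with all~$c_\alpha$. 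In even $\dim B$ this forces $A=\tau\otimes A'$ with~$\tau$ the horizontal Clifford volume element, so under naive Getzler rescaling $A$ blows up like $t^{-\dim B/2}$---it is not lower order at all.

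The paper closes this gap by a parity case analysis that your sketch omits. When $\dim B$ is even, only odd powers of~$A$ survive in the trace, so one may replace~$A$ by the Clifford-scalar~$A'$ at the price of turning the trace into a supertrace; when $\dim B$ is odd, the vertical $\Z_2$-grading $H=H^+\oplus H^-$ makes~$A$ odd, so only even powers of~$A$ survive and one is again reduced to an odd Clifford trace. In either case the surviving operator has Getzler order~$0$, and only then do the extra terms $\sum_\alpha c_\alpha[\nabla^H_{f_\alpha},A]+A^2+zA$ in the modified Lichnerowicz formula genuinely have Getzler order~$\le 1$, so that the Bismut--Freed argument goes through as in~\cite{BF}. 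This anticommutation-plus-parity reduction is the substantive point of the proof.
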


\begin{proof}
  Convergence for~$t\to\infty$ is clear because we assumed that~$B$
  is compact, and hence~$D_B^{\mathrm{eff}}$ has discrete spectrum.

  For small-time convergence,
  we adapt the proof of~\cite[section~II]{BF}.
  We put
	$$A=D_B^{\mathrm{eff}}-D_B^H
	=\sum_iP_X\circ\biggl(c_i\,\frac d{d\eps}\Bigr|_{\eps=0}\nabla^{E/S,\eps}_{e_i}\biggr)\circ P_X\;.$$
  Because~$c_\alpha$ commutes with~$P_X$,
  we find that~$A$ anticommutes with Clifford multiplication,
	$$[c_\alpha,A]
	=P_X\circ\sum_i\biggl[c_\alpha,c_i\frac d{d\eps}\Bigr|_{\eps=0}\nabla^{E/S,\eps}_{e_i}\biggr]\circ P_X
        =0\;.$$
  In particular,
	$$(D_B^{\mathrm{eff}})^2
	=(D_B^H)^2+\sum_\alpha c_\alpha\bigl[\nabla^H_{f_\alpha},A\bigr]+A^2\;.$$

  We introduce an exterior variable~$z$ that anticommutes
  with the Clifford multiplication~$c_\alpha$
  and is parallel with respect to~$\nabla^H$.
  Consider the connection
  \begin{equation}\label{B2.0}
    \nabla^{H,z}=\nabla^H-\frac z2\,c(\punkt)
  \end{equation}
  on the Dirac bundle~$H$ of Proposition~\ref{B2.P0}.
  Then instead of the usual Bochner-Lichnerowicz-Weitzenb\"ock formula,
  one has
  \begin{multline}\label{B2.1}
    (D_B^{\mathrm{eff}})^2+z\,D_B^{\mathrm{eff}}
    =\nabla^{H,z,*}\nabla^{H,z}+\frac\kappa 4
	+\frac14\sum_{\alpha,\beta}c_\alpha c_\beta\,F^{H/S}_{f_{\alpha},f_{\beta}}\\
	+\sum_\alpha c_\alpha\bigl[\nabla^H_{f_\alpha},A\bigr]+A^2+zA\;.
  \end{multline}
  If~$P$, $Q$ are endomorphisms of a vector space,
  define~$\trace_z(P+zQ)=\trace(Q)$,
  then
  \begin{equation}\label{B2.2}
    \trace\Bigl(D_B^{\mathrm{eff}}\,e^{-t(D_B^{\mathrm{eff}})^2}\Bigr)
    =\frac1t\,\trace_z\Bigl(e^{-t((D_B^{\mathrm{eff}})^2-zD_B^{\mathrm{eff}})}\Bigr)\;.
  \end{equation}

  We want to compute the heat kernel of~$e^{-t((D_B^{\mathrm{eff}})^2-zD_B^{\mathrm{eff}})}$
  using Getzler rescaling.
  To see that this is possible,
  we have to distinguish two cases.
  
  If~$\dim B$ is even,
  only even elements of the Clifford algebra contribute to the trace.
  Hence, in the asymptotic expansion of the heat kernel,
  only terms involving the operator~$A$ an odd number of times will contribute.
  But~$A$ acts as~$\tau\otimes A'$,
  where~$\tau$ denotes the Clifford volume element and~$A'$ commutes with
  Clifford multiplication.
  Hence, we may replace $A$ formally by~$A'$ and the trace by a supertrace,
  so Getzler rescaling is appropriate.

  On the other hand, let~$\dim B$ be odd.
  Then~$\dim X$ is even, and the bundle~$H$ splits as~$H^+\oplus H^-$.
  The splitting is preserved by~$D_B^H$, but~$A$ exchanges the summands.
  Hence, in the asymptotic expansion of the heat kernel,
  only terms involving the operator~$A$ an even number of times will contribute,
  so we have to take the trace on the odd part of the Clifford algebra,
  and Getzler rescaling is again appropriate.

  Either way,
  we perform Getzler rescaling of the Clifford variables~$c_\alpha$,
  and~$A$ is not affected.
  Then the additional terms in the second line of~\eqref{B2.1}
  cause no trouble
  because~$A$ and~$\bigl[\nabla^H_{f_\alpha},A\bigr]$ do not involve
  Clifford multiplication at all.
  Hence, small time convergence follows as in~\cite{BF}.
\end{proof}

\begin{Proposition}\label{B2.P3}
  If~$\dim B$ is even, the effective horizontal operator
  of the adiabatic family of odd signature operators~$(B_{M,\eps})_\eps$
  on~$M$ has vanishing $\eta$-invariant.
\end{Proposition}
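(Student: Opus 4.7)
The plan is to construct a self-adjoint involution $J$ on the Dirac bundle $H\to B$ with $J^2=1$ and $JD_B^{\mathrm{eff}}=-D_B^{\mathrm{eff}}J$; then the spectrum of $D_B^{\mathrm{eff}}$ is symmetric about zero, so $\eta(D_B^{\mathrm{eff}})=0$.

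For the odd signature operator, the bundle $E=\Lambda^\bullet T^*M\otimes\C$ carries two commuting Clifford actions: the usual one $c(v)=v\wedge\punkt-\iota_v$ and a ``right'' action $\hat c(v)=v\wedge\punkt+\iota_v$, coming from viewing $\Lambda^\bullet T^*M$ as $\Cl(T^*M)$ acting on itself. Under the local spinor decomposition $E\cong SM\otimes W$ used in Section~\ref{B1}, $\hat c$ is precisely Clifford multiplication on the twist factor, so the formula of Remark~\ref{B2.R1} becomes
\begin{equation*}
\frac{d}{d\eps}\Bigr|_{\eps=0}\nabla^{E/S,\eps}_{e_i}
=\tfrac12\sum_{j,\gamma}s_{ij\gamma}\,\hat c_j\hat c_\gamma,
\end{equation*}
and hence
\begin{equation*}
D_B^{\mathrm{eff}}
=P_X\biggl(\sum_\alpha c_\alpha\nabla^{p_*E,0}_{f_\alpha}
+\tfrac12\sum_{i,j,\gamma}s_{ij\gamma}\,c_i\hat c_j\hat c_\gamma\biggr)P_X.
\end{equation*}

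Since $\dim B$ is even, the horizontal chirality elements $\tau_B=i^{\dim B/2}\,c_{n+1}\cdots c_m$ and $\hat\tau_B=i^{\dim B/2}\,\hat c_{n+1}\cdots\hat c_m$ are self-adjoint involutions that commute with one another; I would set $J=\tau_B\hat\tau_B$. The verification then splits into two steps. First, $J$ preserves $H=\ker D_X$: the volume element $\tau_B$ is central in $\mathfrak{spin}(T^HM)$ and is $\nabla^{E,0}$-parallel because by~\eqref{B1.3} the limit connection respects the splitting $TM=TX\oplus T^HM$; combined with the parity identity $\tau_Bc_i=c_i\tau_B$ (valid because $\dim B$ is even, so each $c_i$ crosses an even number of factors), this yields $[D_X,\tau_B]=0$, while $\hat\tau_B$ commutes with $D_X$ trivially. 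Second, in each summand of $D_B^{\mathrm{eff}}$ exactly one of the chirality factors anticommutes: in $c_\alpha\nabla^{p_*E,0}_{f_\alpha}$ the single horizontal $c_\alpha$ anticommutes with $\tau_B$ (and $\hat\tau_B$ plays no role), whereas in $c_i\hat c_j\hat c_\gamma$ the single horizontal $\hat c_\gamma$ anticommutes with $\hat\tau_B$ while $\tau_B$ commutes with each of $c_i,\hat c_j,\hat c_\gamma$. The parallelism of both chirality elements under $\nabla^{p_*E,0}$ finishes the anticommutation.

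The one subtle point is globalisation: the splitting $E=SM\otimes W$ requires local spin structures, but the two commuting actions $c,\hat c$ on $\Lambda^\bullet T^*M$ (and therefore $J$) are canonical in view of the identification $\Lambda^\bullet T^*M\cong\Cl(T^*M)$. What needs to be checked carefully is that this canonical right action coincides, up to the signs dictated by the identification of $W$ with $SM^*$, with the twist Clifford multiplication appearing in Remark~\ref{B2.R1}; given this, $JD_B^{\mathrm{eff}}J=-D_B^{\mathrm{eff}}$ together with $J^2=1$ yields the spectral symmetry and hence $\eta(D_B^{\mathrm{eff}})=0$.
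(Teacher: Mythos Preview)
Your argument is correct and is essentially the same as the paper's, just unpacked in Clifford-algebraic language: your involution $J=\tau_B\hat\tau_B$ equals, up to a global sign, the form-degree parity $(-1)^{N^B}$ on $\Omega^\bullet(B;H)$, and the paper simply quotes Dai to say that $B_B^{\mathrm{eff}}$ exchanges even and odd base-forms (i.e.\ anticommutes with $(-1)^{N^B}$), whence the trace of $B_B^{\mathrm{eff}}e^{-t(B_B^{\mathrm{eff}})^2}$ vanishes identically. Your version has the virtue of checking explicitly that the second-fundamental-form correction term $\tfrac12\sum s_{ij\gamma}c_i\hat c_j\hat c_\gamma$ also anticommutes with the involution, which the paper outsources to the reference.
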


\begin{proof}
  The effective horizontal operator~$B_B^{\mathrm{eff}}$ acts on~$\Omega^\bullet(B;H)$
  and exchanges even and odd forms by~\cite[section~4.1]{Dai}.
  Thus, the odd heat kernel~$B_B^{\mathrm{eff}}\,e^{-t(B_B^{\mathrm{eff}})^2}$
  also exchanges even and odd forms, and hence, its trace is zero.
  Hence, the integrand in Proposition~\ref{B2.P2} vanishes.
\end{proof}

\subsection{The Dirac operator as a matrix}\label{B2}
The following sections are inspired by work of Bismut and
Lebeau~\cite[chapter~9]{BL} and Ma~\cite[chapter~5]{Ma}.
We will write operators acting on~$p_*E=\ker D_X\oplus\im D_X$
as matrices of the form
\begin{equation*}
  Y=
  \begin{pmatrix}
    P_X\,Y\,P_X&P_X\,Y\,(1-P_X)\\
    (1-P_X)\,Y\,P_X&(1-P_X)\,Y\,(1-P_X)
  \end{pmatrix}=
  \begin{pmatrix}Y_1&Y_2\\Y_3&Y_4\end{pmatrix}\;,
\end{equation*}
in particular
\begin{equation*}
  \frac1\eps\,D_{M,\eps}=\eps^{-1}
  \begin{pmatrix}
    D_{M,\eps,1}&D_{M,\eps,2}\\
    D_{M,\eps,3}&D_{M,\eps,4}
  \end{pmatrix}=
  \begin{pmatrix}
    D_{B,\eps,1}&D_{B,\eps,2}\\
    D_{B,\eps,3}&\eps^{-1}\,D_X+D_{B,\eps,4}
  \end{pmatrix}\;.
\end{equation*}

\begin{Proposition}\label{B2.P1}
As~$\eps\to 0$,
\begin{enumerate}
\item\label{B2.P1.1} the operator~$D_{B,\eps,1}-D_B^{\mathrm{eff}}$
  is an endomorphism of~$H\to B$ of magnitude~$O(\eps)$, and
\item\label{B2.P1.2} the operators~$D_{B,\eps,2}$ and~$D_{B,\eps,3}$
are uniformly bounded fibrewise smoothing operators of finite rank.
\end{enumerate}
\end{Proposition}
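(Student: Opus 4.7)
The strategy is to substitute the explicit expression~\eqref{B1.2} for $D_{B,\eps}$ into the matrix decomposition and separate the $\eps$-independent part from a bounded $O(\eps)$ remainder. Since $\nabla^{E,\eps}$ is analytic in $\eps$ at $\eps=0$ by Definition~\ref{A3.D1}~\eqref{A3.D1.2}, the connection difference $\nabla^{E/S,\eps}-\nabla^{E/S,0}$ vanishes linearly at $\eps=0$, so $\tfrac1\eps(\nabla^{E/S,\eps}-\nabla^{E/S,0})$ extends to an analytic $\End E$-valued $1$-form at $\eps=0$ with value $\tfrac{d}{d\eps}|_{\eps=0}\nabla^{E/S,\eps}$. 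Combined with the explicit factor of $\eps$ in the third line of~\eqref{B1.2}, this rewrites
\[
  D_{B,\eps}=D_{B,0}+\eps\,R_\eps,\qquad
  D_{B,0}:=\sum_\alpha c_\alpha\Bigl(\nabla^{E,0}_{e_\alpha}-\tfrac12 h_\alpha\Bigr)
  +\sum_i c_i\,\tfrac{d}{d\eps}\Big|_{\eps=0}\nabla^{E/S,\eps}_{e_i},
\]
where $R_\eps\in\Omega^0(M;\End E)$ is a zero-order bundle endomorphism depending analytically on $\eps$, hence uniformly bounded for $\eps\in[0,\eps_0]$.

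For part~\eqref{B2.P1.1}, I would compute $P_X D_{B,0} P_X$ directly. By Proposition~\ref{B2.P0} the projection $P_X$ commutes with each horizontal Clifford multiplication $c_\alpha$, so the horizontal piece of $P_X D_{B,0}P_X$ equals $\sum_\alpha c_\alpha\,P_X(\nabla^{E,0}_{e_\alpha}-\tfrac12 h_\alpha)P_X=\sum_\alpha c_\alpha\,\nabla^H_{f_\alpha}$, matching the first summand of Definition~\ref{A3.D3}; the vertical piece coincides with the second summand there by inspection. Hence $P_X D_{B,0}P_X=D_B^{\mathrm{eff}}$, and
\[
  D_{B,\eps,1}-D_B^{\mathrm{eff}}=\eps\,P_X R_\eps P_X
\]
is a bundle endomorphism of the finite-rank bundle $H$ of operator norm $O(\eps)$, as required.

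For part~\eqref{B2.P1.2}, I would pick a local orthonormal frame $h_1,\dots,h_r$ of $H$, viewed as smooth sections of $E\to M$ over $p^{-1}(U)$, so that $P_X$ has fibrewise Schwartz kernel $\sum_k h_k(x)\otimes h_k(y)^*$. For $\psi\in\Gamma(E)$ write $P_X\psi=\sum_k(a_k\circ p)\,h_k$, where $a_k\in C^\infty(B)$ is given fibrewise by $a_k(b)=\langle\psi,h_k\rangle_{L^2(p^{-1}(b))}$. Expanding via the Leibniz rule,
\[
  D_{B,\eps}\bigl((a_k\circ p)\,h_k\bigr)
  =\sum_\alpha c_\alpha\bigl(e_\alpha(a_k\circ p)\bigr)\,h_k
  +(a_k\circ p)\,D_{B,\eps}h_k,
\]
and the first summand lies in $\Gamma(H)$ because $c_\alpha h_k=P_X c_\alpha h_k\in\Gamma(H)$, so it is annihilated by $1-P_X$. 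Therefore $D_{B,\eps,3}\psi=\sum_k(a_k\circ p)\,(1-P_X)D_{B,\eps}h_k$ is a fibrewise integral operator of rank at most $r$ with smooth kernel $\sum_k\bigl((1-P_X)D_{B,\eps}h_k\bigr)(x)\otimes h_k(y)^*$, uniformly bounded in $\eps$ because $D_{B,\eps}h_k$ is analytic at $\eps=0$. An analogous argument on the adjoint, using smoothness of $D_{B,\eps}^*h_k$, handles $D_{B,\eps,2}$.

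The main obstacle is bookkeeping rather than depth: the commutation $[c_\alpha,P_X]=0$ from Proposition~\ref{B2.P0} has to be invoked twice — once to identify $P_X D_{B,0}P_X$ with $D_B^{\mathrm{eff}}$, and once to verify that the Leibniz derivative on the scalar coefficient $a_k$ produces a section of $H$ which $1-P_X$ kills, thereby stripping the horizontal-derivative part out of the off-diagonal blocks. Everything else is subsumed by the analyticity in $\eps$ of the coefficients of $D_{B,\eps}$ together with the finite rank of $H$.
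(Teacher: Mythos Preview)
Your proof is correct and follows essentially the same strategy as the paper: part~\eqref{B2.P1.1} is the same identification of $P_XD_{B,0}P_X$ with $D_B^{\mathrm{eff}}$ via~\eqref{B1.2} and Definition~\ref{A3.D3}, and part~\eqref{B2.P1.2} rests on the same fact that $[c_\alpha,P_X]=0$ forces the off-diagonal blocks to be fibrewise smoothing of finite rank. The only difference is packaging: where you expand $P_X$ in a local frame and apply the Leibniz rule, the paper writes $D_{B,\eps,2}=-[D_{B,\eps},P_X](1-P_X)$ and $D_{B,\eps,3}=(1-P_X)[D_{B,\eps},P_X]$ and observes directly that the commutator of a horizontal first-order operator with a fibrewise smoothing projector is again fibrewise smoothing---a slightly more compact formulation of your Leibniz computation.
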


\begin{proof}
The first claim follows from the Definition~\ref{A3.D3}
of the effective horizontal operator and equation~\eqref{B1.2}.

The projection~$P_X$ is a fibrewise smoothing operator of finite rank.
It commutes with Clifford multiplication~$c_\alpha$ by horizontal vectors.
We conclude from~\eqref{B1.2} that the commutator~$[D_{B,\eps},P_X]$
is again a fibrewise smoothing operator of finite rank.
Now~\eqref{B2.P1.2} follows because
\begin{align*}
  D_{B,\eps,2}
  &=P_X\circ D_{B,\eps}\circ(1-P_X)=-[D_{B,\eps},P_X]\circ(1-P_X)\\
	\text{and}\qquad
  D_{B,\eps,3}
  &=(1-P_X)\circ D_{B,\eps}\circ P_X=(1-P_X)\circ[D_{B,\eps},P_X]
\end{align*}
are uniformly bounded fibrewise smoothing operators of finite rank.
\end{proof}

\subsection{A resolvent estimate}\label{B2a}
Let~$\lambda_B$ denote the smallest absolute value of
a nonzero eigenvalue of the effective horizontal operator~$D_B^{\mathrm{eff}}$,
and let~$0<c<\frac{\lambda_B}2$.
Let~$\Gamma=\Gamma_+\dotcup\Gamma_0\dotcup\Gamma_-$ denote a contour in~$\C$,
where~$\Gamma_\pm$ goes around~$\pm[\lambda_B,+\infty]$ with distance~$c$,
and~$\Gamma_0$ is a circle around~$0$ with radius~$c$.
We choose~$\eps_0>0$ such that Proposition~\ref{B3.P1} is satisfied
and such that all eigenvalues of~$\eps^{-1}D_{M,\eps}$
lie inside the area enclosed by~$\Gamma$ for all~$\eps>0$.
$$\begin{tikzpicture}
  \draw[->] (-5,0) -- (5,0) ; 
  \draw[->] (0,-1.5) -- (0,1.5) ; 
  \draw (-4,0.1) -- (-4,-0.1) node[below] {$\scriptstyle-\lambda_B$} ;
  \draw (4,0.1) -- (4,-0.1) node[below] {$\scriptstyle\lambda_B$} ;
  \node[below right] at (1,0) {$\scriptstyle c$} ;
  \begin{scope}[line width=1.5pt]
    \draw (0,1) arc (90:-270:1cm) node[above right] {$\Gamma_0$} ;
    \draw (-5,1) -- (-4,1) node[above] {$\Gamma_-$}
	arc (90:-90:1cm) -- (-5,-1) ;
    \draw (5,1) -- (4,1) node[above] {$\Gamma_+$}
	arc (90:270:1cm) -- (5,-1) ;
    \begin{scope}[dashed]
      \draw (-5,1) -- (-5.3,1) ;
      \draw (-5,-1) -- (-5.3,-1) ;
      \draw (5,1) -- (5.3,1) ;
      \draw (5,-1) -- (5.3,-1) ;
    \end{scope}
  \end{scope}
\end{tikzpicture}$$

For~$\lambda\notin\spec(D_{M,\eps,4})$, we consider the resolvent
	$$R_\eps(\lambda)=\frac{1-P_X}{\lambda-\eps^{-1}D_{M,\eps,4}}\;.$$
We regard the family of Schatten norms on operators acting on~$L^2(E)$,
given by
\begin{equation*}
  \norm A_p=\trace\Bigl((A^*A)^{\tfrac p2}\Bigr)^{\tfrac 1p}
\end{equation*}
for~$1\le p<\infty$, and let~$\norm A_\infty$ denote the operator norm.

\begin{Proposition}\label{B3.P1}
  There exist constants~$C$ and~$\eps_0>0$, such that
  for all~$p>\dim M$, all~$\eps\in(0,\eps_0)$
  and all~$\lambda\in\Gamma$,
  one has
  \begin{align*}
    \norm{R_\eps(\lambda)}_\infty
    &\le C\;,\tag{1}\label{B3.P1.1}\\
    \norm{R_\eps(\lambda)}_\infty
    &\le C\eps\,\abs\lambda\;,\tag{2}\label{B3.P1.2}\\
    \norm{R_\eps(\lambda)}_p
    &\le C\,\abs\lambda\;.\tag{3}\label{B3.P1.3}
  \end{align*}
\end{Proposition}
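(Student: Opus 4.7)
The operator $A_\eps:=\eps^{-1}D_{M,\eps,4}$ is self-adjoint on $(1-P_X)L^2(E)$ and decomposes as $A_\eps=\eps^{-1}D_X+D_{B,\eps,4}$, where $D_X$ has a uniform spectral gap $\mu>0$ on this subspace because $\ker D_X$ is a finite rank vector orbibundle over compact $B$ by the hypothesis in Definition~\ref{A3.D1}. My plan is first to promote this vertical gap to a spectral gap of $A_\eps$ itself, then to read off~\eqref{B3.P1.1} and~\eqref{B3.P1.2} from the self-adjoint resolvent formula, and finally to treat~\eqref{B3.P1.3} via a factorisation through $(1+A_\eps^2)^{1/2}$. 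To obtain the gap, I would square,
\begin{equation*}
  A_\eps^2 = \eps^{-2}D_X^2 + \eps^{-1}\bigl\{D_X,D_{B,\eps,4}\bigr\} + D_{B,\eps,4}^2,
\end{equation*}
and invoke Lemma~\ref{B1.L1} to control the anticommutator as a fibrewise first order operator plus an endomorphism. A Cauchy--Schwarz relative bound then absorbs this cross term into $\eps^{-2}D_X^2$ and yields $A_\eps^2\ge c\,\mu^2/\eps^2$ on $(1-P_X)L^2(E)$ for $\eps$ sufficiently small, so every $\nu\in\spec(A_\eps)$ satisfies $|\nu|\ge c'\mu/\eps$; with $\eps_0$ chosen accordingly, $\Gamma$ is disjoint from $\spec(A_\eps)$.

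Bounds~\eqref{B3.P1.1} and~\eqref{B3.P1.2} then follow from the spectral-theorem identity $\|R_\eps(\lambda)\|_\infty = 1/\operatorname{dist}(\lambda,\spec(A_\eps))$. On the straight portions of $\Gamma_\pm$ the imaginary part equals $\pm c$, so this distance is at least $c$; on the semicircles at $\pm\lambda_B$ and on $\Gamma_0$ it is at least $c'\mu/\eps-\lambda_B-c$, which dwarfs $c$ for small $\eps$. This gives~\eqref{B3.P1.1}. For~\eqref{B3.P1.2}, on the bounded region $|\lambda|\le c'\mu/(2\eps)$ the gap yields $\|R_\eps(\lambda)\|_\infty\le 2\eps/(c'\mu)$, and since $|\lambda|\ge c$ along $\Gamma$ this is dominated by $C\eps|\lambda|$; on the unbounded portion of $\Gamma_\pm$ the estimate~\eqref{B3.P1.1} together with $\eps|\lambda|\ge c'\mu/2$ finishes the job.

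The main obstacle is~\eqref{B3.P1.3}. I would factor
\begin{equation*}
  R_\eps(\lambda)=\bigl((\lambda-A_\eps)^{-1}(1+A_\eps^2)^{1/2}\bigr)\,\bigl((1+A_\eps^2)^{-1/2}(1-P_X)\bigr),
\end{equation*}
bound the first factor in operator norm by $\sup_{\nu\in\spec(A_\eps)}(1+\nu^2)^{1/2}/|\lambda-\nu|\le C(1+|\lambda|)$ via functional calculus, and bound the Schatten $p$-norm of the second factor using Weyl's law for the self-adjoint elliptic operator $A_\eps$ on $(M,g_\eps)$. The delicate point is uniformity in $\eps$: the volume $\operatorname{vol}(M,g_\eps)$ grows like $\eps^{-(m-n)}$, while the spectrum of $A_\eps$ is pushed to absolute values at least $c'\mu/\eps$, so the Weyl asymptotics take the form $|\nu_k(A_\eps)|\gtrsim\max(\mu/\eps,\eps^{-n/m}k^{1/m})$; a direct summation then shows $\|(1+A_\eps^2)^{-1/2}(1-P_X)\|_p$ stays bounded (and in fact vanishes as $\eps\to 0$) for $p>\dim M$. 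Establishing this Weyl-type estimate uniformly in $\eps$, presumably via a Bochner--Lichnerowicz bound for $A_\eps^2$ tailored to the anisotropic metric $g_\eps$, is where I expect the main technical work to lie.
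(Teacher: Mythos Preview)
Your treatment of~\eqref{B3.P1.1} and~\eqref{B3.P1.2} is essentially the paper's: square $A_\eps$, invoke Lemma~\ref{B1.L1} for the anticommutator, absorb the resulting fibrewise first-order term into $\eps^{-2}D_X^2$, and read off a spectral gap of order~$\eps^{-1}$. The paper carries this out more explicitly via the Bochner formula and an auxiliary nonnegative Laplace-type operator, but the idea is the same.

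For~\eqref{B3.P1.3} you take a genuinely different route, and this is where you have left a gap. You propose uniform Weyl asymptotics for $A_\eps$ on the collapsing manifold $(M,g_\eps)$ and acknowledge that establishing such a bound ``is where I expect the main technical work to lie.'' The paper avoids this difficulty altogether. When absorbing the cross term, it does \emph{not} throw away $D_{B,\eps,4}^2$: the final quadratic-form lower bound reads
\[
  1+\eps^{-2}D_{M,\eps,4}^2
  \;\ge\; 1+\tfrac{1-s}{2\eps^2}\,D_X^2+\tfrac{\lambda_0^2}{4\eps^2}-\tfrac{C}{\eps}
          +D_{B,\eps,4}^2\,,
\]
so that $1+A_\eps^2$ dominates a \emph{fixed} (i.e.\ $\eps$-independent) second-order elliptic operator on~$M$ --- the vertical ellipticity comes from $D_X^2$, the horizontal ellipticity from $D_{B,\eps,4}^2$, and the $\eps$-dependent coefficients only improve the bound. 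The variational characterisation of eigenvalues then gives $\nu_k(A_\eps)^2\ge\mu_k$ for all~$k$, where the $\mu_k$ are the eigenvalues of the fixed comparison operator, and $\|R_\eps(i)\|_p\le C$ for $p>\dim M$ follows immediately. A resolvent identity $R_\eps(\lambda)=R_\eps(i)-R_\eps(i)(\lambda-i)R_\eps(\lambda)$ together with~\eqref{B3.P1.1} then yields~\eqref{B3.P1.3}.

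In short: your spectral-gap estimate $A_\eps^2\ge c\mu^2/\eps^2$ discards the horizontal second-order term, which is harmless for the operator-norm bounds but loses precisely the information needed for the Schatten bound. Retaining it gives comparison with a fixed elliptic operator and replaces your proposed uniform Weyl analysis by a one-line min-max argument.
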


\begin{proof}
  For~$\sigma\in\im(1-P_X)$, we have
  \begin{multline}\label{B3.9}
    \norm{(i-\eps^{-1}D_{M,\eps,4})\sigma}_{L^2}^2\\
    =\bigl\<\bigl(1+\eps^{-2}D_X^2+\eps^{-1}[D_X,D_{B,\eps,4}]
	+D_{B,\eps,4}^2\bigr)\sigma,\sigma\bigr\>\;.
  \end{multline}
  The operator~$D_{B,\eps,4}^2$ is selfadjoint with nonnegative spectrum.

  The operator~$D_X^2|_{\im(1-P_X)}$ is a fibrewise differential operator
  of order~$2$,
  hence its spectrum is contained in~$[\lambda_0,\infty)$
  for some~$\lambda_0>0$.
  Let~$\Delta_X$ denote the fibrewise connection Laplacian acting on~$E\to M$,
  and let~$\mathcal R^X$ denote the curvature term in the classical
  Bochner-Lichnerowicz-Weitzenb"ock formula for~$D_X$.
  Write~$A\ge B$ if~$A-B$ is a nonnegative selfadjoint operator.
  Because~$D_X^2\ge\lambda_0^2>0$,
  we find a parameter~$s>0$ such that
  \begin{align}
    \begin{split}\label{B3.10}
      D_X^2-s\Delta_X
      &=(1-s)\,D_X^2+s\Bigl(\frac{\kappa_X}4+\mathcal R^X\Bigr)\\
      &\ge\frac{1-s}2\,D_X^2+\frac{(1-s)\lambda_0^2}2
	-s\norm{\frac{\kappa_X}4+\mathcal R^X}_\infty\\
      &\ge\frac{1-s}2\,D_X^2\ge\frac{(1-s)\lambda_0^2}2>0\;.
    \end{split}
  \end{align}

  By Lemma~\ref{B1.L1},
  the anticommutator
	$$[D_X,D_{B,\eps,4}]=(1-P_X)\,(D_XD_{B,\eps}+D_{B,\eps}D_X)\,(1-P_X)$$
  is the projection of a fibrewise differential operator of order~$1$.
  Write
  \begin{equation*}
    [D_X,D_{B,\eps}]
    =\sum_\nu a_\nu\,\nabla_{V_\nu}+b\;,
  \end{equation*}
  where the~$V_\nu$ are vertical vector fields
  and~$b$ and the~$a_\nu$ are endomorphisms of~$E\to M$ depending on~$\eps$.
  Note that~$V_\nu$, $a_\nu$ and~$b$ are uniformly bounded as~$\eps\to 0$.
  Because~$[D_X,D_{B,\eps}]$ is selfadjoint,
  \begin{equation*}
    \sum_\nu a_\nu\nabla_{V_\nu}
    =\Bigl(\sum_\nu a_\nu\nabla_{V_\nu}\Bigr)^*
    =-\sum_\nu\bigl(a_\nu^*\nabla_{V_\nu}+[\nabla_{V_\nu},a_\nu^*]
	+(\operatorname{div}V_\nu)\,a_\nu^*\bigr)\;.
  \end{equation*}
  Regard the nonnegative generalised fibrewise Laplace operator
  \begin{align*}
    0
    &\le
    s\biggl(\eps^{-1}\nabla+\frac1{2s}\sum_\nu\,\<V_\nu,\punkt\>a^*_\nu\biggr)^*
    \biggl(\eps^{-1}\nabla+\frac1{2s}\sum_\nu\,\<V_\nu,\punkt\>a^*_\nu\biggr)\\
    &=s\eps^{-2}\Delta_X
        +\frac1{4s}\sum_{\mu,\nu}\<V_\mu,V_\nu\>\,a_\mu a^*_\nu\\
    &\qquad
        -\frac1{2\eps}\,\sum_\nu\biggl((a^*_\nu-a_\nu)\,\nabla_{V_\nu}
	+(\operatorname{div}V_\nu)\,a^*_\nu
        +[\nabla_{V_\nu},a^*_\nu]\biggr)\\
    &=s\eps^{-2}\Delta_X+\eps^{-1}\bigl([D_X,D_{B,\eps}]-b\bigr)
        +\frac1{4s}\sum_{\mu,\nu}\<V_\mu,V_\nu\>\,a_\mu a^*_\nu\;.
  \end{align*}
  Because~$b$ acts as a fibrewise endomorphism on~$E\to M$,
  we conclude that
  \begin{equation}\label{B3.11}
    s\eps^{-2}\Delta_X+\eps^{-1}[D_X,D_{B,\eps}]
    \ge -\eps^{-1}C\;.
  \end{equation}
  A similar conclusion still holds if we replace~$D_{B,\eps}$
  by~$D_{B,\eps,4}$ because
	$$[D_X,D_{B,\eps}]-[D_X,D_{B,\eps,4}]
	=D_XD_{B,\eps,3}+D_{B,\eps,2}D_X$$
  is a fibrewise smoothing operator of finite rank
  by Proposition~\ref{B2.P1}~\eqref{B2.P1.2}.

  If we put~\eqref{B3.9}--\eqref{B3.11} together,
  we see that
  \begin{equation}\label{B3.12}
    1+\eps^{-2}D_{M,\eps,4}^2
    \ge1+\frac{1-s}{2\eps^2}\,D_X^2
	+\frac{\lambda_0^2}{4\eps^2}-\frac C\eps
	+D_{B,\eps,4}^2
  \end{equation}
  We immediately find that
  \begin{equation*}
    \norm{R_\eps(i)}_\infty\le C\,\eps\;.
  \end{equation*}

  Hence there exists~$\eps_0>0$ such that for all~$\eps\in(0,\eps_0)$,
  the spectrum of~$\eps^{-1}D_{M,\eps,4}$ is contained
  in~$\R\setminus\eps^{-1}(-c',c')$ for some constant~$c'>0$.
  The first estimate~\eqref{B3.P1.1} follows from our choice of~$\Gamma$.

  We obtain~\eqref{B3.P1.2} from~\eqref{B3.P1.1} and
  \begin{align*}
    \norm{R_\eps(\lambda)}
    &\le\norm{R_\eps(i)-R_\eps(i)\,(\lambda-i)\,R_\eps(\lambda)}\\
    &\le C\eps\,\bigl(1+\abs{\lambda-i}\,C\bigr)\;.
  \end{align*}

  Moreover, \eqref{B3.12} implies that there exists an~$\eps_0>0$
  such that for all~$\eps\in(0,\eps_0)$,
  the operator~$1+\bigl(\eps^{-1}D_X+D_{B,\eps,4}\bigr)^2$
  differs from a fixed selfadjoint second order elliptic operator
  by some selfadjoint operator with nonnegative eigenvalues.
  By the variational characterisation of eigenvalues
  and the definition of the $p$-norm,
  we conclude that
  \begin{equation*}
    \norm{R_\eps(i)}_p
    =\bigl\|\bigl(i-(\eps^{-1}D_X+D_{B,\eps,4})\bigr)^{-1}\bigr\|_p
    \le C\eps
  \end{equation*}
  for all~$\eps\in(0,\eps_0)$ and all~$p>\dim M$.
  By a similar argument as above, the proposition follows
  for all~$\lambda\in\Gamma$.
\end{proof}

In particular,
the resolvent~$R_\eps(\lambda)$
is uniformly bounded and of order~$O(\eps\abs\lambda)$
for all~$\lambda\in\Gamma$ as~$\eps\to 0$.
We write~$R_\eps(\lambda)=O(1,\eps\abs\lambda)$.
In particular,
we may extend this operator by~$0$ for~$\eps=0$.

\subsection{The Schur complement}\label{B2b}
To compute the full resolvent of~$\eps^{-1}D_{M,\eps}$,
we consider the Schur complement~$M_\eps(\lambda)$
of~$\lambda-\eps^{-1}D_{M,\eps,4}$
in the matrix representation of section~\ref{B2}.
The Schur complement is given by
\begin{equation*}
  M_\eps(\lambda)
  =\lambda-D_{B,\eps,1}-D_{B,\eps,2}\circ R_{\eps}(\lambda)\circ D_{B,\eps,3}\;.
\end{equation*}

\begin{Proposition}\label{B3.P0}
  There exists~$\eps_0>0$ small such that
  for all~$\eps\in(0,\eps_0)$ and all~$\lambda\in\Gamma$,
  the operator~$M_\eps(\lambda)$ is invertible.
  Moreover, there exists~$C>0$ such that for all~$p>\dim M$,
  \begin{align*}
    \norm{M_\eps(\lambda)^{-1}}_\infty
    &\le C\;,\tag{1}\label{B3.P0.1}\\
    \norm{M_\eps(\lambda)^{-1}-(\lambda-D_B^{\mathrm{eff}})^{-1}}_\infty
    &\le C\,\min\bigl(1,\eps\abs\lambda\bigr)\;,\tag{2}\label{B3.P0.2}\\
    \norm{M_\eps(\lambda)^{-1}}_p
    &\le C\,\abs\lambda\;\tag{3}\label{B3.P0.3}\\
    \norm{(\lambda-D_B^{\mathrm{eff}})^{-1}}_p
    &\le C\,\abs\lambda\;,\tag{4}\label{B3.P0.4}
  \end{align*}
\end{Proposition}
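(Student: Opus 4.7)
The plan is to combine the block matrix formula of Section~\ref{B2} with the resolvent estimates of Proposition~\ref{B3.P1}, comparing $M_\eps(\lambda)^{-1}$ to the limiting resolvent $(\lambda-D_B^{\mathrm{eff}})^{-1}$ by means of second resolvent identities.

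For invertibility and~\eqref{B3.P0.1}, I would first observe that $\lambda\in\Gamma$ avoids the spectrum of $\eps^{-1}D_{M,\eps}$ by the choice of $\Gamma$, and that $R_\eps(\lambda)$ exists by Proposition~\ref{B3.P1}. The Schur complement formula then gives the key identity $M_\eps(\lambda)^{-1}=P_X\,(\lambda-\eps^{-1}D_{M,\eps})^{-1}\,P_X$. A spectral-gap argument using that the very small eigenvalues are $O(\eps^2)$ while $\Gamma_0$ has radius $c$, and that the remaining eigenvalues lie inside the region enclosed by $\Gamma_\pm$, shows that for $\eps_0$ small $\Gamma$ stays at distance at least $c/2$ from $\spec(\eps^{-1}D_{M,\eps})$; hence $\|(\lambda-\eps^{-1}D_{M,\eps})^{-1}\|_\infty\le 2/c$, and compression by $P_X$ yields~\eqref{B3.P0.1}. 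For~\eqref{B3.P0.4}, by Proposition~\ref{B2.P0} the bundle $H\to B$ is a finite-rank Dirac orbibundle, so up to the bounded zero-order correction analysed in Remark~\ref{B2.R1}, $D_B^{\mathrm{eff}}$ is a Dirac operator on a compact orbifold; standard Weyl asymptotics give $\|(i-D_B^{\mathrm{eff}})^{-1}\|_p<\infty$ for $p>\dim B$, and the factorisation
\begin{equation*}
(\lambda-D_B^{\mathrm{eff}})^{-1}=(i-D_B^{\mathrm{eff}})^{-1}\bigl(1+(i-\lambda)(\lambda-D_B^{\mathrm{eff}})^{-1}\bigr)
\end{equation*}
combined with the trivial bound $\|(\lambda-D_B^{\mathrm{eff}})^{-1}\|_\infty\le 1/c$ on $\Gamma$ yields~\eqref{B3.P0.4}.

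For~\eqref{B3.P0.2}, the tool is the second resolvent identity
\begin{equation*}
M_\eps(\lambda)^{-1}-(\lambda-D_B^{\mathrm{eff}})^{-1}
=M_\eps(\lambda)^{-1}\bigl[(D_B^{\mathrm{eff}}-D_{B,\eps,1})-D_{B,\eps,2}R_\eps(\lambda)D_{B,\eps,3}\bigr](\lambda-D_B^{\mathrm{eff}})^{-1}.
\end{equation*}
By Proposition~\ref{B2.P1}\eqref{B2.P1.1} the first bracketed term is $O(\eps)$ in operator norm; by Proposition~\ref{B2.P1}\eqref{B2.P1.2} together with Proposition~\ref{B3.P1}\eqref{B3.P1.1}--\eqref{B3.P1.2}, $\|D_{B,\eps,2}R_\eps(\lambda)D_{B,\eps,3}\|_\infty\le C\min(1,\eps|\lambda|)$. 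Since $|\lambda|\ge c$ everywhere on $\Gamma$, one has $\eps\le c^{-1}\min(1,\eps|\lambda|)$ there, so the whole bracket is $O(\min(1,\eps|\lambda|))$; flanking it with $\|M_\eps(\lambda)^{-1}\|_\infty\le 2/c$ from~\eqref{B3.P0.1} and with $\|(\lambda-D_B^{\mathrm{eff}})^{-1}\|_\infty\le 1/c$ gives~\eqref{B3.P0.2}. Estimate~\eqref{B3.P0.3} then follows from the triangle inequality applied to $M_\eps(\lambda)^{-1}=(\lambda-D_B^{\mathrm{eff}})^{-1}+(M_\eps(\lambda)^{-1}-(\lambda-D_B^{\mathrm{eff}})^{-1})$, taking the $p$-norm of the first term via~\eqref{B3.P0.4} and, for the second, measuring the rightmost factor of the same resolvent identity in the Schatten $p$-norm and again invoking~\eqref{B3.P0.4}.

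The main technical obstacle is uniformity in both $\eps$ and $\lambda$ on the unbounded part $\Gamma_\pm$: a naive Neumann expansion around $\lambda-D_B^{\mathrm{eff}}$ fails as soon as $|\lambda|$ reaches order $1/\eps$, and the argument only closes because the a priori operator-norm bound~\eqref{B3.P0.1} is derived directly from the full resolvent via the Schur complement formula rather than from perturbation theory.
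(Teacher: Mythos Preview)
Your arguments for~\eqref{B3.P0.2}--\eqref{B3.P0.4} are essentially the paper's, but your proof of invertibility and of~\eqref{B3.P0.1} is circular. You invoke the spectral picture of~$\eps^{-1}D_{M,\eps}$---that the very small eigenvalues are~$O(\eps^2)$ and that the remaining ones lie inside the region enclosed by~$\Gamma_\pm$---in order to bound the full resolvent on~$\Gamma$, and then compress by~$P_X$ via the Schur complement identity. But this spectral picture is not an independent input: in the paper's logic it is a \emph{consequence} of Propositions~\ref{B3.P1} and~\ref{B3.P0}, via the block resolvent formula and Proposition~\ref{B3.P2}. The projection~$P_\eps$ onto the very small eigenspaces is defined only once one knows the resolvent exists on~$\Gamma_0$, and the statement~\eqref{A4.1} carries a forward reference to Section~\ref{B5}, which in turn rests on the present proposition. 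So you cannot cite it here.

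Your Schur complement idea does work cleanly on the straight parts of~$\Gamma_\pm$, where~$\abs{\Im\lambda}=c$: there the full resolvent of the selfadjoint operator~$\eps^{-1}D_{M,\eps}$ exists with norm at most~$1/c$ without any spectral assumption, and compression gives~$\norm{M_\eps(\lambda)^{-1}}_\infty\le 1/c$. The trouble is on the bounded arcs---the circle~$\Gamma_0$ and the semicircular caps of~$\Gamma_\pm$---which cross the real axis; near those crossings~$\abs{\Im\lambda}$ gives no uniform control of the full resolvent. The paper handles precisely this bounded region by the Neumann series you dismissed: since~$\abs\lambda\le 2\lambda_B$ there, Propositions~\ref{B2.P1} and~\ref{B3.P1} give~$M_\eps(\lambda)-(\lambda-D_B^{\mathrm{eff}})=O(\eps)$ uniformly, and since~$\Gamma$ stays at distance~$c$ from~$\spec(D_B^{\mathrm{eff}})$, the series~\eqref{B2.10} converges for~$\eps$ small. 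Combining the two regimes yields~\eqref{B3.P0.1}, after which your arguments for~\eqref{B3.P0.2}--\eqref{B3.P0.4} go through.
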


\begin{proof}
  By Propositions~\ref{B2.P1} and~\ref{B3.P1},
	$$M_\eps(\lambda)=\lambda-D_B^{\mathrm{eff}}+O(1,\eps\abs\lambda)\;.$$
  As~$D_{B,\eps,1}+D_{B,\eps,2}\circ R_{\eps}(\lambda)\circ D_{B,\eps,3}$ is a
  selfadjoint operator, its spectrum is contained in~$\R$,
  and~$M_\eps(\lambda)$ is invertible
  with~$\norm{M_\eps(\lambda)}\le\frac 1c$ for all~$\lambda\in\Gamma$
  with~$\Im\lambda=\pm ic$.

  The remaining~$\lambda\in\Gamma$ satisfy~$\abs\lambda\le 2\lambda_B$,
  so the remainder term~$M_\eps(\lambda)-\lambda-D_B^{\mathrm{eff}}$ is a bounded
  endomorphism of~$H$ with operator norm uniformly of order~$O(\eps)$.
  Then in particular,
  the series
  \begin{equation}\label{B2.10}
    M_\eps(\lambda)^{-1}
    =\frac1{\lambda-D_B^{\mathrm{eff}}}\sum_{k=0}^\infty\biggl(
	\bigl((\lambda-D_B^{\mathrm{eff}})-M_\eps(\lambda)\bigr)
		\,\frac1{\lambda-D_B^{\mathrm{eff}}}\biggr)^k
  \end{equation}
  converges if~$\eps>0$ is small enough.
  This proves invertibility of~$M_\eps(\lambda)$.
  Together with the above, we obtain~\eqref{B3.P0.1}.

  We deduce~\eqref{B3.P0.2} from~\eqref{B3.P0.1} and our choice of~$\Gamma$
  in section~\ref{B2a} because
  \begin{multline*}
    M_\eps(\lambda)^{-1}-(\lambda-D_B^{\mathrm{eff}})^{-1}\\
    =(\lambda-D_B^{\mathrm{eff}})^{-1}\,
	\bigl((\lambda-D_B^{\mathrm{eff}})-M_\eps(\lambda)\bigr)\,M_\eps(\lambda)^{-1}
    =O(1,\eps\abs\lambda)\;.
  \end{multline*}

For~\eqref{B3.P0.3},
we use that~$\norm{(i-D_B^{\mathrm{eff}})^{-1}}_p\le C$.
Moreover
\begin{align*}
  \norm{M_\eps(\lambda)^{-1}}_p
  &=\norm{(i-D_B^{\mathrm{eff}})^{-1}
	-(i-D_B^{\mathrm{eff}})^{-1}\,\bigl(M_\eps(\lambda)-(i-D_B^{\mathrm{eff}})\bigr)
		\,M_\eps(\lambda)^{-1}}_p\\
  &\le\norm{(i-D_B^{\mathrm{eff}})^{-1}}_p\\
  &\qquad
	+\norm{(i-D_B^{\mathrm{eff}})^{-1}}_p\,\norm{M_\eps(\lambda)-(i-D_B^{\mathrm{eff}})}_\infty
		\,\norm{M_\eps(\lambda)^{-1}}_\infty\\
  &\le C\,\bigl(1+(\abs{\lambda-i}+O(1,\eps\abs\lambda))\,C\bigr)\;.
\end{align*}
The last estimate~\eqref{B3.P0.4} is similar.
\end{proof}

We can now write the resolvent of~$\eps^{-1}D_{M,\eps}$ as
\begin{multline*}
  \frac1{\lambda-\eps^{-1}D_{M,\eps}}\\
  \begin{aligned}
    &=
    \begin{pmatrix}
      M_\eps(\lambda)^{-1}&
      M_\eps(\lambda)^{-1}\,D_{B,\eps,2}\,R_{\eps}(\lambda)\\
      R_{\eps}(\lambda)\,D_{B,\eps,3}\,M_\eps(\lambda)^{-1}&
      \;R_\eps(\lambda)+
      R_{\eps}(\lambda)\,D_{B,\eps,3}\,M_\eps(\lambda)^{-1}
	\,D_{B,\eps,2}\,R_{\eps}(\lambda)
    \end{pmatrix}\\
    &=\begin{pmatrix}\frac1{\lambda-D_B^{\mathrm{eff}}}&0\\0&R_\eps(\lambda)\end{pmatrix}
	+O(1,\eps\abs\lambda)\;.
  \end{aligned}
\end{multline*}
The remainder terms consist of
the resolvent of~$D_B^{\mathrm{eff}}$ and one or more of
the following finite-rank endomorphisms of~$p_*E$,
\begin{gather*}
  \bigl((\lambda-D_B^{\mathrm{eff}})-M_\eps(\lambda)\bigr)
	\,\frac1{\lambda-D_B^{\mathrm{eff}}}\;,\\
  D_{B,\eps,2}\,R_{\eps}(\lambda)\;,\\
  R_{\eps}(\lambda)\,D_{B,\eps,3}\;.
\end{gather*}
the behaviour of which is described in Propositions~\ref{B2.P1}--\ref{B3.P0}.
We summarize the results of this section.

\begin{Proposition}\label{B3.P2}
  There exist constants~$C$ and~$\eps_0>0$ such that
  for all~$p>\dim M$, all~$\eps\in(0,\eps_0)$,
  and all~$\lambda\in\Gamma$,
  one has
  \begin{gather*}
    \norm{(\lambda-\eps^{-1}D_{M,\eps})^{-1}}_\infty
    \le C\;,\qquad
    \norm{(\lambda-D_B^{\mathrm{eff}})^{-1}}_\infty
    \le C\;,\tag{1}\label{B3.P2.1}\\
    \norm{(\lambda-\eps^{-1}D_{M,\eps})^{-1}}_p
    \le C\,\abs\lambda\;,\qquad
    \norm{(\lambda-D_B^{\mathrm{eff}})^{-1}}_p
    \le C\,\abs\lambda\;,\tag{2}\label{B3.P2.2}\\
    \norm{(\lambda-\eps^{-1}D_{M,\eps})^{-1}-(\lambda-D_B^{\mathrm{eff}})^{-1}}_\infty
	\le C\eps\,\abs\lambda\;.\tag{3}\label{B3.P2.3}
  \end{gather*}
\end{Proposition}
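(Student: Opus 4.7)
The plan is to assemble all three estimates by block-bookkeeping from the matrix identity for $(\lambda-\eps^{-1}D_{M,\eps})^{-1}$ written down just before the proposition, and then invoking the estimates already proved in Propositions~\ref{B2.P1}, \ref{B3.P1} and~\ref{B3.P0}. The bounds on $(\lambda-D_B^{\mathrm{eff}})^{-1}$ in \eqref{B3.P2.1} and \eqref{B3.P2.2} are essentially free: $D_B^{\mathrm{eff}}$ is selfadjoint on sections of the finite-rank orbibundle $H\to B$, and the contour $\Gamma$ was chosen precisely so as to stay at distance $\ge c$ from $\spec(D_B^{\mathrm{eff}})$, whence the $\infty$-bound; the $p$-bound is just Proposition~\ref{B3.P0}~\eqref{B3.P0.4}. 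So the real content lies in the bounds on $(\lambda-\eps^{-1}D_{M,\eps})^{-1}$ itself.

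For \eqref{B3.P2.1} and \eqref{B3.P2.2}, I would go through the four blocks of the matrix formula in turn. The $(1,1)$-block is $M_\eps(\lambda)^{-1}$, already handled by Proposition~\ref{B3.P0}~\eqref{B3.P0.1} and \eqref{B3.P0.3}. The $(2,2)$-block is $R_\eps(\lambda)+R_\eps(\lambda)D_{B,\eps,3}M_\eps(\lambda)^{-1}D_{B,\eps,2}R_\eps(\lambda)$, where $R_\eps(\lambda)$ is controlled by Proposition~\ref{B3.P1} in both $\infty$- and $p$-norm and the inner factors $D_{B,\eps,2}$, $D_{B,\eps,3}$ are uniformly bounded fibrewise smoothing operators by Proposition~\ref{B2.P1}~\eqref{B2.P1.2}. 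The off-diagonal blocks are of the form $M_\eps(\lambda)^{-1}D_{B,\eps,2}R_\eps(\lambda)$ and its adjoint-type partner. All four bounds reduce to combining these inputs with the submultiplicativity $\|AB\|_p\le\|A\|_\infty\|B\|_p$.

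Estimate \eqref{B3.P2.3} is the comparison to the unperturbed resolvent; here I regard $(\lambda-D_B^{\mathrm{eff}})^{-1}$ as an operator on $p_*E$ by extending it by $0$ on $\im(1-P_X)$. Then the difference
\[
\frac{1}{\lambda-\eps^{-1}D_{M,\eps}}-\frac{1}{\lambda-D_B^{\mathrm{eff}}}
\]
splits into four blocks: the $(1,1)$-block is $M_\eps(\lambda)^{-1}-(\lambda-D_B^{\mathrm{eff}})^{-1}$, controlled by Proposition~\ref{B3.P0}~\eqref{B3.P0.2} together with the elementary observation that $\min(1,\eps|\lambda|)\le\eps|\lambda|$ for $\eps|\lambda|\le 1$ and $\min(1,\eps|\lambda|)=1\le\eps|\lambda|$ otherwise; the off-diagonal blocks involve a factor $R_\eps(\lambda)$ multiplied by uniformly $\infty$-bounded operators, and Proposition~\ref{B3.P1}~\eqref{B3.P1.2} yields the bound $C\eps|\lambda|$; the $(2,2)$-block is $R_\eps(\lambda)$ plus a quartic expression that is $O(\eps|\lambda|)$ for the same reason.

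There is no real obstacle, and the whole proposition is bookkeeping. The only subtlety to watch is that the contour $\Gamma$ has two qualitatively different regions — the compact piece $\Gamma_0$ where $|\lambda|\le c$ and the unbounded rays $\Gamma_\pm$ where $|\lambda|\to\infty$. On $\Gamma_0$ the estimate $\eps|\lambda|\le\eps c$ is small and \eqref{B3.P1.2} dominates, whereas on $\Gamma_\pm$ one must use \eqref{B3.P1.1} for the $\infty$-norm and \eqref{B3.P1.3} for the $p$-norm to get a bound that only grows linearly in $|\lambda|$; the $\min(1,\eps|\lambda|)$-conversion in the $(1,1)$-block comparison is the bridge between the two regimes.
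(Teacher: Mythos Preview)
Your proposal is correct and follows exactly the paper's approach: the proposition is stated in the paper as a summary of the section, with no separate proof, and the block-by-block bookkeeping from the Schur-complement matrix formula together with Propositions~\ref{B2.P1}, \ref{B3.P1} and~\ref{B3.P0} is precisely what the paper intends. Your identification of the $O(1,\eps\abs\lambda)$ remainder structure and the handling of each block matches the paper's displayed decomposition and the list of remainder terms given just before the proposition.
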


In particular, the resolvent of~$\eps^{-1}D_{M,\eps}$ converges
to the resolvent of the effective horizontal operator~$D_B^{\mathrm{eff}}$
in a certain precise sense.

\subsection{Long time convergence}\label{B4}
Define a spectral projection~$P_\eps$ on~$\Gamma(p_*E)$ by
\begin{equation*}
  P_\eps=\frac1{2\pi i}\int_{\Gamma_0}\frac{dz}{z-\eps^{-1}D_{M,\eps}}\;.
\end{equation*}
Then~$P_\eps$ obviously commutes with~$D_{M,\eps}$.
By Proposition~\ref{B3.P2}~\eqref{B3.P2.3} and our choice of~$c$ and~$\Gamma_0$,
we find that~$P_0=\lim_{\eps\to 0}P_\eps$ is the projection onto the kernel
of the effective horizontal operator~$D_B^{\mathrm{eff}}$. 
In particular, $\im P_\eps$ is of constant finite dimension
for all~$\eps>0$ sufficiently small.

\begin{Proposition}\label{B4.P1}
  There exists~$\alpha>0$ such that
  \begin{equation*}
    \lim_{\eps\to 0}\int_{\eps^{\alpha-2}}^\infty\frac1{\sqrt{\pi t}}
	\trace\Bigl((1-P_\eps)\circ\bigl(D_{M,\eps}\,e^{-tD_{M,\eps}^2}\bigr)
		\circ(1-P_\eps)\Bigr)\,dt
    =\eta\bigl(D_B^{\mathrm{eff}}\bigr)\;.
  \end{equation*}
\end{Proposition}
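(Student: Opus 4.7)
The plan is to rescale the time variable and compare with the integral representation of $\eta(D_B^{\mathrm{eff}})$. Substituting $s=\eps^2 t$, the integrand transforms as
$\frac1{\sqrt{\pi t}}\,D_{M,\eps}\,e^{-tD_{M,\eps}^2}\,dt
=\frac1{\sqrt{\pi s}}\,\eps^{-1}D_{M,\eps}\,e^{-s(\eps^{-1}D_{M,\eps})^2}\,ds$,
and the lower limit $\eps^{\alpha-2}$ becomes $\eps^\alpha$. Since $P_\eps$ commutes with $D_{M,\eps}$, the integral in question equals
\begin{equation*}
  \int_{\eps^\alpha}^\infty\frac1{\sqrt{\pi s}}
	\,\trace\Bigl((1-P_\eps)\,\eps^{-1}D_{M,\eps}
		\,e^{-s(\eps^{-1}D_{M,\eps})^2}\Bigr)\,ds\;,
\end{equation*}
while Proposition~\ref{B2.P2} represents $\eta(D_B^{\mathrm{eff}})$ as a convergent integral of $\frac1{\sqrt{\pi s}}\trace\bigl(D_B^{\mathrm{eff}}e^{-s(D_B^{\mathrm{eff}})^2}\bigr)$ over $(0,\infty)$. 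The difference therefore splits as the missing piece $\int_0^{\eps^\alpha}\frac1{\sqrt{\pi s}}\trace\bigl(D_B^{\mathrm{eff}}e^{-s(D_B^{\mathrm{eff}})^2}\bigr)\,ds$, which vanishes as $\eps\to 0$ by the small-time convergence established in Proposition~\ref{B2.P2}, plus a genuine comparison term supported on $s\ge\eps^\alpha$.

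For the comparison term I would use the resolvent representation
\begin{equation*}
  (1-P_\eps)\,\eps^{-1}D_{M,\eps}\,e^{-s(\eps^{-1}D_{M,\eps})^2}
  =\frac1{2\pi i}\int_{\Gamma_\pm}\lambda\,e^{-s\lambda^2}
	\,(\lambda-\eps^{-1}D_{M,\eps})^{-1}\,d\lambda\;,
\end{equation*}
and the analogous identity for $D_B^{\mathrm{eff}}$, so that the trace difference rewrites as a contour integral of the resolvent difference against $\lambda\,e^{-s\lambda^2}$. Inserting the Schur complement decomposition of section~\ref{B2b}, the $H$-block of the resolvent converges to $(\lambda-D_B^{\mathrm{eff}})^{-1}$ with error of order $O(1,\eps|\lambda|)$ in operator norm by Proposition~\ref{B3.P0}~\eqref{B3.P0.2}, while the $(1-P_X)$-block equals $R_\eps(\lambda)$ plus subordinate cross terms controlled by Propositions~\ref{B2.P1} and~\ref{B3.P1}.

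To convert these into trace-class estimates I would combine the operator-norm bounds with the Schatten $p$-bounds in Propositions~\ref{B3.P1}~\eqref{B3.P1.3} and~\ref{B3.P0}~\eqref{B3.P0.3}: factoring off $(\lambda-\eps^{-1}D_{M,\eps})^{-p}$ via H\"older inequalities renders the integrand trace-class while absorbing the operator-norm error. The Gaussian $e^{-s\lambda^2}$ dominates the resulting polynomial in $|\lambda|$, so contour integration in $\lambda$ yields a negative power of $s$, and the subsequent $s$-integration over $[\eps^\alpha,\infty)$ produces a polynomial error of the form $\eps^{1-C\alpha}$ for some constant $C$ depending on $p$ and $\dim M$. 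The $(1-P_X)$-block is even easier: the vertical spectral gap $D_X^2|_{\im(1-P_X)}\ge\lambda_0^2>0$ combined with $s\ge\eps^\alpha$ makes the restriction of $e^{-s(\eps^{-1}D_{M,\eps})^2}$ to $\im(1-P_X)$ superpolynomially small, of order $e^{-\lambda_0^2\eps^{\alpha-2}}$, as soon as $\alpha<2$.

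The main obstacle is to choose the exponent $\alpha>0$ small enough that the polynomial error $\eps^{1-C\alpha}$ tends to zero, while retaining $\alpha<2$ so that the exponential decay from the vertical gap kicks in. Such a choice is possible because $C$ is fixed by the Schatten exponents and the dimensions of $M$ and $B$, whereas $\alpha$ may be chosen arbitrarily small; with any such $\alpha$ both contributions vanish in the limit, and the claimed identity with $\eta(D_B^{\mathrm{eff}})$ follows.
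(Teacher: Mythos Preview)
Your overall strategy---rescale $t\mapsto\eps^2 t$, compare via a contour integral of resolvents, control the difference in trace norm using Schatten estimates, and choose $\alpha$ small---matches the paper's. But two steps are not justified as written.

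First, the phrase ``factoring off $(\lambda-\eps^{-1}D_{M,\eps})^{-p}$ via H\"older'' hides the real work. A single resolvent is only in the Schatten class~$\mathcal L^p$ for $p>\dim M$, never trace class, so the contour integral of $\lambda e^{-s\lambda^2}$ against the resolvent difference cannot be estimated in~$\mathcal L^1$ directly. The paper's device is to integrate by parts $k$ times in~$\lambda$: introducing antiderivatives $F_{k,t}^\pm$ with $\tfrac{d^k}{d\lambda^k}F_{k,t}^\pm(\lambda)=\lambda e^{-t\lambda^2}$ replaces $(\lambda-A)^{-1}$ by $(\lambda-A)^{-k-1}$, which for $k>\dim M$ is trace class by H\"older and Proposition~\ref{B3.P2}~\eqref{B3.P2.2}. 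This step is essential and not optional; once you spell it out, the error bound becomes $C\eps\,t^{-k-3/2}e^{-ct}$ and the correct constraint is $0<\alpha<\tfrac1{k+2}$ rather than the vague $\eps^{1-C\alpha}$.

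Second, the claimed superpolynomial decay $e^{-\lambda_0^2\eps^{\alpha-2}}$ on the $(1-P_X)$-block is not valid as stated. The projector $1-P_X$ does \emph{not} commute with $D_{M,\eps}$ (only $P_\eps$ does), so ``the restriction of $e^{-s(\eps^{-1}D_{M,\eps})^2}$ to $\im(1-P_X)$'' is not a spectral restriction, and the vertical gap for $D_X$ alone says nothing about it. The paper sidesteps this by not splitting into blocks at all at this stage: it uses the consolidated estimate Proposition~\ref{B3.P2}~\eqref{B3.P2.3}, which already packages the convergence of the full resolvent of $\eps^{-1}D_{M,\eps}$ to that of $D_B^{\mathrm{eff}}$ (including the vanishing of the $(1-P_X)$-contribution) in a form directly compatible with the $\mathcal L^1$ argument above.

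With those two repairs---integration by parts in the contour variable, and Proposition~\ref{B3.P2} in place of the block-by-block treatment---your argument becomes the paper's proof.
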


\begin{proof}
  By Proposition~\ref{B2.P2},
  we may write
  $$\eta(D_B^{\mathrm{eff}})
    =\int_0^\infty\frac1{\sqrt{\pi t}}\,\trace
	\Bigl((1-P_0)\circ D_B^{\mathrm{eff}}\,e^{-t(D_B^{\mathrm{eff}})^2}\circ(1-P_0)\Bigr)\,dt\;,$$
  because~$P_0$ projects onto the kernel of~$D_B^{\mathrm{eff}}$.

  We rewrite the integral on the left hand side in the Proposition as
  \begin{multline*}
    \int_{\eps^{\alpha-2}}^\infty\frac1{\sqrt{\pi t}}
	\trace\Bigl((1-P_\eps)\circ\bigl(D_{M,\eps}\,e^{-tD_{M,\eps}^2}\bigr)
		\circ(1-P_\eps)\Bigr)\,dt\\
    =\int_{\eps^{\alpha}}^\infty\frac1{\sqrt{\pi t}}
	\trace\Bigl((1-P_\eps)
		\circ\bigl(\eps^{-1}D_{M,\eps}\,e^{-t\eps^{-2}D_{M,\eps}^2}\bigr)
		\circ(1-P_\eps)\Bigr)\,dt\;.
  \end{multline*}
  Using dominated convergence,
  we will show that this integral converges to~$\eta(D_B^{\mathrm{eff}})$
  as~$\eps\to 0$.

  For~$t>0$ and each integer~$k\ge 0$,
  we define two holomorphic functions~$F_{k,t}^+$, $F_{k,t}^-\colon\C\to\C$ with
  \begin{equation*}
    \frac{d^k}{dz^k}F^{\pm}_{k,t}(z)
    =z\,e^{-tz^2}
	\qquad\text{and}\qquad
    \lim_{z\to\pm\infty}F^\pm_{k,t}(z)=0\;.
  \end{equation*}
  Then obviously
  \begin{equation}\label{B3.13}
    F^{\pm}_{k,t}(z)=t^{-\tfrac{k+1}2}\,F^{\pm}_{k,1}\bigl(\sqrt t\,z\bigr)\;.
  \end{equation}

  By holomorphic functional calculus,
  \begin{multline}\label{B3.14}
    (1-P_\eps)\circ\Bigl(\eps^{-1}D_{M,\eps}\,e^{-t\eps^{-2}D_{M,\eps}^2}\Bigr)
	\circ(1-P_\eps)\\
    \begin{aligned}
      &=\frac1{2\pi i}\int_{\Gamma_+\dotcup\Gamma_-}
	\frac{z\,e^{-tz^2}}{z-\eps^{-1}\,D_{M,\eps}}\,dz\\
      &=\frac1{2\pi i\,k!}\int_{\Gamma_+}F^+_{k,t}(z)
	\,\bigl(z-\eps^{-1}\,D_{M,\eps}\bigr)^{-k-1}\,dz\\
      &\qquad+\frac1{2\pi i\,k!}\int_{\Gamma_-}F^-_{k,t}(z)
	\,\bigl(z-\eps^{-1}\,D_{M,\eps}\bigr)^{-k-1}\,dz\;.
    \end{aligned}
  \end{multline}
  A similar expression holds for
  \begin{equation*}
    D_B^{\mathrm{eff}}\,e^{-t(D_B^{\mathrm{eff}})^2}
    =(1-P_0)\circ\Bigl(D_B^{\mathrm{eff}}\,e^{-t(D_B^{\mathrm{eff}})^2}\Bigr)\circ(1-P_0)\;.
  \end{equation*}

  By the H\"older inequality, $\norm{X^p}_1\le\norm X_p^p$.
  We choose~$k>\dim M+1$.
  By Proposition~\ref{B3.P2}~\eqref{B3.P2.2} and~\eqref{B3.P2.3},
  there exist constants~$C$ varying from line to line such that
  \begin{multline}\label{B3.15}
    \biggl\|\frac1{2\pi i\,k!}\int_{\Gamma_\pm}F^\pm_{k,t}(z)
	\,\Bigl(\bigl(z-\eps^{-1}\,D_{M,\eps}\bigr)^{-k-1}
		-\bigl(z-D_B^{\mathrm{eff}}\bigr)^{-k-1}\Bigr)\,dz\biggr\|_1\\
    \begin{aligned}
      &\le C\,\int_{\Gamma_\pm}F^\pm_{k,t}(z)\sum_{j=0}^k
	\norm{\bigl(z-\eps^{-1}\,D_{M,\eps}\bigr)^{-1}}_k^j\\
      &\kern10em
        \cdot\norm{\bigl(z-\eps^{-1}\,D_{M,\eps}\bigr)^{-1}
		-\bigl(z-D_B^{\mathrm{eff}}\bigr)^{-1}}_\infty\\
      &\kern12em
        \cdot\norm{\bigl(z-D_B^{\mathrm{eff}}\bigr)^{-1}}_k^{k-j}\,dz\\
      &\le C\eps\,\int_{\Gamma_\pm}F^\pm_{k,t}(z)\,\abs z^{k+1}\,dz\;.
    \end{aligned}
  \end{multline}
  A similar estimate also holds for the integral over~$\Gamma_-$.
  Equation~\eqref{B3.15} clearly implies that
  \begin{multline}\label{B3.16}
    \lim_{\eps\to 0}\trace\Bigl((1-P_\eps)\circ
	\bigl(\eps^{-1}D_{M,\eps}\,e^{-t\eps^{-2}D_{M,\eps}^2}\bigr)
	\circ(1-P_\eps)\Bigr)\\
    =\trace\Bigl(D_B^{\mathrm{eff}}\,e^{-t(D_B^{\mathrm{eff}})^2}\Bigr)\;.
  \end{multline}

  Let~$\mu$ denote the arc length measure on~$\Gamma$.
  Using~\eqref{B3.13} and~\eqref{B3.15}, we estimate
  \begin{multline}\label{B3.18}
    \norm{(1-P_\eps)\circ
		\Bigl(\eps^{-1}D_{M,\eps}\,e^{-t\eps^{-2}D_{M,\eps}^2}\Bigr)
		\circ(1-P_\eps)
        -\Bigl(D_B^{\mathrm{eff}}\,e^{-t(D_B^{\mathrm{eff}})^2}\Bigr)}_1\\
    \begin{aligned}
      &\le C\eps\int_{\Gamma^\pm}\abs{F_{k,t}^\pm(z) z^{k+1}}\,d\mu(z)\\
      &\le C\eps t^{-k-1}\int_{\Gamma^\pm}
	\abs{F_{k,1}^\pm\bigl(\sqrt t\,z\bigr)
		\cdot\bigl(\sqrt t\,z\bigr)^{k+1}}\,d\mu(z)\\
      &\le C\eps t^{-k-\frac32}\int_{\sqrt t\Gamma^\pm}\abs{F_{k,1}^\pm(z)\, z^{k+1}}\,d\mu(z)
        \le C\eps t^{-k-\frac32}\,e^{-ct}\;.
    \end{aligned}
  \end{multline}

  Choose~$0<\alpha<\frac1{k+2}$.
  For~$\eps^\alpha\le t$, \eqref{B3.18} implies
  \begin{multline*}
    \frac1{\sqrt t}\,\norm{(1-P_\eps)\circ
		\Bigl(\eps^{-1}D_{M,\eps}\,e^{-t\eps^{-2}D_{M,\eps}^2}\Bigr)
		\circ(1-P_\eps)
        -\Bigl(D_B^{\mathrm{eff}}\,e^{-t(D_B^{\mathrm{eff}})^2}\Bigr)}_1\\
    \le Ct^{\tfrac1\alpha-k-2}e^{-ct}\;.
  \end{multline*}
  Because~$t$ occurs with positive exponent in the last line,
  the integral of the right hand side above over~$(0,\infty)$
  converges,
  and we may apply dominated convergence and~\eqref{B3.16}
  to complete the proof.
\end{proof}

\subsection{The very small eigenvalues}\label{B5}
We now want to estimate the contribution of the finite
dimensional vector space~$\im P_\eps$.
The operator~$P_\eps\circ\eps^{-1}D_{M,\eps}\circ P_\eps$ depends holomorphically
on~$\eps$,
so its eigenvalues are given by analytic functions~$\lambda_\nu$ in~$\eps$.
In particular,
we may choose~$\eps_0$ in section~\ref{B2} such that
\begin{equation*}
  \dim\ker\bigl(P_\eps\circ\eps^{-1}D_{M,\eps}\circ P_\eps\bigr)
  =\dim\ker D_{M,\eps}
\end{equation*}
is constant for all~$\eps\in(0,\eps_0]$.
By Proposition~\ref{B2.P1}~\eqref{B2.P1.1},
we have~$\lambda_\nu(\eps)=O(\eps)$,
and by the above,
the sign of~$\lambda_\nu(\eps)$ does not change on~$(0,\eps_0]$.

\begin{Proposition}\label{B5.P1}
  For~$0<\eps<\eps_0$, we have
  \begin{equation*}
    \lim_{\eps\to 0}\int_{\eps^{\alpha-2}}^\infty\frac1{\sqrt{\pi t}}
	\,\trace\Bigl(P_\eps\circ\bigl(D_{M,\eps}\,e^{-tD_{M,\eps}^2}\bigr)
		\circ P_\eps\Bigr)\,dt\\
    =\sum_{\nu=1}^{\dim\ker D_B^{\mathrm{eff}}}\sign(\lambda_\nu(\eps))\;.
  \end{equation*}
\end{Proposition}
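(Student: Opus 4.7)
The plan is to exploit that $P_\eps$ is a finite-rank spectral projection commuting with $D_{M,\eps}$. On the finite-dimensional subspace $\im P_\eps$ the operator $D_{M,\eps}$ is diagonalisable, with eigenvalues precisely the very small eigenvalues $\lambda_1(\eps),\dots,\lambda_N(\eps)$, where $N = \dim\im P_\eps = \dim\ker D_B^{\mathrm{eff}}$ by the convergence $P_\eps\to P_0$ established at the start of section~\ref{B4}. First I would rewrite the integrand as a finite sum,
\[
\trace\bigl(P_\eps\circ D_{M,\eps}\,e^{-tD_{M,\eps}^2}\circ P_\eps\bigr)
= \sum_{\nu=1}^{N} \lambda_\nu(\eps)\,e^{-t\lambda_\nu(\eps)^2},
\]
which reduces the statement to a family of elementary one-dimensional integral evaluations.

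For each $\nu$ with $\lambda_\nu(\eps)\neq 0$, the substitution $u = t\lambda_\nu(\eps)^2$ yields
\[
\int_{\eps^{\alpha-2}}^\infty \frac{\lambda_\nu(\eps)\,e^{-t\lambda_\nu(\eps)^2}}{\sqrt{\pi t}}\,dt
= \sign(\lambda_\nu(\eps))\int_{\eps^{\alpha-2}\lambda_\nu(\eps)^2}^\infty \frac{e^{-u}}{\sqrt{\pi u}}\,du.
\]
By Proposition~\ref{B2.P1}~\eqref{B2.P1.1} we have $\lambda_\nu(\eps) = O(\eps)$, so the lower limit $\eps^{\alpha-2}\lambda_\nu(\eps)^2 = O(\eps^\alpha)$ tends to $0$ as $\eps\to 0$. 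Since $e^{-u}/\sqrt{\pi u}$ is integrable on $(0,\infty)$ with total mass $\Gamma(1/2)/\sqrt\pi = 1$, each such integral converges to $\sign(\lambda_\nu(\eps))$. Terms with $\lambda_\nu(\eps) = 0$ contribute zero on both sides, since the integrand and $\sign(\lambda_\nu(\eps))$ both vanish identically. Summing over $\nu$ gives the asserted identity.

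The only point requiring care is that $\sign(\lambda_\nu(\eps))$ is independent of $\eps\in(0,\eps_0]$, but this is already recorded at the start of section~\ref{B5} using the real-analyticity of the $\lambda_\nu$ in $\eps$ together with the choice of $\eps_0$ ensuring that no $\lambda_\nu$ changes sign on $(0,\eps_0]$. There is no genuine obstacle in this argument: once the trace on $\im P_\eps$ is diagonalised, the proposition reduces to the elementary scaling behaviour of the half-line Gaussian integral that defines the $\eta$-contribution of a single real eigenvalue, together with the observation that the very small eigenvalues are small enough to make the lower integration limit negligible in the adiabatic limit.
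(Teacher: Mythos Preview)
Your proof is correct and follows essentially the same route as the paper's: diagonalise on the finite-dimensional range of~$P_\eps$, reduce to a sum of elementary integrals, and use the substitution~$u=t\lambda_\nu(\eps)^2$ to see that each term tends to~$\sign(\lambda_\nu(\eps))$. The paper inserts a preliminary rescaling~$t\mapsto\eps^{-2}t$ before diagonalising, but this is cosmetic; one small slip is that with your convention (\,$\lambda_\nu(\eps)$ an eigenvalue of~$D_{M,\eps}$ rather than of~$\eps^{-1}D_{M,\eps}$\,) the bound is~$\lambda_\nu(\eps)=O(\eps^2)$ rather than~$O(\eps)$, which only strengthens your estimate on the lower limit.
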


\begin{proof}
  We have
  \begin{multline*}
    \int_{\eps^{\alpha-2}}^\infty\frac1{\sqrt{\pi t}}
	\,\trace\Bigl(P_\eps\circ\bigl(D_{M,\eps}\,e^{-tD_{M,\eps}^2}\bigr)
		\circ P_\eps\Bigr)\,dt\\
    \begin{aligned}
      &=\int_{\eps^{\alpha}}^\infty\frac1{\sqrt{\pi t}}
	\,\trace\Bigl(P_\eps
		\circ\bigl(\eps^{-1}D_{M,\eps}
			\,e^{-t\eps^{-2}D_{M,\eps}^2}\bigr)
		\circ P_\eps\Bigr)\,dt\\
      &=\sum_{\nu=1}^{\dim\ker D_B^{\mathrm{eff}}}
	\int_{\eps^\alpha}^\infty\frac{\lambda_\nu(\eps)}{\sqrt{\pi t}}
		\,e^{-t\lambda_\nu(\eps)^2}\,dt\\
      &=\sum_{\nu=1}^{\dim\ker D_B^{\mathrm{eff}}}\sign(\lambda_\nu(\eps))
	+O\Bigl(\eps^{\tfrac\alpha2}\Bigr)\;.\quad\qedhere
    \end{aligned}
  \end{multline*}
\end{proof}

\subsection{Short time convergence}\label{B6}
Let~$\alpha>0$ denote the constant introduced in Proposition~\ref{B4.P1}
and consider
\begin{equation*}
  \int_0^{\eps^{\alpha-2}}\frac1{\sqrt{\pi t}}\,
	\trace\Bigl(D_{M,\eps}\,e^{-tD_{M,\eps}^2}\Bigr)\,dt\;.
\end{equation*}
We treat the limit of this integral as~$\eps\to0$
as in~\cite{BC1} and~\cite{Dai}.
Over the singular strata of~$B$,
we get additional contributions involving equivariant $\eta$-forms,
see Definition~\ref{A3.D2} of the orbifold $\eta$-forms.

\begin{Proposition}\label{B6.P1} For~$\alpha>0$ sufficiently small,
  we have
  \begin{equation*}
    \lim_{\eps\to 0}\int_0^{\eps^{\alpha-2}}\frac1{\sqrt{\pi t}}
	\,\trace\Bigl(D_{M,\eps}\,e^{-tD_{M,\eps}^2}\Bigr)\,dt
    =\int_{\Lambda B}\hat A_{\Lambda B}\bigl(TB,\nabla^{TB}\bigr)
		\,2\eta_{\Lambda B}(\A)\;.
  \end{equation*}
\end{Proposition}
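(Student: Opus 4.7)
The plan is to follow the short-time analysis of Bismut-Cheeger~\cite{BC1} and Dai~\cite{Dai}, extended to the orbifold setting by the fixed-point heat-kernel localization used in Kawasaki's index theorem~\cite{Kawa}. The substitution $t\mapsto\eps^2 t$ rewrites the left-hand side as $\int_0^{\eps^\alpha}(\pi t)^{-1/2}\trace(\eps^{-1}D_{M,\eps}\,e^{-t\eps^{-2}D_{M,\eps}^2})\,dt$, and the proposition will follow once we (i) identify the pointwise $\eps\to 0$ limit of the rescaled integrand on $(0,\infty)$, (ii) dominate it uniformly in $\eps$, and (iii) show that extending the upper limit from $\eps^\alpha$ to $\infty$ costs only $o(1)$.

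For (i), I would localize over a cover of $B$ by orbifold charts $\psi\colon\Gamma\backslash V\to U$ and write the trace as $\frac{1}{\abs\Gamma}\sum_{\gamma\in\Gamma}$ of equivariant traces on the pulled-back fibration $\tilde M\to V$. As $\eps\to 0$ each summand localizes by a Lefschetz-type argument onto the fixed-point set $V^\gamma$, and on $V^\gamma$ one performs Getzler rescaling of the horizontal Clifford variables tangent to $V^\gamma$ while leaving the normal directions unscaled. The rescaled operator $\eps^{-1}D_{M,\eps}$ is, up to lower-order errors, conjugate to Bismut's Levi-Civita superconnection $\A_t$ acting on $p_*E$; using the local splitting $\gamma=\tilde\gamma^W\tilde\gamma$ of~\eqref{A3.4}, the normal directions produce the denominator $\ch_{\tilde\gamma}(S^+N_\gamma-S^-N_\gamma,\nabla^{SN_\gamma})$ of~\eqref{A2.2}, the tangential directions produce $\hat A(TV^\gamma,\nabla^{TV^\gamma})$, and the vertical trace produces $\trace_{p_*W}(\tilde\gamma^W\,\del_t\A_t\,e^{-\A_t^2})$. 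Combined with the multiplicity weight~\eqref{A2.0} and the factor of $2$ noted after Definition~\ref{A3.D2}, this assembles into precisely the integrand of $\hat A_{\Lambda B}(TB,\nabla^{TB})\cdot 2\eta_{\Lambda B}(\A)$ on $\Lambda B$.

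For (ii) and (iii), on non-identity strata $\gamma$ acts freely on the fibres of $p$, so the equivariant fibrewise heat kernel decays exponentially uniformly in $t$, giving both small- and large-$t$ domination and making the error from replacing $\eps^\alpha$ by $\infty$ exponentially small. On the identity stratum one is in the situation of~\cite{BC1}: small-$t$ convergence comes from the local index computation there, large-$t$ convergence follows from the spectral gap assumption~\eqref{A3.D1.3} in Definition~\ref{A3.D1}, and the truncation error from $(\eps^\alpha,\infty)$ is $O(\eps^{\alpha/2})$ by standard Gaussian heat-kernel bounds.

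The main obstacle, and the genuinely new point compared to~\cite{BC1} and~\cite{Dai}, will be to establish the Getzler-rescaling limit uniformly on each stratum $V^\gamma$, showing that the rescaled heat kernel of $\eps^{-1}D_{M,\eps}$ converges to a harmonic-oscillator model in the normal directions to $V^\gamma$ tensored with $e^{-\A_t^2}$ in the fibre. This requires a Duhamel expansion that controls the error terms from the anticommutator $[D_X,D_{B,\eps}]$ (a fibrewise first-order operator plus an endomorphism by Lemma~\ref{B1.L1}) and from the $\eps$-dependence of $\nabla^{E/S,\eps}$, uniformly in $t\in(0,\eps^\alpha)$; the necessary uniform Schatten-norm bounds are supplied by the resolvent estimates of Proposition~\ref{B3.P1} together with a finite-propagation argument confining the heat kernel to a geodesic neighbourhood of $V^\gamma$, where the geometry is effectively that of the cover times the standard normal fibre.
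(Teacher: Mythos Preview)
Your substitution~$t\mapsto\eps^2 t$ is the wrong rescaling for this proposition: it is the move the paper makes for the \emph{long-time} piece (Proposition~\ref{B4.P1}), not the short-time one. After your substitution the upper limit becomes~$\eps^\alpha\to 0$, so you are integrating over a shrinking interval, and the pointwise limit of the rescaled integrand on~$(0,\infty)$ is~$\trace\bigl(D_B^{\mathrm{eff}}e^{-t(D_B^{\mathrm{eff}})^2}\bigr)$ (cf.\ equation~\eqref{B3.16}), not the $\eta$-form density. In particular your step~(iii), ``extending the upper limit from~$\eps^\alpha$ to~$\infty$ costs only~$o(1)$'', is exactly wrong: that tail is precisely~$\eta(D_B^{\mathrm{eff}})+\sum_\nu\sign\lambda_\nu(\eps)$ by Propositions~\ref{B4.P1} and~\ref{B5.P1}.

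The correct setup, which the paper follows, is to make \emph{no} substitution in~$t$: the upper limit~$\eps^{\alpha-2}\to\infty$ as~$\eps\to 0$, and for each fixed~$t$ one rescales the base coordinates and the Clifford variables tangent to~$V^\gamma$ by~$\eps\sqrt t$ (not by~$\eps$ alone). The point is that the heat kernel of~$e^{-tD_{M,\eps}^2}$ has horizontal width~$\sim\eps\sqrt t$ because~$g^{TM}_\eps|_{T^HM}=\eps^{-2}p^*g^{TB}$; sending~$\eps\to 0$ with~$t$ fixed then localizes in the base and produces the superconnection heat kernel~$e^{-\A_t^2}$, with the \emph{same}~$t$ surviving in~$\A_t$. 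Integrating the resulting pointwise limit over~$t\in(0,\infty)$ yields the $\eta$-form by its very definition~\eqref{A3.3}. The paper implements this via the auxiliary Grassmann variable~$z$ and the conjugated operator~$H_{\eps,t}$ of~\cite{BC1}, which converts the odd trace~$\trace(D_{M,\eps}e^{-tD_{M,\eps}^2})$ into a genuine heat trace~\eqref{B6.0} to which Getzler rescaling applies; you do not mention this device, and without it the rescaling argument has no operator to act on. Once the time variable is handled correctly, the remainder of your outline---localization over orbifold charts, the sum over~$\gamma\in\Gamma$ in~\eqref{B6.1}, Getzler rescaling only tangent to~$V^\gamma$, and the equivariant~$\hat A$-form emerging from the normal directions---matches the paper's argument; uniform convergence is supplied by~\cite[Theorem~3.1]{Dai} rather than by the resolvent estimates of Proposition~\ref{B3.P1}, which belong to the long-time analysis.
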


\begin{proof}
  We introduce an exterior variable~$z$ that anticommutes
  with the Clifford multiplication~$c$
  and is parallel with respect to~$\nabla^{E,\eps}$ for all~$\eps$.
  In analogy with~\eqref{B2.0}, consider the connection
  \begin{equation*}
    \nabla^{E,\eps,z}=\nabla^{E,\eps}-z\,c(\punkt)\;.
  \end{equation*}
  We will use the $g^{TM}_\eps$-orthonormal frame~$e^\eps_I$ of~\eqref{A3.0}.
  Then as in~\eqref{B2.1}, the Bochner-Lichnerowicz-Weitzenb\"ock formula
  implies
  \begin{equation*}
    D_{M,\eps}^2+2z\,D_{M,\eps}
    =\nabla^{E,\eps,z,*}\nabla^{E,\eps,z}+\frac\kappa 4
	+\frac14\sum_{I,J}c_Ic_J\,F^{E/S}_{e^\eps_{I},e^\eps_{J}}\;.
  \end{equation*}
  Define~$\trace_z$ as in the proof of Proposition~\ref{B2.P2},
  then as in~\eqref{B2.2},
  \begin{equation}\label{B6.0}
    \trace\Bigl(D_{M,\eps}\,e^{-tD_{M,\eps}^2}\Bigr)
    =\frac1t\,\trace_z\Bigl(e^{-t(D_{M,\eps}^2-zD_{M,\eps})}\Bigr)\;.
  \end{equation}

  From now on, we assume that~$t\le\eps^{\alpha-2}$
  for some small~$\alpha>0$.
  We fix~$q\in B$ and choose an orbifold
  chart~$\psi\colon\rho(\Gamma)\backslash V\to U\subset B$ with~$q=\psi(0)$
  and a local trivialisation~$\bar\psi\colon\Gamma\backslash(V\times X)
  \to p^{-1}(U)$ as in Definition~\ref{A1.D2}.
  We assume that~$\psi$ defines geodesic coordinates,
  and that~$\bar\psi$ is the trivialisation
  by horizontal lifts of radial geodesics.
  By parallel transport along these geodesics
  with respect to~$\nabla^{E,\eps}$,
  we also identify~$E|_{p^{-1}(V)}$ with~$E|_X\times V$.

  As explained in~\cite{Dai}, section~3.1,
  to compute the $z$-trace of the heat kernel over~$q$,
  we may assume that~$V=\R^{m-n}$, and that outside
  a suitably large compact subset, the metric on~$V$
  is flat and the geometry of the fibration is of product type.
  It is possible to perform all these modifications
  in a $\Gamma$-invariant way.

  Let~$v$ denote the $V$-coordinates of a point in~$V\times X$.
  As in~\cite{BC1}, we consider the operator
  \begin{equation*}
    H_{\eps,t}
    =\biggl(1+\frac{zc(v)}{2\eps\sqrt t}\biggr)
	\,\bigl(tD_{M,\eps}^2+2z\sqrt t\,D_{M,\eps}\bigr)
	\,\biggl(1-\frac{zc(v)}{2\eps\sqrt t}\biggr)\;.
  \end{equation*}
  In the trivialisations above, let
  \begin{equation*}
    \tilde k_{\eps,t}((v,x),(v',x'))\colon E_{x'}\to E_x
  \end{equation*}
  denote the heat kernel of the operator~$e^{-H_{\eps,t}}$ on~$V\times X$.
  The corresponding heat kernel~$k_{\eps,t}$ on~$\Gamma\backslash(V\times X)$
  then lifts to
  \begin{equation*}
    k_{\eps,t}([v,x],[v',x'])
    =\sum_{\gamma\in\Gamma}\tilde k_{\eps,t}((v,x),\gamma(v',x'))\circ\gamma
    \colon E_{x'}\to E_x\;.
  \end{equation*}
  Thus, we have
  \begin{equation*}
    \trace_z\bigl(k_{\eps,t}([v,x],[v,x])\bigr)
    =\sum_{\gamma\in\Gamma}\trace_z\bigl(\tilde k_{\eps,t}((v,x),\gamma(v,x))
	\circ\gamma\bigr)\;.
  \end{equation*}
  We will consider the contribution of each~$\gamma\in\Gamma$ over~$V$
  to the overall trace of~$e^{-t(D_{M,\eps}^2+z\,D_{M,\eps})}$ separately
  in the limit~$\eps\to0$.
  Moreover,
  \begin{multline}\label{B6.1}
    \int_{\Gamma\backslash(V\times X)}\trace_z\bigl(k_{\eps,t}([v,x],[v,x])\bigr)
	\,d(v,x)\\
    =\frac1{\#\Gamma}\int_{V\times X}
	\sum_{\gamma\in\Gamma}\trace_z\bigl(\tilde k_{\eps,t}((v,x),\gamma(v,x))
	\circ\gamma\bigr)\,d(v,x)
  \end{multline}
  because each point~$[v,x]\in\Gamma\backslash(V\times X)$
  has~$\#\Gamma$ different preimages in~$V\times X$.

  For a fixed~$\gamma\in\Gamma$,
  let~$V_\gamma\subset V$ denote the fixpoint set of~$\gamma$,
  which is a linear subspace of~$V$.
  Let~$N_\gamma$ denote its orthogonal complement.
  Because we have assumed that~$B$ is orientable, $\dim N_\gamma$ is even.
  Put
	$$m_\gamma=m-\dim N_\gamma\;.$$
  The action of~$\gamma$ on~$E|_X$ can be decomposed as
  \begin{equation}\label{B6.2}
    \gamma=\tilde\gamma^{E/SB}\circ\tilde\gamma^{SB}
  \end{equation}
  such that~$\gamma^{SB}$ is an element in the Clifford algebra of~$N_\gamma$
  and~$\gamma^{E/SB}$ commutes with Clifford multiplication with horizontal
  vectors,
  and this decomposition is unique up to sign.

  As~$\eps\to 0$,
  we will rescale~$v\in V$ by a factor~$\eps\sqrt t$.
  We will apply Getzler rescaling by~$\eps\sqrt t$
  only to Clifford multiplication with elements of~$V_\gamma$,
  whereas Clifford multiplication with elements of~$N_\gamma$ and~$TX$
  will not be rescaled.
  Let us denote the complete rescaling by~$G_{\gamma,\eps}$.
  In particular, the action of~$\gamma$ commutes with~$G_{\gamma,\eps}$.

  We choose the basis in section~\ref{B1} such that~$f_{n+1}$,
  \dots, $f_{m_\gamma}$ are tangent to~$V_\gamma$.
  Let~$\eps^\alpha$ denote exterior multiplication with~$dv^\alpha$.
  For~$I\in\{1,\dots,m\}$, define
  \begin{equation*}
    \mu_I=
    \begin{cases}
      c_I&\text{if~$1\le I\le n$ or~$m_\gamma<I\le m$, and}\\
      t^{-\tfrac12}\,\eps^I&\text{if~$n<I\le m_\gamma$.}
    \end{cases}
  \end{equation*}
  Bismut's Levi-Civita superconnection can be defined as the operator
  \begin{equation*}
    \mathbb A_t=\sqrt t\,D_X+\nabla^{p_*E,0}
	-\frac{\sqrt t}4\sum_{i\alpha\beta}t_{\alpha\beta i}
		\,\mu_i\mu_\alpha\mu_\beta\;.
  \end{equation*}
  Then as in~\cite{BC1},
  we can compute the limit of the rescaled operator~$H_{\eps,t}$ as
  \begin{align}
    \begin{split}\label{B6.3}
    \lim_{\eps\to 0}G_{\gamma,\eps}(H_{\eps,t})
    &=-t\biggl(\nabla_{e_i}
	+\frac14\sum_{J,K=1}^{m_\gamma}s_{iJK}\mu_I\mu_J-t^{-\tfrac12}zc_i\biggr)^2\\
    &\qquad
        -\biggl(\frac{\del}{\del_\alpha}
	+\frac18\,\bigl\<R^B|_{V_\gamma}e_\alpha,v\bigr\>\biggr)^2\\
    &\qquad
        +\sum_{I,J=1}^{m_\gamma}t\,R^{E/S,0}_{e_I,e_J}\mu_I\mu_J+t\frac{\kappa_X}4\\
    &=\biggl(\mathbb A_t^2+tz\,\frac{d\mathbb A_t}{dt}\biggr)
	-\biggl(\frac{\del}{\del_\alpha}
		+\frac18\,\bigl\<R^B|_{V_\gamma}e_\alpha,v\bigr\>\biggr)^2\;.
    \end{split}
  \end{align}
  Both operators on the right hand side have coefficients
  in~$\Lambda^\bullet(V^\gamma)^*$.
  The operator~$\mathbb A_t^2+tz\,\tfrac{d\mathbb A_t}{dt}$
  acts on~$\Gamma(E\to X)$ and commutes with Clifford multiplication
  by horizontal vectors,
  while~$\bigl(\tfrac{\del}{\del_\alpha}+\tfrac18\,\bigl\<R^B|_{V_\gamma}e_\alpha,v\bigr\>\bigr)^2$
  acts on~$\Omega^\bullet(V)$.

  Let~$SB$ be a local spinor bundle on~$V$,
  then there exists a fibrewise Dirac bundle~$W\to M$ as in~\eqref{A3.4}.
  We continue as in~\cite{BC1},
  using the heat kernel proof of the equivariant index theorem
  in order to conclude that on~$V\times X$,
  \begin{multline*}
    \lim_{\eps\to0}\int_{V\times X}
	\trace_z\bigl(\tilde k_{\eps,t}((v,x),\gamma(v,x))\circ\gamma\bigr)
	d(v,x)\\
    \begin{aligned}
      &=\int_V\trace_{SB}\bigl(k_V(v,\gamma v)\circ\tilde\gamma^{SB}\bigr)
	\,(2\pi i)^{-\tfrac{N^{V^\gamma}}2}
	\,\trace_{p_*W}\biggl(2t\,\frac{d\mathbb A_t}{dt}\,e^{\mathbb A_t^2}\,\tilde\gamma^{E/SB}\biggr)\,dv\\
      &=\int_{V^\gamma}\hat A_{\tilde\gamma^{SB}}\bigl(TV,\nabla^{TV}\bigr)
	\,(2\pi i)^{-\tfrac{N^{V^\gamma}}2}
	\,\trace_{p_*W}\biggl(2t\,\frac{d\mathbb A_t}{dt}\,e^{\mathbb A_t^2}\,\tilde\gamma^{E/SB}\biggr)\;.
    \end{aligned}
  \end{multline*}

  From~\eqref{B6.1} and the above, we obtain
  \begin{multline*}
    \lim_{\eps\to 0}\int_{\Gamma\backslash(V\times X)}\trace_z\bigl(k_{\eps,t}([v,x],[v,x])\bigr)
	\,d(v,x)\\
    \begin{aligned}
      &=\frac1{\#\Gamma}\sum_{\gamma\in\Gamma}
	\int_{V^\gamma}\hat A_{\tilde\gamma^{SB}}\bigl(TV,\nabla^{TV}\bigr)
		\,(2\pi i)^{-\tfrac{N^{V^\gamma}}2}
		\,\trace_{p_*W}\biggl(2t\,\frac{d\mathbb A_t}{dt}
			\,e^{\mathbb A_t^2}\,\tilde\gamma^{E/SB}\biggr)\\
      &=\sum_{(\gamma)}\frac1{\# C_\Gamma(\gamma)}
	\int_{V^\gamma}\hat A_{\tilde\gamma^{SB}}\bigl(TV,\nabla^{TV}\bigr)
		\,(2\pi i)^{-\tfrac{N^{V^\gamma}}2}
		\,\trace_{p_*W}\biggl(2t\,\frac{d\mathbb A_t}{dt}
			\,e^{\mathbb A_t^2}\,\tilde\gamma^{E/SB}\biggr)\\
      &=\sum_{(\gamma)}
	\int_{C_\Gamma(\gamma)\backslash V^\gamma}\psi_{(\gamma)}^*
		\hat A_{\Lambda B}\bigl(TB,\nabla^{TB}\bigr)
		\,(2\pi i)^{-\tfrac{N^{V^\gamma}}2}
		\,\trace_{p_*W}\biggl(2t\,\frac{d\mathbb A_t}{dt}
			\,e^{\mathbb A_t^2}\,\tilde\gamma^{E/SB}\biggr)\\
    \end{aligned}
  \end{multline*}
  in analogy with the index computations in~\cite{Kawa}.
  By~\eqref{B6.0} and the above, we have the global formula
  \begin{multline*}
    \lim_{\eps\to 0}\trace\Bigl(\sqrt t\,D_{M,\eps}\,e^{-tD_{M,\eps}^2}\Bigr)\\
    =\int_{\Lambda B}\hat A_{\Lambda B}\bigl(TB,\nabla^{TB}\bigr)
	\,2
	\,(2\pi i)^{-\tfrac{N^{V^\gamma}}2}
        \,\trace_{p_*W}\biggl(\frac{d\mathbb A_t}{dt}
		\,e^{\mathbb A_t^2}\,\tilde\gamma^{E/SB}\biggr)\;.
  \end{multline*}
  By Theorem~3.1 of~\cite{Dai},
  we have uniform convergence as~$\eps\to 0$.
  By~\eqref{A3.3} and Definition~\ref{A3.D2},
  \begin{align*}
    \lim_{\eps\to0}\int_0^{\eps^{\alpha-2}}\frac1{\sqrt{\pi t}}
	\trace\Bigl(D_{M,\eps}e^{-tD_{M,\eps}^2}\,\gamma\Bigr)\,dt
    &=\int_{\Lambda B}\hat A_{\Lambda B}\bigl(TB,\nabla^{TB}\bigr)
	\,2\eta_{\Lambda B}(\mathbb A)\;.\qedhere
  \end{align*}
\end{proof}

\begin{Remark}\label{B6.R1}
  We replace the vector bundle~$E$ by~$E\otimes p^*W$,
  where~$W\to B$ is a vector orbibundle.
  We also assume that the twist connection~$\nabla^{(E\otimes p^*W)/S,0}$
  splits as the tensor product connection of~$\nabla^{E/S,0}$ and~$p^*\nabla^W$
  in the limit~$\eps\to 0$.
  A relevant special case is the case of the signature operator on~$M$,
  where the (local) spinor bundle of~$B$ plays the role of~$W$.
  In this case,
  equation~\eqref{B6.3} becomes
	$$\lim_{\eps\to 0}G_{\gamma,\eps}(H_{\eps,t})
	=\biggl(\A_t^2+tz\,\frac{d\A_t}{dt}\biggr)^2
		-\biggl(\frac{\del}{\del_\alpha}
			+\frac18\,\<R^B|_{V_\gamma}e_\alpha,v\>\biggr)^2
		+p^*R^W\;.$$
  This implies that now,
  \begin{multline*}
    \lim_{\eps\to 0}\int_0^{\eps^{\alpha-2}}\frac1{\sqrt{\pi t}}
	\trace\Bigl(D_{M,\eps}e^{-tD_{M,\eps}^2}\,\gamma\Bigr)\,dt\\
    =\int_{\Lambda B}\hat A_{\Lambda B}\bigl(TB,\nabla^{TB}\bigr)
	\,2\eta_{\Lambda B}(\mathbb A)\,\ch_{\Lambda B}(W,\nabla^W)\;.
  \end{multline*}
\end{Remark}

\section{The Spaces \texorpdfstring{$P_k$}{Pk}
and \texorpdfstring{$M_{(p_-,q_-),(p_+,q_+)}$}{M(p-,q-)(p+,q+)}}
\label{C}
We consider the family~$M_{(p_-,q_-),(p_+,q_+)}$ of manifolds
with a cohomogeneity one action of~$G=\Sp(1)\times\Sp(1)$ 
that are described in~\cite[chapter~13]{GWZ}.
This family contains the spaces~$P_k=M_{(1,1),(2k-1,2k+1)}$ as well as the Berger
space~$\SO(5)/\SO(3)=M_{(3,1),(1,3)}$.
The manifolds~$M_{(p_-,q_-),(p_+,q_+)}$ are two-connected with finite cyclic
third homotopy group, so by~\cite{C} and~\cite{CG},
it suffices to compute the Eells-Kuiper invariants and the modified
Kreck-Stolz invariants for quaternionic line bundles of~\cite{CG} to determine
the diffeomorphism type.

\subsection{Construction as Manifolds of Cohomogeneity One}\label{C1}

Let~$(p_+,q_+)$ and~$(p_-,q_-)$ be two pairs of relative prime
positive odd integers. We regard the subgroup
\begin{multline}\label{C1.1}
  H=\Bigl\{\pm (1,1),
	\pm \Bigl(i,(-1)^{\tfrac{q_--p_-}2}i\Bigr),
	\pm \Bigl(j,(-1)^{\tfrac{q_+-p_+}2}j\Bigr),\\
	\pm \Bigl(k,(-1)^{\tfrac{q_-+q_+-p_--p_+}2}k\Bigr)
	\Bigr\}\subset G=\Sp(1)\times\Sp(1)\;,
\end{multline}
which is isomorphic (in fact conjugate) to the diagonal subgroup~$\Delta Q$,
with
\begin{equation*}
  Q=\{\pm1,\pm i,\pm j,\pm k\}\subset\Sp(1)\;.
\end{equation*}

If~$a\in S^2\subset\H$ is an imaginary unit quaternion
and~$p$, $q$ are relative prime odd integers as above,
we consider the subgroup
\begin{equation*}
  C^a_{(p,q)}=\bigl\{\,(e^{ap\thet},e^{aq\thet})\bigm|\thet\in\R\,\bigr\}
  \subset G=\Sp(1)\times\Sp(1)\;,
\end{equation*}
which is isomorphic to~$S^1$.
For an odd integer~$2l+1$,
we have~$e^{\frac{a(2l+1)\pi}2}=(-1)^la$.
This implies that
\begin{equation*}
  \Bigl\{\pm \Bigl(a,(-1)^{\tfrac{p-q}2}a\Bigr)\;,\;\pm(1,1)\Bigr\}
  \subset C^a_{(p,q)}\;.
\end{equation*}

We put
\begin{equation}\label{C1.2}
  K_-=C^i_{(p_-,q_-)}\cdot H\qquad\text{and}\qquad
  K_+=C^j_{(p_+,q_+)}\cdot H\quad\subset\quad G\;.
\end{equation}
Then in particular~$H=K_-\cap K_+$,
and we have isomorphisms
\begin{align*}
  K_-&=C^i_{(p_-,q_-)}\cup
	\Bigl(j,(-1)^{\tfrac{q_+-p_+}2}j\Bigr)\,C^i_{(p_-,q_-)}
  \cong\Pin(2)\\\text{and}\qquad
  K_+&=C^j_{(p_+,q_+)}\cup
	\Bigl(i,(-1)^{\tfrac{q_--p_-}2}i\Bigr)\,C^j_{(p_+,q_+)}
  \cong\Pin(2)\;.
\end{align*}
The actions of~$K_\pm$ on~$S^1\cong K_+/H\cong K_-/H$ are $\R$-linear.

We now consider the cohomogeneity one manifolds~$M_{(p_-,q_-),(p_+,q_+)}$
with group diagram
\begin{equation}\label{C1.3}
  \begin{CD}
    &&G\\
    &\;\;\nearrow\;\;&&\;\;\nwarrow\;\;\\
    K_-&\cong&\!\Pin(2)\!&\cong&K_+\quad.\\
    &\;\;\nwarrow\;\;&&\;\;\nearrow\;\;\\
    &&H
  \end{CD}
\end{equation}
Thus,
the generic $G$-orbit takes the form~$G/H\cong S^3\times\R P^2/(\Z/2\Z)^2$,
and the two singular orbits are of the form~$M_\pm=G/K_\pm$.
We will study the geometry of~$M_{(p_-,q_-),(p_+,q_+)}$ in section~\ref{M}.

\begin{Theorem}[\cite{GWZ}, Theorem~13.1]
  The manifolds~$M=M_{(p_-,q_-),(p_+,q_+)}$ are two-connected.
  If~$p_-q_+=\pm p_+q_-$, then~$H^3(M)=H^4(M)=\Z$,
  otherwise~$H^3(M)=0$ and~$H^4(M)=\Z/k\Z$
  with~$k=\frac{p_-^2q_+^2-p_+^2q_-^2}8$.
\end{Theorem}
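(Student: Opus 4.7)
The plan is the standard one for cohomogeneity one manifolds. Since the isotropy quotients $K_\pm/H \cong S^1$ are circles, the normal slice at each singular orbit is a $2$-disk, so $M$ splits as $M = D(\nu_-) \cup_{G/H} D(\nu_+)$ where $D(\nu_\pm) = G \times_{K_\pm} D^2$ is a tubular neighborhood of the singular orbit $M_\pm = G/K_\pm$, and the overlap deformation retracts onto the principal orbit $G/H$.

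First I would identify the pieces. Since $H$ is conjugate to the diagonal $\Delta Q$ in $G = \Sp(1) \times \Sp(1)$, the map $(x,y) \mapsto (xy^{-1}, [y])$ exhibits $G/H \cong S^3 \times L$ with $L = S^3/Q$ the quaternionic spherical space form. Computing $H^*(L)$ (where $\pi_1(L) = Q$, $H^1(L) = 0$, $H^2(L) \cong (\Z/2\Z)^2$, $H^3(L) = \Z$) and applying K\"unneth determines $H^*(G/H)$. For each singular orbit, $K_\pm \cong \Pin(2)$ has identity component the circle $C^a_{(p_\pm,q_\pm)}$, which acts freely on $G$ since $\gcd(p_\pm,q_\pm) = 1$; the cohomology of $M_\pm$ can then be computed via the Gysin sequence of the principal $S^1$-bundle $G \to G/C^a_{(p_\pm,q_\pm)}$ together with the $\Z/2\Z$-quotient $G/C^a_{(p_\pm,q_\pm)} \to M_\pm$.

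For two-connectedness, van Kampen applied to the above decomposition gives $\pi_1(M) = Q/N$, where $N$ is the normal closure of the images of $\pi_1(K_\pm/H)$ in $\pi_1(G/H) = Q$. Under the identification $G/H \cong S^3 \times L$, the two circles $K_\pm/H$ map to loops in $L$ representing elements of $Q$ determined by the weights $(p_\pm, q_\pm)$; an explicit check should show that they jointly normally generate $Q$. Vanishing of $\pi_2(M)$ then follows by Hurewicz combined with low-degree Mayer--Vietoris, using $\pi_2(G/H) = \pi_2(M_\pm) = 0$.

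Finally, $H^3(M)$ and $H^4(M)$ come from the Mayer--Vietoris sequence
\begin{equation*}
H^3(M_-) \oplus H^3(M_+) \to H^3(G/H) \to H^4(M) \to H^4(M_-) \oplus H^4(M_+) \to H^4(G/H).
\end{equation*}
The connecting maps are governed by the Euler classes $e(\nu_\pm) \in H^2(M_\pm)$ of the normal $D^2$-bundles, which in turn are determined by the isotropy weights of $K_\pm$ on the normal slice, i.e.\ by $(p_\pm,q_\pm)$. The main obstacle will be computing these Euler classes correctly and evaluating the resulting $2 \times 2$ pairing matrix: its determinant should equal $\tfrac18(p_-^2 q_+^2 - p_+^2 q_-^2) = \tfrac18(p_-q_+ - p_+q_-)(p_-q_+ + p_+q_-)$, so that $H^4(M)$ is cyclic of this order when it is nonzero, while a free $\Z$ summand appears in each of $H^3$ and $H^4$ exactly when $p_-q_+ = \pm p_+q_-$. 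The factor $\tfrac18$ should emerge from the combined contributions of $|Q| = 8$ and the $(\Z/2\Z)^2$-torsion noted above.
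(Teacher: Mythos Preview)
The paper does not give its own proof of this theorem; it is quoted verbatim from~\cite{GWZ}, Theorem~13.1. Your outline is the standard cohomogeneity-one argument and is, as far as one can tell from the hints scattered through the paper (see section~\ref{M11}), exactly the approach taken in~\cite{GWZ}: Mayer--Vietoris for the double disk bundle decomposition, with the principal orbit identified via the $8$-fold cover~$\eta\colon S^3\times S^3\to G/H$.

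One concrete comparison is worth making. You phrase the final step as computing Euler classes $e(\nu_\pm)\in H^2(M_\pm)$ and a resulting $2\times 2$ pairing matrix. The paper's account of~\cite{GWZ} packages the same information differently: it records that $\eta^*$ embeds $H^3(G/H)\cong\Z^2$ as the index-$8$ sublattice $\{(a,b)\mid a+b\equiv 0\bmod 8\}\subset\Z^2\cong H^3(S^3\times S^3)$, and that the kernel of the connecting map $\delta\colon H^3(G/H)\to H^4(M)$ pulls back under $\eta^*$ to the sublattice $\langle(-q_-^2,p_-^2),(-q_+^2,p_+^2)\rangle$. The order of $H^4(M)$ is then the index of this sublattice in $\operatorname{im}\eta^*$, namely $\bigl|{\det}\bigl(\begin{smallmatrix}-q_-^2&p_-^2\\-q_+^2&p_+^2\end{smallmatrix}\bigr)\bigr|/8 = |p_-^2q_+^2-p_+^2q_-^2|/8$. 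This is your $2\times 2$ computation done after lifting to the cover, and the factor of $8$ you anticipated arises precisely as $|Q|=[\,\Z^2:\operatorname{im}\eta^*\,]$. So there is no genuine difference of method, only of bookkeeping; the lattice formulation has the mild advantage that it bypasses the explicit description of $H^2(M_\pm)$ and its torsion.
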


\subsection{The \texorpdfstring{$t$}{t}-invariant}\label{C2a}
In this section,
we want to determine the homeomorphism type of the spaces~$P_k$.

In~\cite{C}, Crowley has constructed a quadratic form~$q_M\colon H^4(M)\to\Q/\Z$
for all two-connected closed topological seven-manifolds with finite~$H^4(M)$
satisfieing
\begin{gather*}
  \lk_M(a,b)
  =q_M(a+b)-q_M(a)-q_M(b)\\
  \text{and}\qquad
  \lk_M\Bigl(a,\frac{p_1}2(TM)\Bigr)=q_M(a)-q_M(-a)
\end{gather*}
for all~$a$, $b\in H^4(M)$,
where~$\frac{p_1}2$ denotes the natural refinement of
the first Pontrijagin class~$p_1$ for spin manifolds.
Note that two quadratic forms with the properties above differ
by the pairing with an element of~$H_4(M;\Z/2\Z)$.
Crowley has then proved that two such manifolds~$M_0$, $M_1$
are homeomorphic (in fact almost diffeomorphic) if and only
if~$\bigl(H^4(M_0),q_{M_0}\bigr)$ and~$\bigl(H^4(M_1),q_{M_1}\bigr)$
are isomorphic.

In analogy with the Kreck-Stolz invariants~$s_2$ and~$s_3$ of~\cite{KS},
Crowley and the author have defined an invariant~$t_M(E)\in\Q/\Z$
for a two-connected smooth closed seven-manifolds~$M$
and a quaternionic line bundle~$E\to M$,
such that
	$$q_M\bigl(c_2(E)\bigr)=12\,t_M(E)\;.$$
For each cohomology class~$a\in H^4(M)$ of such a manifold~$M$,
there exist quaternionic line bundles~$E\to M$ with~$c_2(E)$.
We will thus compute~$t_M(E)$ for sufficiently many quaternionic
line bundles~$E\to M$ in order to determine the diffeomorphism type
of the spaces~$M=P_k$.

Let us recall the intrinsic definition of~$t_M(E)$ in~\cite{CG}.
We assume that~$E$ carries a quaternionic Hermitian metric~$g^E$
and a quaternionic Hermitian connection~$\nabla^E$.
Then there is a natural representative~$c_2(E,\nabla^E)\in\Omega^4(M)$
of the class~$c_2(E)$.
If~$H^3_{\mathrm{dR}}(M)=H^4_{\mathrm{dR}}(M)=0$,
there exists a differential form~$\hat c_2(E,\nabla^E)\in\Omega^3(M)$
such that
	$$d\hat c_2\bigl(E,\nabla^E\bigr)
	=c_2\bigl(E,\nabla^E\bigr)\in\Omega^4(M)\;,$$
and~$\hat c_2(E,\nabla^E)$ is unique up to an exact form.
Let~$D$ and~$D^E$ denote the untwisted Dirac operator on~$M$ and the Dirac
operator twisted with~$(E,g^E,\nabla^E)$,
and let~$h(D)=\dim\ker D$.

\begin{Definition}[\cite{CG}]\label{C2.D2}
  For a quaternionic line bundle~$E\to M$ on a compact oriented
  seven-dimensional spin manifold~$M$ with~$H^4_{\mathrm{dR}}(M)=0$,
  put
  \begin{multline*}
    t_M(E)=\frac{\eta+h}4(D^E_M)-\frac{\eta+h}2(D_M)\\
    -\frac1{24}\int_M\Bigl(\frac{p_1}2\bigl(TM,\nabla^{TM}\bigr)
	+c_2\bigl(E,\nabla^E\bigr)\Bigr)\,\hat c_2\bigl(E,\nabla^E\bigr)
    \quad\in\quad\Q/\Z\;.
  \end{multline*}
\end{Definition}

\begin{Theorem}\label{C2.T2}
  Assume that~$p_-$ and~$p_+$ are relatively prime.
  Then there exists an isomorphism~$H^4(M_{(p_-,q_-),(p_+,q_+)})\cong\Z/k\Z$
  such that Crowley's quadratic form~$q_M(\ell)$ for~$\ell\in\Z/k\Z$
  is given by
  \begin{equation*}
    q_M(\ell) 
    =\ell\,\frac{p_+^2-p_-^2+\ell\,p_-^2p_+^2}{2k}
	-\frac \ell2\quad\in\Q/\Z\;.
  \end{equation*}
\end{Theorem}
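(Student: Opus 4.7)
My plan is to compute $q_M(\ell)$ via the identity $q_M(c_2(E))=12\,t_M(E)$ together with the intrinsic formula for $t_M(E)$ in Definition~\ref{C2.D2}. For each $\ell\in H^4(M)\cong\Z/k\Z$, I first need a quaternionic line bundle $E_\ell\to M$ with $c_2(E_\ell)=\ell$. These should arise by applying representations of the stabilisers $K_\pm$ or $H$ to the principal bundle implicit in the cohomogeneity one setup, and tensor powers of one distinguished bundle should suffice to realise every class.

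Next, I would realise $M$ as a Seifert fibration $p\colon M\to B$ over a four-dimensional orbifold (as indicated in~\cite{GWZ}) by quotienting by a subgroup of $G$ whose generic orbits are three-spheres, compatible with the bundles $E_\ell$. Because the fibres inherit a round metric of positive scalar curvature, the fibrewise Dirac operator twisted appropriately is invertible, so the adiabatic limit formula of Corollary~\ref{A5.C1} and Remark~\ref{B6.R1} yields
\begin{equation*}
  \lim_{\eps\to 0}\eta\bigl(D^{E_\ell}_{M,\eps}\bigr)
  =\int_{\Lambda B}\hat A_{\Lambda B}\bigl(TB,\nabla^{TB}\bigr)
    \,\ch_{\Lambda B}\bigl(E_\ell,\nabla^{E_\ell}\bigr)
    \,2\eta_{\Lambda B}(D_{M/B})\;,
\end{equation*}
with no effective-horizontal contribution and no very-small-eigenvalue contribution.

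The right-hand side splits as a contribution from the principal stratum plus contributions from the singular strata of $\Lambda B$, which arise from the circle subgroups $C^i_{(p_-,q_-)}$ and $C^j_{(p_+,q_+)}$ at the two singular orbits $G/K_\pm$. By Theorem~\ref{A6.T1} and the explicit formula of~\cite{Go}, Theorem~3.9, for the fibre $S^3$, both contributions can be written down in closed form, the singular ones taking the shape of generalised Dedekind sums in $(p_\pm,q_\pm)$. Subtracting the untwisted analogue $\eta(D_M)$ and adding the Cheeger-Chern-Simons term $-\tfrac1{24}\int_M(\tfrac{p_1}2+c_2(E_\ell))\,\hat c_2(E_\ell,\nabla^{E_\ell})$ computed with a $G$-invariant connection then yields $t_M(E_\ell)$, and $q_M(\ell)=12\,t_M(E_\ell)$ should simplify to the asserted polynomial in $\ell$. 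The linear correction $-\ell/2$ reflects, via Crowley's identity $\lk_M(a,\tfrac{p_1}2(TM))=q_M(a)-q_M(-a)$, the contribution of $\tfrac{p_1}2(TM)$, and can be verified independently from the explicit description of $TM$ as an equivariant bundle associated with the cohomogeneity one action.

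The main obstacle will be the algebraic simplification of the resulting sums. Each singular stratum contributes a sum of cotangent products indexed by the cyclic isotropy, and the coprimality hypothesis $\gcd(p_-,p_+)=1$ is essential: it allows Dedekind reciprocity (together with the simplifications credited to D.~Zagier in the acknowledgements) to collapse the total contribution into the clean rational function $\ell(p_+^2-p_-^2+\ell\,p_-^2p_+^2)/(2k)$, and ensures that the distinguished line bundle generates $H^4(M)$ so that the formula determines $q_M$ on all of $\Z/k\Z$.
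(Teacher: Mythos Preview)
Your overall strategy---compute $t_M$ via the adiabatic limit and multiply by $12$---is right, but you are overcomplicating the $\eta$-side and misidentifying where the difficulty lies. In the paper the bundles are not built from representations of $K_\pm$ or $H$; instead one fixes a degree-one map $B\to S^4$ and pulls back the standard quaternionic line bundle with $c_2=\ell$ to obtain $E\to B$, choosing $\nabla^E$ flat near the singular strata $B_\pm$. Then $E_\ell=p^*E$. With this choice the entire $\eta$-contribution to $t_M$ vanishes: over the twisted sectors $\ch(E,\nabla^E)-2=0$ because $\nabla^E$ is flat there, and over the principal stratum the integrand is a product of two degree-$4$ forms on a four-dimensional base, so it is zero for degree reasons. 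No Dedekind sums appear at all in this computation; the generalised Dedekind sums and Zagier's simplification belong to the Eells--Kuiper invariant (Theorem~\ref{C2.T1}), not to Theorem~\ref{C2.T2}.

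Consequently $t_M(p^*E)$ reduces purely to the Cheeger--Simons term $-\tfrac1{24}\int_M(\tfrac{p_1}2+c_2)\wedge\hat c_2$, which is a direct fibre-integral computation using the explicit $G$-invariant forms already assembled for the Eells--Kuiper calculation; this yields the stated rational function of $\ell$ with no algebraic cancellation miracles required. The coprimality hypothesis $\gcd(p_-,p_+)=1$ is not used for any reciprocity law; its role is topological: it guarantees that $p^*\colon H^4(B)\to H^4(M)$ is surjective, so that the bundles $p^*E$ realise every class in $H^4(M)\cong\Z/k\Z$ and the formula determines $q_M$ completely.
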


This theorem will be proved in section~\ref{M11}.

\begin{Remark}\label{C2.R3}
  More generally,
  suppose that~$a=(p_-^2,p_+^2)$ and~$b=(q_-^2,q_+^2)$ are the greatest common
  divisors.
  Because~$p_-$ and~$q_-$ are relatively prime, so are~$a$ and~$b$.
  Moreover,
  clearly~$a|k$ and~$b|k$.

  The proof of Theorem~\ref{C2.T2} gives a formula for~$q_M(\ell)$
  if~$a|\ell\in H^4(M)$ by identifying the class~$\ell$ with a class pulled back
  from the base of the Seifert fibration~$p\colon M\to B$
  considered in Proposition~\ref{M1.P1}.
  Swapping the roles of the $p$s and~$q$s gives an analogous formula
  for~$q_M(\ell)$ if~$b|\ell\in H^4(M)$.

  To see that these two formulas determine~$q_M$ uniquely,
  for each~$\ell\in H^4(M)\cong\Z/n\Z$ we find~$x$, $y\in\Z$ such that
	$$\ell=xa^2+yb^2\;.$$
  Because~$q$ refines the linking form,
  we have
  \begin{align*}
    q_M(\ell)&=q_M(xa^2+yb^2)=q_M(xa^2)+q_M(yb^2)+a^2b^2\lk(x,y)\\
    &=q_M(xa^2)+q_M(yb^2)+q_M(ab(x+y))-q_M(abx)-q_M(aby)\;,
  \end{align*}
  and each of the terms on the right hand side is computable.
  The main difficulty consists in determining the respective classes
  in the two base orbifolds.
\end{Remark}

\begin{Example}\label{M11.X1}
  We consider the special case~$P_k=M_{(1,1),(2k-1,2k+1)}$ and obtain
  \begin{equation}
    q_M(\ell)=\ell\,\frac{4k(k-1)+\ell\,(2k-1)^2}{2k}-\frac\ell 2
	=\frac{\ell(\ell-k)}{2k}\quad\mod\Z\;.
  \end{equation}\label{M11.3}
  We compute
  \begin{equation}\label{M11.4}
    \lk(i,j)
    =q_M(i+j)-q_M(i)-q_M(j)
    =\frac{ij}k
  \end{equation}
  which proves that the linking form on~$H^4(P_k)$ is standard
  and that the class represented by~$\ell=1$ is a generator.
  We also see that~$\frac{p_1}2(TP_k)=0$
  because
  \begin{equation}\label{M11.5}
    \lk\Bigl(\frac{p_1}2(TM),\ell\Bigr)
    =q_M(-\ell)-q_M(\ell)
    =\ell=0\quad\mod\Z\;.
  \end{equation}
\end{Example}

\subsection{The Eells-Kuiper invariant}\label{C2}
The Eells-Kuiper invariant~$\mu$ has first been defined in~\cite{EK}
for certain manifolds using zero bordisms.
It distinguishes all exotic spheres in dimension~$7$.
Crowley has shown in~\cite{C} that two homeomorphic two-connected closed smooth
seven-manifolds with finite~$H^4$ are diffeomorphic if and only if their Eells-Kuiper invariants
agree.

We will use the intrinsic description of~$\mu(M)$ by Donnelly~\cite{Do1}
and Kreck and Stolz~\cite{KS}.
Let~$M$ be an oriented spin Riemannian seven-manifold
with~$H^3_{\mathrm{dR}}(M)=H^4_{\mathrm{dR}}(M)=0$,
and let~$D_M$ denote he untwisted Dirac operator on~$M$.
Let~$B_M$ denote the odd signature operator, acting on~$\Omega^\even M$.
Let~$p_1(TM,\nabla^{TM})$ denote the first Pontrijagin form of~$M$,
then there exists a form~$\hat p_1(TM,\nabla^{TM})\in\Omega^3(M)$ such that
	$$d\hat p_1\bigl(TM,\nabla^{TM}\bigr)=p_1\bigl(TM,\nabla^{TM}\bigr)\;,$$
and~$\hat p_1(TM,\nabla^{TM})$ is uniquely determined up to an exact form.
Following~\cite{KS},
the Eells-Kuiper invariant of~$M$ can be computed as
\begin{multline}\label{C2.1}
  \mu(M)=\frac{\eta+h}2(D_M)+\frac\eta{2^5\cdot 7}(B_M)\\
	-\frac1{2^7\cdot 7}\int_M(p_1\wedge\hat p_1)\bigl(TM,\nabla^{TM}\bigr)
  \quad\in\Q/\Z\;.
\end{multline}

We will use Theorem~\ref{Thm1} to compute the $\eta$-invariants
in equation~\eqref{C2.1}.
Again, we make use of the Seifert fibration~$M\to B$
discussed in Proposition~\ref{M1.P1} below.
In analogy with the classical Dedekind sums occurring in the study of
quadratic forms,
we consider a particular family of sums
over rational functions in sines and cosines.
These sums represent the contribution of the twisted sectors of~$B$
to~$\mu(M_{(p_-,q_-),(p_+,q_+)})$.

\begin{Definition}\label{C2.D1}
  If~$p$, $q\in\mathbb N$ are odd and relatively prime,
  define the {\em generalised Dedekind sums\/}
  \begin{equation*}
    D(p,q)
    =\sum_{a=1}^{p-1}\Biggl(
	\frac{14\cos\frac{4\pi a}{p}+\cos^2\frac{q\pi a}{p}}
	{2^4\cdot 7\,p^2\sin^2\frac{4\pi a}{p}\sin^2\frac{q\pi a}{p}}
	+\frac{q\cos\frac{q\pi a}{p}
		\bigl(14+\cos\frac{4\pi a}{p}\bigr)}
	{2^5\cdot 7\,p^2\sin\frac{4\pi a}{p}\sin^3\frac{q\pi a}{p}}
	\Biggr)\;.
  \end{equation*}
\end{Definition}


We will give explicit formulas for some of these sums in the next subsection.

\begin{Theorem}\label{C2.T1}
  We have
  \begin{multline*}
    \ek\bigl(M_{(p_-,q_-),(p_+,q_+)}\bigr)
    =\frac{\sign(q_-^2p_+^2-q_+^2p_-^2)}{2^5\cdot 7}
	+D(p_-,q_-)-D(p_+,q_+)\\
	-\frac{(p_+^2-p_-^2)^2}
		{2^2\cdot 7\,p_-^2p_+^2(q_-^2p_+^2-q_+^2p_-^2)}
	-\frac{2^4(p_+^2-p_-^2)+(q_-^2p_+^2-q_+^2p_-^2)}
		{2^8\cdot 7\,p_-^2p_+^2}\;.
  \end{multline*}
\end{Theorem}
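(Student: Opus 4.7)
The plan is to compute the Eells--Kuiper invariant via the intrinsic formula~\eqref{C2.1}, using the Seifert fibration~$p\colon M\to B$ of Proposition~\ref{M1.P1} and applying Theorem~\ref{Thm1} to the untwisted Dirac operator~$D_{M,\eps}$ and the odd signature operator~$B_{M,\eps}$ in the adiabatic limit. Since~$\ek(M)$ is a diffeomorphism invariant, the full right hand side of~\eqref{C2.1} is independent of the metric, so one is free to pass to the limit~$\eps\to 0$ even though the individual $\eta$-terms and the Chern--Simons correction are not metric-invariant on their own. The generic fibre of~$p$ is an~$S^3$ of positive scalar curvature, hence~$\ker D_X=0$ and Corollary~\ref{A5.C1} gives the limit of~$\eta(D_{M,\eps})$ as a single integral over~$\Lambda B$; for~$B_{M,\eps}$ one applies Corollary~\ref{A5.C2}, where~$\eta(B_B^{\mathrm{eff}})=0$ by Proposition~\ref{B2.P3} since~$\dim B=4$ is even.

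The base orbifold~$B$ is four-dimensional with two orbifold singularities of cyclic isotropy order~$p_-$ and~$p_+$ respectively, coming from the two singular orbits~$G/K_\pm$. The inertia orbifold~$\Lambda B$ decomposes into the regular stratum~$B$ and two zero-dimensional twisted sectors, one for each nontrivial conjugacy class of~$\Z/p_\pm\Z$. On the regular stratum the equivariant $\eta$-forms reduce by Theorem~\ref{A6.T1} to the infinitesimally equivariant $\eta$-invariants of the fibrewise Dirac and signature operators on~$S^3$ with normal metric, whose closed forms are recorded in~\cite[Theorem~3.9]{Go}. When integrated against~$\hat A(TB,\nabla^{TB})$ and~$\hat L(TB,\nabla^{TB})$ in a global frame of the smooth locus of~$B$, and combined with the Pontrijagin--Chern--Simons correction~$-\frac1{2^7\cdot 7}\int_M p_1\wedge\hat p_1$ evaluated in the same frame, this yields the two purely rational summands appearing on the last line of Theorem~\ref{C2.T1}.

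Over each singular point of~$B$ the nontrivial isotropy element~$\gamma$ acts freely on the fibre~$S^3$, so by the final statement of Theorem~\ref{A6.T1} the equivariant $\eta$-forms of the fibration are classical equivariant $\eta$-invariants of~$\gamma$ acting on~$S^3$, computable by Atiyah--Patodi--Singer as trigonometric expressions in the weights~$(p_\pm,q_\pm)$. Summing over~$\gamma\in(\Z/p_\pm\Z)\setminus\{0\}$, combining the contributions from~$D_{M,\eps}$ and~$B_{M,\eps}$ with the normalisations of~\eqref{C2.1}, and using Remark~\ref{A2.R1}~\eqref{A2.R1.4} to fix the relative sign between the two strata with respect to the global orientation of~$M$, produces exactly the difference~$D(p_-,q_-)-D(p_+,q_+)$ of Definition~\ref{C2.D1}. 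Finally, the very small eigenvalue term~$\sum_\nu\sign(\lambda_\nu)$ in Corollary~\ref{A5.C2}, weighted by the~$\frac1{2^5\cdot 7}$ from~\eqref{C2.1}, accounts for the~$\frac{\sign(q_-^2p_+^2-q_+^2p_-^2)}{2^5\cdot 7}$ contribution, reflecting the single higher differential in the Leray--Serre spectral sequence of~$p$ that kills the image of the fibre's fundamental class whenever~$H^4(M)$ is finite.

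I expect the main technical difficulty to be the explicit computation over the singular strata: keeping track of the signs of the spin lifts~$\tilde\gamma$ as in Remark~\ref{A2.R1}~\eqref{A2.R1.3}, the multiplicities~$m(\gamma)$ of~\eqref{A2.0}, the relation between the weights~$(p_\pm,q_\pm)$ and the action of~$\gamma$ both on the normal directions in~$B$ and on the fibre~$S^3$, and the overall normalisation of~$2\eta_{\Lambda B}(\A)$ relative to classical equivariant $\eta$-invariants noted after Definition~\ref{A3.D2}. The resulting double trigonometric sums must then be rearranged into the closed form of Definition~\ref{C2.D1}; this is the step where one uses Don Zagier's simplification acknowledged in the introduction.
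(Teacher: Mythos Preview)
Your overall strategy matches the paper's: compute~\eqref{C2.1} in the adiabatic limit via Theorem~\ref{Thm1} and Corollaries~\ref{A5.C1}--\ref{A5.C2}, splitting the right-hand side into the regular-stratum integral (Proposition~\ref{M4.P1}), the twisted-sector integrals (Proposition~\ref{M5.P1}), the Cheeger--Simons correction (Proposition~\ref{M6.P1}), and the spectral-sequence term (Proposition~\ref{M7.P1}). But your description of the orbifold geometry is wrong in a way that would derail the twisted-sector calculation.

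The singular strata~$B_\pm\subset B$ are not isolated points: by Proposition~\ref{M1.P1} they are copies of~$\R P^2$, and the corresponding twisted sectors of~$\Lambda B$ are their universal covers~$\tilde B_\pm\cong S^2$, one copy for each~$a\in\{1,\dots,\frac{p_\pm-1}2\}$. They are two-dimensional, not zero-dimensional. Consequently the integrand~$\hat A_{\Lambda B}(TB,\nabla^{TB})\cdot 2\eta_{\Lambda B}(\A)$ over~$\tilde B_\pm\times\{a\}$ is not a number but a differential form in~$\Omega^\bullet(S^2)$: one must expand both the equivariant~$\hat A$-form (as in~\eqref{M5.1}--\eqref{M5.2}, via the normal curvature~$R^{\tilde N_\pm}$) and the equivariant $\eta$-form (as in~\eqref{M5.4}, via the bundle curvature~$\Omega_\pm$) as power series in the area form~$\bar e^{23}$, multiply, and integrate the degree-two part over~$S^2$. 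In the paper this integration is what produces the \emph{derivative} at~$x=\pi a$ of the trigonometric expression, and that derivative is the mechanism generating the specific numerators and cubic sines in Definition~\ref{C2.D1}. If the twisted sectors were points, you would obtain a different (and incorrect) sum. So ``classical equivariant $\eta$-invariants evaluated at~$\gamma$'' is not enough; you need the full equivariant $\eta$-\emph{form} of Theorem~\ref{A6.T1}, i.e.\ the formal expansion of~$\eta_{\gamma e^{-\Xi}}(D_X)$ in~$\Xi$.

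One minor point: Zagier's simplification is not used here. It enters only in Section~\ref{C3}, where the particular sums~$D(p,p+2)$ for the spaces~$P_k$ are evaluated in closed form. Theorem~\ref{C2.T1} itself leaves~$D(p_\pm,q_\pm)$ unevaluated.
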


This theorem will be proved in section~\ref{M8}.

\begin{Corollary}\label{C2.C1}
  Assume that~$p$ and~$q$ are odd and relatively prime.
  Then there is a duality of generalised Dedekind sums
  \begin{equation*}
    D(p,q)+D(q,p)
    -\frac{2^6+2^4(p^2+q^2)+(p^4+q^4)}{2^8\cdot 7\,p^2q^2}
	+\frac7{2^7}\quad\in\quad\Z\;.
  \end{equation*}
\end{Corollary}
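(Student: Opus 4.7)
The plan is to extract the duality as an identity between Eells--Kuiper invariants via an orientation-reversing symmetry of the family $\{M_{(p_-,q_-),(p_+,q_+)}\}$. Let $\sigma\colon(g_1,g_2)\mapsto(g_2,g_1)$ denote the swap automorphism of $G=\Sp(1)\times\Sp(1)$. I would first verify that $\sigma$ carries the isotropy subgroups $H$, $K_-$, $K_+$ of~\eqref{C1.1}--\eqref{C1.3} onto those of the diagram with the parameters $(p_\pm,q_\pm)$ replaced by $(q_\pm,p_\pm)$: the circles $C^i_{(p,q)}$ and $C^j_{(p,q)}$ map to $C^i_{(q,p)}$ and $C^j_{(q,p)}$ by definition, while the finite subgroup $H$ is preserved because $(-1)^{(q-p)/2}=(-1)^{(p-q)/2}$ whenever $p,q$ are odd. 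Hence $\sigma$ descends to a diffeomorphism
$$\Phi\colon M_{(p_-,q_-),(p_+,q_+)}\xrightarrow{\;\cong\;}M_{(q_-,p_-),(q_+,p_+)}.$$

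Next I would argue that $\Phi$ reverses orientation. The permutation of the two copies of $\mathfrak{sp}(1)\cong\R^3$ is the product of three disjoint transpositions, hence has sign $(-1)^3=-1$; this reverses the orientation of $G$, of the generic orbit $G/H$, and therefore of the whole cohomogeneity-one manifold $M$, since the slice direction is preserved. Because each term in the intrinsic formula~\eqref{C2.1} for $\ek$ is antisymmetric under orientation reversal, this gives
$$\ek\bigl(M_{(p_-,q_-),(p_+,q_+)}\bigr)+\ek\bigl(M_{(q_-,p_-),(q_+,p_+)}\bigr)\;\in\;\Z.$$

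I would then specialize to $(p_-,q_-)=(p,q)$ and $(p_+,q_+)=(1,1)$; the formula of Theorem~\ref{C2.T1} applies as long as $p\ne q$, and the remaining case $p=q=1$ is a direct verification (both Dedekind sums vanish and the rational expression collapses to $0$). Adding the two instances of Theorem~\ref{C2.T1}, the signs $\sign(q^2-p^2)$ and $\sign(p^2-q^2)$ cancel, $D(1,1)=0$ as an empty sum, and the Dedekind contributions add up to $D(p,q)+D(q,p)$. To simplify the rational remainder I would combine the two fractions carrying $q^2-p^2$ in the denominator by means of the identity
$$p^2(1-q^2)^2-q^2(1-p^2)^2=-(q^2-p^2)(1-p^2q^2),$$
which clears this factor, and then consolidate everything over the common denominator $2^8\cdot 7\,p^2q^2$. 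A direct bookkeeping then produces exactly $\tfrac{7}{2^7}-\tfrac{2^6+2^4(p^2+q^2)+p^4+q^4}{2^8\cdot 7\,p^2q^2}$, which is the asserted combination.

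The main obstacle is the orientation analysis in the second step: although $\sigma$ identifies the two manifolds abstractly, one must be careful to determine whether the identification preserves or reverses orientation, since this is precisely what distinguishes a sum $D(p,q)+D(q,p)$ from a difference $D(p,q)-D(q,p)$. Once the sign is pinned down by the parity of the permutation of two odd-dimensional factors, the remainder of the argument is a mechanical consolidation of rational terms.
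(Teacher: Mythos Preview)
Your proposal is correct and follows essentially the same route as the paper: both use that swapping the $p$'s and $q$'s in the group diagram reverses orientation, hence $\ek(M_{(p_-,q_-),(p_+,q_+)})+\ek(M_{(q_-,p_-),(q_+,p_+)})\in\Z$, and then extract the stated rational expression from Theorem~\ref{C2.T1}. The only structural difference is that the paper keeps both pairs general, obtaining $A(p_-,q_-)-A(p_+,q_+)\in\Z$ for the expression $A$ in the Corollary, and then invokes the independence of the two pairs together with the check $A(1,1)=0$; you instead specialise to $(p_+,q_+)=(1,1)$ at the outset and carry out the algebraic simplification directly, which is a minor shortcut. Your orientation argument via the factor swap $\sigma$ of $\Sp(1)\times\Sp(1)$ is a welcome elaboration of a point the paper simply asserts.
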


\begin{proof}
  Swapping the $p$s and $q$s in both pairs~$(p_\pm,q_\pm)$
  corresponds to changing the orientation on~$M$,
  hence the Eells-Kuiper invariant changes its sign.
  Let~$A(p,q)$ denote the expression in the Corollary,
  then by Theorem~\eqref{C2.T1},
  \begin{equation*}
    A(p_-,q_-)-A(q_+,p_+)
    =\ek\bigl(M_{(p_-,q_-),(p_+,q_+)}\bigr)
	+\ek\bigl(M_{(q_-,p_-),(q_+,p_+)}\bigr)\in\Z\;.
  \end{equation*}
  Because we can choose the pairs~$(p_-,q_-)$ and~$(p_+,q_+)$
  independent of each other,
  subject only to the relation~$\frac{p_-}{q_-}\ne\frac{p_+}{q_+}$,
  it is enough to check that~$A(1,1)=0\in\Z$.
\end{proof}

\subsection{Some Examples}\label{C5}
Some of the manifolds~$M_{(p_-,q_-),(p_+,q_+)}$ are diffeomorphic to well-known
spaces by Grove, Wilking and Ziller~\cite{GWZ}. In this subsection,
we make sure that our computations above agree with other computations
of the invariants.

Let us denote by~$E_{p,n}$ the unit sphere bundle of a fourdimensional
real vector bundle~$V\to S^4$ with Euler class~$n=e(V)$
and half Pontrijagin class~$p=\frac{p_1}2(V)\in\Z\cong H^4(S^4)$.
Such a bundle exists if and only if~$n$ and~$p$ are of the same parity,
and is unique up to isomorphism in this case.
It is known that~$\frac{p_1}2(TE_{p,n})\equiv p\in\Z/n\Z\cong H^4(E_{p,n})$.
The bundles~$E_{p,n}$ and~$E_{-p,n}$ are oriented diffeomorphic,
and~$E_{\pm p,n}$ and~$E_{\pm p,-n}$ are orientation reversing diffeomorphic.
By~\cite[Proposition 2.6]{CG} and~\cite{CE}, we know that
\begin{equation}\label{C5.1}
  q_{E_{p,k}}(\ell)=\frac{\ell(p+\ell)}{2k}
  \qquad\text{and}\qquad
  \mu(E_{p,k})=\frac{p^2-k}{2^5\cdot 7\,k}\;.
\end{equation}
Note that Crowley and Escher in~\cite{CE} use the parameters~$n=k$
and~$m=\frac{p-k}2$.

\begin{Example}\label{M8.X1}
  If~$p_+=p_-=1$,
  then the base~$B$ is the manifold~$S^4$,
  represented as the unit sphere in the space of real trace-free symmetric
  endomorphisms of~$\R^3$ with its natural $\SO(3)$-action by conjugation.
  The manifold~$M$ is a principal $S^3$-bundle over~$S^4$,
  and the induced $\R^4$-bundle~$V\to B$ has Euler number
  \begin{equation*}
    e(V)[B]=k=\frac{q_-^2-q_+^2}8\quad\in\Z
  \end{equation*}
  by~\eqref{M4.2} below.

  Because~$M$ is a principal bundle, we also have~$\frac{p_1}2=\pm k$.
  By~\cite{CG},
  the $q$-invariant is given by
	$$q_M(\ell)=\frac{\ell(k+\ell)}{2k}\;,$$
  which agrees with Theorem~\ref{C2.T2}.

  The formula for the Eells-Kuiper invariant reduces to
  \begin{equation*}
    \ek(M_{(1,q_-),(1,q_+)})
    =\frac{\sign(q_-^2-q_+^2)}{2^5\cdot 7}
	-\frac{q_-^2-q_+^2}{2^8\cdot 7}
    =\frac{\sign e(V)[B]-e(V)[B]}{2^5\cdot 7}\;,
  \end{equation*}
  which agrees with the computations by Crowley and Escher in~\cite{CE}.
\end{Example}

\begin{Example}\label{M8.X2}
  By~\cite{GWZ},
  the space~$M_{(3,1),(1,3)}$ is diffeomorphic
  to the Berger space~$B^7=\SO(5)/\SO(3)$.
  Kitchloo, Shankar and the author computed the Eells-Kuiper invariant
  of this space in~\cite{GKS} and obtained
  \begin{equation*}
    \ek(B^7)=-\frac{27}{1120}\;,
  \end{equation*}
  which agrees with Theorem~\ref{C2.T1}.

  In~\cite{GKS}, we concluded that~$M$ with reversed orientation is
  diffeomorphic to an $S^3$-bundle over~$S^4$ with Euler number~$10$
  and half Pontrijagin number~$8$.
  By Theorem~\ref{C2.T2}, we find
  \begin{equation*}
    q_{M_{(3,1),(1,3)}}(\ell)=\frac{\ell(2+9\ell)}{20}
	\equiv\frac{\ell(2+9\ell)}{20}-\frac{\ell(\ell+1}2
	=-\frac{\ell(8+\ell)}{20}\quad\mod\Z\;,
  \end{equation*}
  which agrees with~\cite{CG}, see~\eqref{C5.1} above.
\end{Example}

\subsection{The Spaces \texorpdfstring{$P_k$}{Pk}}\label{C3}
It is shown in~\cite{GWZ}
that among the~$M_{(p_-,q_-),(p_+,q_+)}$,
only the Berger space~$M_{(3,1),(1,3)}$ and the spaces
\begin{equation*}
  P_k=M_{(1,1),(2k-1,2k+1)}
\end{equation*}
can carry a metric of positive sectional curvature.
So far, such metrics have been found on~$P_1\cong S^7$, the manifold~$P_2$,
and the Berger space.
In this section,
we determine the diffeomorphism type of the~$P_k$.
In particular,
we prove Theorems~\ref{Thm2}, \ref{Thm3} and Corollary~\ref{Cor1}.

We start by evaluating the Dedekind sums of Definition~\ref{C2.D1}.
For~$q=p+2$, these sums simplify as follows.
\begin{align*}
  D(p,p+2)
  &=\sum_{a=1}^{p-1}\Biggl(
	\frac{14\cos\frac{4\pi a}{p}+\cos^2\frac{2\pi a}{p}}
	{2^4\cdot 7\,p^2\sin^2\frac{4\pi a}{p}\sin^2\frac{2\pi a}{p}}
	+\frac{(p+2)\cos\frac{2\pi a}{p}
		\bigl(14+\cos\frac{4\pi a}{p}\bigr)}
	{2^5\cdot 7\,p^2\sin\frac{4\pi a}{p}\sin^3\frac{2\pi a}{p}}
	\Biggr)\\
  &=\frac1{2^4\cdot 7\,p^2}\sum_{a=1}^{p-1}\biggl(
	\frac{15}{4\sin^4\frac{2\pi a}{p}}
	-\frac{14}{\sin^2\frac{4\pi a}{p}}\biggr)\\
  &\qquad
  +\frac{p+2}{2^5\cdot 7\,p^2}\sum_{a=1}^{p-1}\biggl(
        \frac{15}{2\sin^4\frac{2\pi a}{p}}
        -\frac{2}
	{2\sin^2\frac{2\pi a}{p}}\biggr)\\
  &=\frac{15(p+3)}{2^6\cdot 7\,p^2}\sum_{a=1}^{p-1}\frac1{\sin^4\frac{2\pi a}{p}}
  -\frac{p+30}{2^5\cdot 7\,p^2}\sum_{a=1}^{p-1}\frac1{\sin^2\frac{2\pi a}{p}}\;.
\end{align*}

As Zagier pointed out to us,
the sums above can be computed by substituting~$z$ for~$e^{\frac{4\pi a}p}$.
Because the resulting rational function in~$z$ vanishes to sufficiently high
order at~$z=\infty$, we obtain
\begin{align*}
  \sum_{a=1}^{p-1}\frac1{\sin^{2\ell}\frac{2\pi a}p}
  &=\sum_{\begin{smallmatrix}\zeta^p=1\\\zeta\ne1\end{smallmatrix}}
	\Res_{z=\zeta}\biggl(\frac{(-4z)^\ell}{(z-1)^{2\ell}}\cdot\frac p{z^p-1}
	\cdot\frac 1 z\biggr)\\
  &=-\Res_{z=1}\biggl(\frac{(-4)^\ell}{(z-1)^{2\ell+1}}
	\cdot\frac{p\,z^{\ell-1}\,(z-1)}{z^p-1}\biggl)\;.
\end{align*}
For~$\ell=1$ and~$2$, we obtain
\begin{align*}
  \sum_{a=1}^{p-1}\frac1{\sin^2\frac{2\pi a}{p}}
  &=\frac{p^2-1}3\;,\\
	\text{and}\qquad
  \sum_{a=1}^{p-1}\frac1{\sin^4\frac{2\pi a}{p}}
  &=\frac{p^4+10\,p^2-11}{45}\;.
\end{align*}

We combine the above and find that
\begin{align*}
  D(p,p+2)
  &=\frac{15(p+3)}{2^6\cdot 7\,p^2}
	\cdot\frac{p^4+10\,p^2-11}{45}
  -\frac{p+30}{2^5\cdot 7\,p^2}\cdot\frac{p^2-1}3\\
  &=\frac{(p^2-1)(p^3+3\,p^2+9\,p-27)}
	{2^6\cdot 3\cdot 7\,p^2}\;.
\end{align*}

\begin{proof}[Proof of Theorem~\ref{Thm2}]
With Theorem~\ref{C2.T1} and the above, we compute
\begin{align*}
  \ek\bigl(M_{(1,1),(p,p+2)}\bigr)
  &=\frac{\sign(p^2-(p+2)^2)}{2^5\cdot 7}
	+D(1,1)-D(p,p+2)\\
  &\qquad
	-\frac{(p^2-1)^2}{2^2\cdot 7\,p^2(p^2-(p+2)^2)}
	-\frac{2^4(p^2-1)+(p^2-(p+2)^2)}{2^8\cdot 7\,p^2}\\
  &=-\frac{1}{2^5\cdot 7}
	-\frac{(p^2-1)(p^3+3\,p^2+9\,p-27)}{2^6\cdot 3\cdot 7\,p^2}\\
  &\qquad
	+\frac{(p^2-1)(p-1)}{2^4\cdot 7\,p^2}
	-\frac{4(p^2-1)-(p+1)}
		{2^6\cdot 7\,p^2}\\
  &=-\frac{p^3+3\,p^2-4\,p}
	{2^6\cdot3\cdot7}\quad\in\Q/\Z\;.
\end{align*}
With~$p=2k-1$, we have
$$\mu(P_k)=\ek\bigl(M_{(1,1),(2k-1,2k+1)}\bigr)
	=-\frac{4\,k^3-7\,k+3}{2^5\cdot3\cdot7}\quad\in\Q/\Z\;.$$

We also compute~$q_{P_k}$ using Theorem~\ref{C2.T2} as
	$$q_{P_k}(\ell)
	=\frac{\ell((2k+1)^2-1+\ell(2k+1)^2)}{2k}-\frac\ell 2
	=\frac{\ell(k+\ell)}{2k}\quad\in\Q/\Z\;.\quad\qedhere$$
\end{proof}

Having computed the Eells-Kuiper invariant and Crowley's quadratic form~$q$,
we can now compare the spaces~$P_k$ with the principal $S^3$-bundles~$E_{k,k}$
over~$S^4$.


\begin{proof}[Proof of Theorem~\ref{Thm3}]
  By~\cite{C},
  highly connected seven-manifolds are classified up to oriented
  diffeomorphism
  by their Eells-Kuiper invariants and the quadratic function~$q$ on~$H^4$.
  These invariants have been computed for~$E_{k,k}$ in~\cite{CE} and~\cite{CG},
  see~\eqref{C5.1}.

  By Theorem~\ref{Thm2}~\eqref{Thm2.2} and equation~\eqref{C5.1},
  the quadratic forms~$q_{P_k}$ and~$q_{E_{k,k}}$ are isomorphic.
  Hence, the spaces $P_k$ and~$E_{k,k}$ are homeomorphic,
  and even almost diffeomorphic.

  Comparing the value of~$\ek(P_k)$ from Theorem~\ref{Thm2}~\eqref{Thm2.1}
  with~\eqref{C5.1},
  we find
  \begin{equation*}
    \ek(P_k)-\ek(E_{k,k})
    =-\frac{\frac{4k^3-7k}3+1}{2^5\cdot 7}-\frac{k-1}{2^5\cdot 7}
    =\frac{4\frac{k-k^3}3}{2^5\cdot 7}
    =\frac{k-k^3}6\cdot\frac1{28}
  \end{equation*}
  with~$\frac{k-k^3}6\in\Z$.
  Because both~$q$ and~$\ek$ are additive under connected sums
  and~$q_{\Sigma_7}$ is trivial whereas~$\ek(\Sigma_7)=\frac1{28}$,
  we conclude again by~\cite{C} that~$P_k$
  and~$E_{k,k}\mathbin{\#}\Sigma_7^{\#\frac{k-k^3}6}$ are oriented diffeomorphic.
\end{proof}

\begin{Remark}
  Grove, Verdiani and Ziller have already observed in~\cite{GVZ}
  that~$P_2$ is homeomorphic to the unit tangent bundle~$T_1S^4$ of~$S^4$.
  The group~$\Sp(1)$ acts with isolated fixpoints on~$S^4$,
  so this action induces a free action on~$T_1S^4$,
  hence~$T_1S^4$ is diffeomorphic to~$E_{2,2}$.
  Of course, this fits with our result above.
\end{Remark}


\begin{proof}[Proof of Corollary~\ref{Cor1}]
  We start with case~\eqref{Cor1.1}.
  We already know that Crowley's form~$q$ for~$P_k$ is the quadratic form
  of the principal sphere bundle~$E_{k,k}\to S^4$.
  This implies that
  the Pontrijagin number of a sphere bundle~$E_{p,k}$ homeomorphic to~$P_k$
  must be of the form~$p=ak$ with~$a$ odd if~$k$ is even,
  see equation~\eqref{C5.1}.

  It thus remains to solve~$\ek(P_k)=\ek(E_{ak,k})\in\Q/\Z$ depending on~$p=ak$.
  By~\cite{CE},
  we know that
  \begin{equation*}
    \ek(E_{ak,k})-\ek(P_k)
    =\frac{a^2k^2-k}{2^5\cdot 7\cdot k}-\frac{\frac{7k-4k^3}3-1}{2^5\cdot 7}
    =\frac{a^2k-\frac{7k-4k^3}3}{2^5\cdot 7}\quad\mod\Z\;.
  \end{equation*}
  In other words,
	$$a^2k\equiv\frac{7k-4k^3}3\quad\mod 224\Z\;.$$
  It suffices to solve this equation modulo~$7$ and~$32$.

  Modulo~$7$, the equation is trivial if~$7|k$.
  Otherwise, we can clearly solve
	$$a^2\equiv\frac{7-4k^2}3\equiv k^2\quad\mod 7\Z\;.$$

  Modulo~$32$, we start with the case that~$k$ is odd.
  Because~$3\times 11\equiv 1$, we have to solve
	$$a^2\equiv\frac{7-4k^2}3\equiv 77-44k^2
	\equiv 13-12k^2\quad\mod 32\Z\;.$$
  The right hand side equals~$1$ mod~$8$ and hence is a quadratic remainder
  modulo~$32$.
  Next, if~$k$ is even but not divisible by~$8$,
  then we would have at least
	$$a^2\equiv 77\equiv 5\quad\mod 8\Z\;,$$
  but~$5$ is not a quadratic remainder mod~$8$.
  Finally, if~$8|k$, we can clearly solve
	$$a^2\equiv 1\quad\mod 4\Z\;.$$
  In particular,
  if~$k$ is even then~$a$ will be odd,
  so the quadratic forms~$q$ agree as well.
  This settles~\eqref{Cor1.1}.

  In case~\eqref{Cor1.2},
  let~$n=k$ be as above.
  Then~$p=ak$ because we still have
	$$\lk_{P_k}(b,p)=q_{P_k}(b)-q_{P_k}(-b)=0\quad\in\Q/\Z$$
  by Theorem~\ref{Thm2}~\eqref{Thm2.1}.
  We will first try to solve~$\ek(P_k)+\ek(E_{ak,k})=0\in\Q/\Z$.
  We find that
  \begin{equation*}
    224(\ek(E_{ak,k})+\ek(P_k))
    =a^2k+\frac{7k-4k^3}3-2\quad\mod 224\Z\;.
  \end{equation*}

  Modulo~$7$,
  there is no solution if~$7|k$.
  On the other hand, a case-by-case check reveals that
	$$\frac2k-\frac{7-4k^2}3\equiv\frac2k-k^2\quad\mod7\Z$$
  is a quadratic remainder for~$k\in\{1,\dots,6\}$ mod~$7$.
  Thus, a solution mod~$7$ exists if and only if~\eqref{Cor1.2a} is satisfied.

  Modulo~$32$,
  if~$k$ is odd, we have~$k^3\equiv k$ modulo~$8$.
  Hence we have to solve
	$$a^2k\equiv 2-13\,k+12\,k^3\equiv2-k\quad\mod 32\;.$$
  The inverse of~$k=4\ell\pm 1$ mod~$16$ is~$-4\ell\pm 1$, hence we obtain
	$$a^2\equiv -8\ell\pm 2-1\quad\mod 32\;,$$
  which is a quadratic remainder if and only if~$k=4\ell+1\equiv 1$ mod~$4$.
  If~$k=2\ell$ is even, then~$12\,k^3\equiv 0$ mod~$32$, and we are left with
	$$a^2\ell\equiv 1-13\,\ell\equiv 1+3\,\ell\quad\mod 16\;,$$
  and~$\ell$ has to be odd.
  The inverse of~$\ell=\pm1+4m$ is~$\pm1-4m$, and
	$$a^2=\pm 1-4m+3$$
  is a square if and only if~$\ell=1+4m\in\{1,5\}$ modulo~16,
  hence~$k\in\{2,10\}$ modulo~$32$.
  This gives~\eqref{Cor1.2b}.

  Finally,
  we have
	$$-\lk_{E_{ak,k}}=\lk_{P_k}(b\cdot\mathord{},b\cdot\mathord{})$$
  for some~$b\in\Z/k\Z$ if and only if~$b^2\equiv-1$ mod~$k$.
  Because the half Pontrijagin forms vanish
  and the topological Eells-Kuiper invariants
  satisfy~$28\ek(P_k)+28\ek(E_{ak,k})\in\Z$ by the above,
  it follows from~\cite{C} that then the quadratic forms~$q$
  are isomorphic as well.
  Thus by~\cite{C},
  there exists an orientation reversing diffeomorphism~$E_{ak,k}\to P_k$
  if and only if the conditions~\eqref{Cor1.2} hold.
\end{proof}

\begin{Example}\label{C4.X1}\relax
  \begin{enumerate}
  \item\label{C4.X1.1} We have~$P_1=S^7$,
    which of course fibres over~$S^4$,
    independent of the orientation.
    More precisely, $P_1$ is diffeomorphic to~$E_{a,1}$
    if and only if
	$$\frac{a^2-1}{224}\equiv 0\quad\in\Q/\Z\;,$$
    that is,
    if and only if~$a\in\{\pm1,\pm15\}$ mod~$112$.
  \item\label{C4.X1.2} There is an orientation reversing diffeomorphism
    from~$P_2$ to~$E_{4,2}$. Indeed,
	$$\kern3em
	q_{P_2}(1)=\frac34=-q_{E_{4,2}}(1)
	\quad\text{and}\quad
	\ek(P_2)=-\frac1{32}=-\ek(E_{4,2})\in\Q/\Z\;.$$
    More generally, $P_2$ is orientation reversing diffeomorphic
    to~$E_{2a,2}$ if and only if~$a\equiv\pm2$ mod~$28$.
  \item\label{C4.X1.3} There is an orientation preserving diffeomorphism
    of~$P_3$ with~$E_{51,3}$ because
	$$\ek(P_3)=-\frac{15}{112}
	\equiv\frac{433}{112}=\ek(E_{51,3})\in\Q/\Z\;.$$
    More generally, $P_3$ is oriented diffeomorphic to~$E_{3a,3}$
    if and only if~$a\equiv\pm 17$, $\pm31$ mod~$112$.
  \item\label{C4.X1.4} For~$k=4$,
    there exists no diffeomorphic sphere bundle,
    regardless of the orientation.
  \item\label{C4.X1.5} For~$k=5$,
    we have oriented diffeomorphisms with~$E_{5a,5}$
    if and only if~$a\equiv\pm 33$, $\pm 47$ mod~$112$.
    We also have orientation reversing diffeomorphisms with~$E_{5a,5}$
    if and only if~$a\equiv\pm 11$, $\pm 53$ mod~$112$.
    For example,
    \begin{align*}
      \ek(P_5)=-\frac{156}{224}&\equiv\frac{5444}{224}=\ek(E_{165,5})\\
      &\equiv-\frac{604}{224}=-\ek(E_{55,5})\quad\mod\Z\;.
    \end{align*}
    Because~$-1\equiv2^2$ is a quadratic remainder mod~$5$,
    we can compare the quadratic forms in the latter case and find that
	$$q_{P_5}(2\ell)=\frac{2\ell(2\ell-5)}{10}
	\equiv-\frac{\ell(\ell-55)}{10}=-q_{E_{55,5}}(\ell)\quad\mod\Z\;.$$
  \end{enumerate}
\end{Example}

\section{Computation of the invariants}\label{M}
We write the spaces~$M_{(p_-,q_-),(q_-,q_+)}$ as Seifert fibrations
so that we can apply Theorem~\ref{Thm1} to compute their Eells-Kuiper
invariants and $t$-invariants.

\subsection{Description as a Seifert Fibration}\label{M1}
Recall the construction of the spaces~$M=M_{(p_-,q_-),(p_+,q_+)}$
as manifolds of cohomogeneity one with group diagram~\eqref{C1.3},
with the groups~$H$ and~$K_\pm\subset G=\Sp(1)\times\Sp(1)$
described in~\eqref{C1.1} and~\eqref{C1.2}.

The subgroups~$\Sp(1)\times\{e\}$ and~$\{e\}\times\Sp(1)\subset G$ act freely
from the left on the generic orbit~$G/H$.
We focus on the group~$L=\{e\}\times\Sp(1)$
and consider the quotient map
	$$M\longrightarrow B=L\backslash M\;.$$
The group~$L$ acts on the singular orbits~$G/K_\pm$
with finite stabilizer
\begin{equation*}
  L_{gK_\pm}=L\cap gK_\pm g^{-1}=g\Gamma_\pm g^{-1}\subset L\;,
\end{equation*}
at the point~$gK_\pm\in G/K_\pm$, where
\begin{equation}
  \begin{aligned}\label{M1.1}
    \Gamma_-
    &=\<\gamma_-\>\cong\Z/p_-\Z
    &\text{with}\quad
    \gamma_-
    &=\biggl(1,e^{2\pi i\tfrac{q_-}{p_-}}\biggr)\in K_-\cr
    \text{and}\qquad
    \Gamma_+
    &=\<\gamma_+\>\cong\Z/p_+\Z
    &\text{with}\quad
    \gamma_+
    &=\biggl(1,e^{2\pi j\tfrac{q_+}{p_+}}\biggr)\in K_+\;.
  \end{aligned}
\end{equation}

The quotient~$L\backslash M$ has a cohomogeneity one action
by the group~$\SO(3)\cong\penalty0\Sp(1)/\pm 1$.
It is induced by the action
of~$\Sp(1)\times\{e\}\subset G$ on~$M$,
with group diagram 
\begin{equation}\label{M1.0}
  \begin{CD}
    &&\SO(3)\\
    &\;\;\nearrow\;\;&&\;\;\nwarrow\;\;\\
    p_1K_-&\cong&\O(2)&\cong&p_1K_+&\quad.\\
    &\;\;\nwarrow\;\;&&\;\;\nearrow\;\;\\
    &&p_1H
  \end{CD}
\end{equation}
Here~$p_1$ denotes the projection
	$$G=\Sp(1)\times\Sp(1)
	\longrightarrow(\Sp(1)/\{\pm1\})\times\{e\}\cong\SO(3)\;.$$
In particular,
$p_1H\cong Q/\{\pm1\}\cong(\Z/2\Z)^2$ is the subgroup
of diagonal matrices in~$\SO(3)$.

If~$a$ is an imaginary unit quaternion,
let~$S^1_a\subset\Sp(1)$ denote the one-parameter subgroup
generated by~$a$.
Because
\begin{equation*}
  p_1K_-=(S^1_i+jS^1_i)/\{\pm1\}\cong\O(2)
  \quad\text{and}\quad
  p_1K_+=(S^1_j+iS^1_j)/\{\pm1\}\cong\O(2)\;,
\end{equation*}
the singular orbits of~$B$ are given by
\begin{equation*}
  B_\pm=L\backslash M_\pm 
  \cong\SO(3)/\O(2)\cong\R P^2\;.
\end{equation*}

We want to understand the geometry of~$p\colon M\to B$
near the singular orbits.
The action of~$K_-$ on~$S^1\cong K_-/H$ extends to~$\C\supset S^1$ by
\begin{equation}\label{M1.2}
  \bigl(e^{ip_-\thet},e^{iq_-\thet}\bigr)z=e^{4i\thet}z
	\quad\text{and}\quad
  \Bigl(e^{ip_-\thet}j,(-1)^{\tfrac{q_--p_-}2}e^{iq_-\thet}j\Bigr)z
  =e^{4i\thet}\bar z\;,
\end{equation}
and there is a similar action of~$K_+$ on~$\C$.
Thus,
the singular orbits~$M_\pm=G/K_\pm$ have neighbourhoods~$M\setminus M_\mp$
diffeomorphic to the normal bundles
\begin{equation}\label{M1.3}
  N_\pm=G\times_{K_\pm}\C\longrightarrow G/K_\pm\;.
\end{equation}
For the generator~$\gamma_\pm\in\Gamma_\pm$ of~\eqref{M1.1},
we have the angle~$\thet=\frac{2\pi}{p_\pm}$ in~\eqref{M1.2}.
So~$\gamma_\pm$ acts on the fibre of~$N_\pm$ by multiplication
with~$e^{\frac{8\pi i}{p_\pm}}\in\mu_{p_\pm}$,
where~$\mu_{p_\pm}$ denotes the group of $p_\pm$th roots of unity.

Projecting down to~$B$,
neighbourhoods of~$B_\pm$ are given by
\begin{equation}\label{M1.4}
  B\setminus B_\mp
  \cong\SO(3)\times_{\O(2)}\C/\mu_{p_\pm}\;,
\end{equation}
where the action of~$\O(2)\cong(S^1_i\cup jS^1_i)/\{\pm1\}$
on~$\C/\mu_{p_\pm}$ is given by
\begin{equation}\label{M1.5}
  \pm e^{i\thet}\colon z\mapsto e^{4i\thet}z
  \qquad\text{and}\qquad
  \pm e^{i\thet}j\colon z\mapsto e^{4i\thet}\bar z\;.
\end{equation}

We fix an origin~$o=(p_1K_-)$ in~$B_-$
and consider a path~$g_t$ from~$g_0=e$ to~$g_1=\{\pm j\}\in O(2)$,
then~$g_to$ describes a nontrivial loop in~$B_-\cong\R P^2$.
The stabiliser of~$g_to\in B_-$ is given by~$g\Gamma_-g^{-1}$,
and a get a path of generators~$\gamma_{-,t}=g_t\gamma_-g_t^{-1}$
from~$\gamma_-=\gamma_{-,0}$ to
	$$\gamma_{-,1}=\bigl(1,j\,e^{2\pi iq_-/p_-}\,(-j)\bigr)
	=\gamma_{-,0}^{-1}\;.$$
We conclude
that the twisted sectors of~$B$ are diffeomorphic to the universal covering
spaces~$\tilde B_\pm\cong S^2$ of~$B_\pm$.
Let us summarize our results so far.

\begin{Proposition}\label{M1.P1}
  The map~$p\colon M\to B=L\backslash M$ is a Seifert fibration
  and a left $\Sp(1)$-principal orbibundle.

  The inertia orbifold~$\Lambda B$ of the base orbifold~$B$
  is diffeomorphic to
  \begin{gather*}
    \Lambda B
    =B\sqcup\biggl(\tilde B_-\times\biggl\{1,\dots,\frac{p_--1}2\biggr\}\biggr)
    \sqcup\biggl(\tilde B_+\times\biggl\{1,\dots,\frac{p_+-1}2\biggr\}\biggr)\;.
	\tag{1}\label{M1.P1.1}
  \end{gather*}
  Elements~$(p,(\gamma_\pm^k))\in\Lambda B\setminus B$ are represented
  by~$(p,\ell)\in\tilde B_\pm\times\{\ell\}$
  with~$\pm k\equiv\ell$ mod~$p_\pm$
  and~$\ell\in\bigl\{1,\dots,\frac{p_\pm-1}2\bigr\}$.
  The components~$\tilde B_\pm\times\{k\}$ have multiplicity
  \begin{gather*}
    m(\gamma_\pm^k)=\#\Gamma_\pm=p_\pm\;.\tag{2}\label{M1.P1.2}
  \end{gather*}

  In a suitable orbifold chart around~$p$,
  the element~$\gamma_\pm^k$ acts by
  \begin{gather*}
    \rho(\gamma_\pm^k)=
    \begin{pmatrix}
      1&0\\0&e^{8\pi ik\tfrac1{p_\pm}}
    \end{pmatrix}
    \in\U(2)\;,\tag{3}\label{M1.P1.3}
  \end{gather*}
  and the fibrewise action on~$S^3$ is conjugate to
  \begin{gather*}
    e^{2\pi ik\frac{q_\pm}{p_\pm}}\in\Sp(1)\;.\tag{4}\label{M1.P1.4}
  \end{gather*}
\end{Proposition}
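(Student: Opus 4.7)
The plan is to combine the cohomogeneity-one description of $M$ with the slice theorem to produce explicit orbifold charts for $B$ and orbibundle charts for $p\colon M\to B$, and then to read off each assertion of the proposition directly from these charts.

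First I would establish that $p$ is a Seifert fibration and a left $\Sp(1)$-principal orbibundle. Off of $B_\pm$ the $L$-action on $M$ is free, so $p$ is an ordinary principal bundle there. Near a singular orbit $M_\pm$, the normal-bundle model $N_\pm = G\times_{K_\pm}\C$ of~\eqref{M1.3}, with $L$ acting by left translation on the $G$ factor, provides the required slice. An element $(1,h)\in L$ lies in $K_\pm$ iff $h$ belongs to the cyclic subgroup generated by $e^{2\pi i q_-/p_-}$ (respectively $e^{2\pi j q_+/p_+}$), whence $L\cap K_\pm = \Gamma_\pm$. Quotienting the slice model by $L$ and using~\eqref{M1.2} at $\theta = 2\pi/p_\pm$ recovers the chart~\eqref{M1.4}--\eqref{M1.5} for $B$ and produces an orbibundle chart for $M\to B$ whose typical fibre is $L\cong\Sp(1)$, with $\Gamma_\pm$ acting by right multiplication.

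Next I would determine the inertia orbifold. Generic points of $B$ have trivial isotropy and contribute $B$ itself to $\Lambda B$. Over $B_\pm$ the isotropy is the abelian group $\Gamma_\pm\cong\Z/p_\pm\Z$, whose conjugacy classes are singletons $(\gamma_\pm^k)$ indexed by $k\in\Z/p_\pm\Z$. To identify twisted sectors globally I would follow the monodromy of these classes along loops in $B_\pm\cong\R P^2$: a nontrivial loop lifts to a path in $\SO(3)$ ending at the image of the generator $(j,(-1)^{(q_+-p_+)/2}j)$ of the non-identity component of $K_-$ (and symmetrically for the plus case), and conjugation by this element acts on $\Gamma_\pm$ by inversion $\gamma_\pm\mapsto\gamma_\pm^{-1}$. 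Hence the classes $(\gamma_\pm^k)$ and $(\gamma_\pm^{-k})$ are identified along such a loop, and the corresponding twisted sector is the orientation double cover $\tilde B_\pm\cong S^2$ of $B_\pm$. Since $p_\pm$ is odd, the nonzero classes pair into $(p_\pm-1)/2$ distinct sectors, which gives~\eqref{M1.P1.1} together with the indexing convention $\pm k\equiv\ell$ mod $p_\pm$.

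Finally I would extract the local data~\eqref{M1.P1.2}--\eqref{M1.P1.4}. In the chart $V = T_pB_\pm\oplus\C$ the fixed locus $V^{\gamma_\pm^k}$ is exactly $T_pB_\pm$ and all of the abelian group $\Gamma_\pm$ acts trivially on it, so Definition~\eqref{A2.0} yields multiplicity $p_\pm$. The same chart, combined with~\eqref{M1.2} at $\theta = 2\pi k/p_\pm$, shows that $\gamma_\pm^k$ acts as the identity on $T_pB_\pm$ and as multiplication by $e^{8\pi ik/p_\pm}$ on the normal $\C$, giving the $\U(2)$ matrix in~\eqref{M1.P1.3}. For the fibre action, $p^{-1}(p) = L/(L\cap K_\pm) = \Sp(1)/\Gamma_\pm$, and in the trivialisation $\gamma_\pm^k$ acts on the $\Sp(1)$-cover by right multiplication by $e^{2\pi ikq_-/p_-}$ or $e^{2\pi jkq_+/p_+}$; since all unit imaginary quaternions are $\Sp(1)$-conjugate, this action is conjugate to right multiplication by $e^{2\pi ikq_\pm/p_\pm}\in\Sp(1)$, as claimed. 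The main obstacle is the monodromy step in the third paragraph: one must verify carefully that the reflection component of $p_1K_\pm\cong\O(2)$ in~\eqref{M1.0} really acts on $\Gamma_\pm$ by inversion, as this is exactly what forces the twisted sectors to live over $S^2$ rather than over $\R P^2$ and thereby controls the shape of the orbifold $\eta$-form entering Theorem~\ref{Thm1}. Everything else is routine bookkeeping in the slice charts~\eqref{M1.3}--\eqref{M1.5}.
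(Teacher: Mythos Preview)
Your proposal is correct and follows essentially the same route as the paper: the paper carries out the slice and monodromy analysis in the discussion immediately preceding the proposition (including the explicit computation that conjugation by~$j$ sends~$e^{2\pi iq_-/p_-}$ to its inverse, hence twisted sectors are~$S^2$'s), and the formal proof then just points back to those preparations and reads off~\eqref{M1.P1.2}--\eqref{M1.P1.4} from the charts~\eqref{M1.1}--\eqref{M1.5}, exactly as you outline.
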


\begin{proof}
  From the discussion above,
  it is clear that~$p$ is both a Seifert fibration
  and an $\Sp(1)$-principal bundle,
  where the group~$L\cong\Sp(1)$ acts from the left.
  Assertion~\eqref{M1.P1.1} follows from the considerations above,
  and~\eqref{M1.P1.2} follows from the definition of multiplicity
  in~\eqref{A2.0}.

  We construct an orbifold chart by taking a neighbourhood of~$p$
  in~$B_\pm\cong\R P^2$ that is diffeomorphic to~$\C$ with
  trivial action of~$\gamma_\pm^k$.
  The normal bundle of~$B_\pm$ in~$B$ is represented by another
  copy of~$\C$ on which~$\gamma_\pm^k$ acts as in~\eqref{M1.5}.
  This proves~\eqref{M1.P1.3}.
  Finally,
  the action of~$\gamma_\pm^k$ on~$S^3$ follows from~\eqref{M1.1}
  and is conjugate to the expression in~\eqref{M1.P1.4}.
\end{proof}

\subsection{The geometry of the Seifert Fibration}\label{M2}

We want to study the metric structure on~$M$ and~$B$.
In particular, we want to derive formulas for the curvature
of the horizontal and vertical tangent bundles
of the Seifert fibration~$M\to B$.
By integration over~$B$,
we can determine the Pontrijagin numbers and the Cheeger-Simons numbers
that are necessary to compute the Eells-Kuiper invariant.

Recall that as a cohomogeneity one manifold, we may write
\begin{equation*}
  M=\bigl([-1,1]\times G/H\bigr)\bigm/\sim\;.
\end{equation*}
Let~$\tau\colon M\to[-1,1]$ denote the natural projection,
and define a curve~$c\colon[-1,1]\penalty0\to M$ joining~$G/K_-$ to~$G/K_+$
by
\begin{equation*}
  c(t)=[t,eH]\;.
\end{equation*}
We define $G$-invariant vector fields~$e_0$, \dots, $e_3$
and~$f_1$, $f_2$, $f_3$ on~$M\setminus(M_+\cup M_-)=\tau^{-1}(-1,1)$
by specifying them along~$c$.
Therefore put~$e_0(c(t))=\dot c(t)$ and
\begin{equation}\label{M2.1}
  \begin{aligned}\relax
    e_1(c(t))&=\frac d{dt}\Bigr|_{t=0}\bigl(e^{it},1\bigr)(c(t))\;,\qquad&
    f_1(c(t))&=\frac d{dt}\Bigr|_{t=0}\bigl(1,e^{it}\bigr)(c(t))\;,\\
    e_2(c(t))&=\frac d{dt}\Bigr|_{t=0}\bigl(e^{jt},1\bigr)(c(t))\;,\qquad&
    f_2(c(t))&=\frac d{dt}\Bigr|_{t=0}\bigl(1,e^{jt}\bigr)(c(t))\;,\\
    e_3(c(t))&=\frac d{dt}\Bigr|_{t=0}\bigl(e^{kt},1\bigr)(c(t))\;,\qquad&
    f_3(c(t))&=\frac d{dt}\Bigr|_{t=0}\bigl(1,e^{kt}\bigr)(c(t))\;.
  \end{aligned}
\end{equation}
We regard the vector fields~$e_0$, \dots, $e_3$ as horizontal
and~$f_1$, $f_2$, $f_3$ as vertical fields with respect to the Seifert
fibration~$M\to B$.
All Lie brackets between these vector fields vanish except
\begin{equation}\label{M2.2}
  \begin{aligned}\relax
    [e_1,e_2]&=2e_3\;,\qquad&
    [e_2,e_3]&=2e_1\;,\qquad&
    [e_3,e_1]&=2e_2\;,\\
    [f_1,f_2]&=2f_3\;,\qquad&
    [f_2,f_3]&=2f_1\;,\qquad&
    [f_3,f_1]&=2f_2\;.
  \end{aligned}
\end{equation}

Let~$\phy\colon[0,1]$ denote a cutoff function
with~$\phy|_{[0,\eps)}=0$ and~$\phy|_{(1-\eps,1)}=1$ for some small~$\eps>0$.
For~$x\in M$, let~$t=\tau(x)$, and define functions~$f$,
$g\colon\tau^{-1}((-1,0])\to\R$ by
\begin{equation}\label{M2.3}
  f=\frac{4+4\tau\cdot\phy(-\tau)}{4+(p_--4)\cdot\phy(-\tau)}
	\qquad\text{and}\qquad
  g=\frac{q_-}4\,f'
\end{equation}
These functions satisfy
\begin{equation}\label{M2.4}
  \begin{aligned}
  f|_{(-1,\eps-1)}&=\frac 4{p_-}(1+\tau)\;,&
  f|_{(-\eps,0]}&=1\;,\\
  g|_{(-1,\eps-1)}&=\frac{q_-}{p_-}\;,\qquad\text{and}&\qquad
  g|_{(-\eps,0]}&=0\;.
  \end{aligned}
\end{equation}

Let~$g^{TM}$ be a $G$-invariant metric
such that for~$t\le 0$,
the vectors~$e_0$, $e_2$, $e_3$, $f_2$ and~$f_3$ are orthonormal
and perpendicular to the subspace spanned by~$e_1$, $f_1$,
and such that on this subspace,
$g^{TM}$ is given by the matrix
\begin{equation}\label{M2.5}
  g^{TM}|_{\Span\{e_1,f_1\}}
  =\begin{pmatrix}f^2+g^2&-g\\-g&1\end{pmatrix}\;.
\end{equation}
This metric extends to a smooth metric around~$G/K_-=\tau^{-1}(-1)$
by Theorem~6.1 in~\cite{GVZ}.
The orbits of~$L=\{e\}\times\Sp(1)$ are all quotients of round spheres
with the standard metric.

A $g^{TM}$-orthonormal frame on~$\tau^{-1}(-1,0]$ is given by~$(\bar e_0,
\dots,\bar f_3)$,
where~$\bar f_i=f_i$ and~$\bar e_i=e_i$ except
\begin{equation}\label{M2.6}
  \bar e_1=\frac1f\,e_1+\frac gf\,f_1
\end{equation}

By~\eqref{M2.2},
the Lie brackets between the vector fields~$\bar e_0$, \dots, $\bar f_3$
vanish except
\begin{equation}\label{M2.7}
  \begin{aligned}\relax
    [\bar e_0,\bar e_1]
    &=-\frac{f'}f\,\bar e_1+\frac{g'}f\,\bar f_1\;,&
    [\bar e_1,\bar f_2]&=\frac{2g}f\,\bar f_3\;,&
    [\bar e_1,\bar f_3]&=-\frac{2g}f\,\bar f_2\;,\\
    [\bar e_1,\bar e_2]&=\frac2f\bar e_3\;,&
    [\bar e_2,\bar e_3]&=2f\,\bar e_1-2g\,\bar f_1\;,&
    [\bar e_3,\bar e_1]&=\frac2f\bar e_2\;,\\
    [\bar f_1,\bar f_2]&=2\bar f_3\;,&
    [\bar f_2,\bar f_3]&=2\bar f_1\;,\qquad\text{and}&
    [\bar f_3,\bar f_1]&=2\bar f_2\;.
  \end{aligned}
\end{equation}

For~$t\ge1$, we can proceed similarly.
We extend~$f$ and~$g$ to~$\tau^{-1}[0,1)$ by
\begin{equation}\label{M2.8}
  f=\frac{4-4\tau\cdot\phy(\tau)}{4+(p_+-4)\cdot\phy(\tau)}
	\qquad\text{and}\qquad
  g=-\frac{q_+}4\,f'\;,
\end{equation}
such that
\begin{equation}\label{M2.9}
  \begin{aligned}\relax
    f|_{[0,\eps)}&=1\;,&
    f|_{(1-\eps,1)}&=\frac 4{p_+}(1-\tau)\;,\\
    g|_{[0,\eps)}&=0\;,\qquad\text{and}&\qquad
    g|_{(1-\eps,1)}&=\frac{q_+}{p_+}\;.
  \end{aligned}
\end{equation}
We then modify the metric similarly,
such that we obtain a $g^{TM}$-orthonormal frame~$\bar e_0$, \dots, $\bar f_3$
that differs from~$e_0$, \dots, $f_3$ only by
\begin{equation*}
  \bar e_2=\frac1f\,e_2+\frac gf\,f_2\;.
\end{equation*}
Again,
this metric extends smoothly over~$\tau^{-1}[0,1]$,
and it is also compatible along~$\tau^{-1}(0)$
with the metric chosen above on~$\tau^{-1}[-1,0]$.

\subsection{Orbifold Characteristic Numbers of the Base Space}\label{M3}

The base orbifold~$B=L\backslash M$ has a principal cohomogeneity one action by~$\SO(3)$, see~\eqref{M1.0}.
We define~$\tau\colon B\to[-1,1]$ similar as above.
Then we can describe~$\tau^{-1}(-1,1)$
as a product~$(-1,1)\times\Sp(1)/Q$. 
The projection~$M\to B$ becomes a Riemannian submersion
with respect to an invariant Riemannian metric~$g^{TB}$
on~$(-1,1)\times\Sp(1)/Q$ 
that degenerates over~$\{-1,1\}$.

By abuse of notation,
let~$\bar e_0$, \dots, $\bar e_3$ also denote the projection of
the vector fields above to~$B$,
then these vector fields form a $g^{TB}$-orthonormal frame everywhere,
and their nonzero Lie brackets on~$\tau^{-1}[-1,0]$ are completely described by
\begin{alignat*}2
  [\bar e_0,\bar e_1]&=-\frac{f'}f\,\bar e_1\;,\qquad&
  [\bar e_1,\bar e_2]&=\frac2f\,\bar e_3\;,\\
  [\bar e_2,\bar e_3]&=2f\,\bar e_1\;,\qquad\text{and}\qquad&
  [\bar e_3,\bar e_1]&=\frac2f\,\bar e_2\;.
\end{alignat*}

The Christoffel symbols of the Levi-Civita connection on~$B$
with respect to these fields over~$\tau^{-1}((-1,0])\subset B$ are given by
\begin{alignat*}2
  \Gamma_{10}^1&=-\Gamma_{11}^0=\frac{f'}f\;,&
  \Gamma_{12}^3&=-\Gamma_{13}^2=\frac 2f-f\;,\\
  \Gamma_{23}^1&=-\Gamma_{21}^3=f\;,&\qquad\text{and}\qquad
  \Gamma_{31}^2&=-\Gamma_{32}^1=f\;,
\end{alignat*}
those~$\Gamma_{ij}^k$ not listed above vanish.
The Riemannian curvature tensor as a $4\times 4$-matrix is given by
\begin{equation}\label{M3.2}
  R=\left(\begin{smallmatrix}
    0&-\frac{f''}f\,\bar e^{01}+2f'\,\bar e^{23}&
	f'\,\bar e^{13}&-f'\,\bar e^{12}\\
    \frac{f''}f\,\bar e^{01}-2f'\,\bar e^{23}&0&
	-f'\,\bar e^{03}+f^2\,\bar e^{12}&
	f'\,\bar e^{02}+f^2\,\bar e^{13}\\
    -f'\,\bar e^{13}&f'\,\bar e^{03}-f^2\,\bar e^{12}&
	0&2f'\,\bar e^{01}+(4-3f^2)\,\bar e^{23}\\
    f'\,\bar e^{12}&-f'\,\bar e^{02}-f^2\,\bar e^{13}&
	-2f'\,\bar e^{01}-(4-3f^2)\,\bar e^{23}&0\\
  \end{smallmatrix}\right)\;,
\end{equation}
with~$\bar e^{ij}$ shorthand for~$\bar e^{i}\wedge\bar e^{j}$.
Over~$\tau^{-1}[0,1)$, the matrix looks similar,
but with the matrix indices and the form indices~$1$, $2$, $3$ permuted
cyclically.

The Euler and Pontrijagin forms are thus given by
\begin{align}
  \begin{split}\label{M3.5}
    p_1\bigl(TB,\nabla^{TB}\bigr)
    &=\frac1{8\pi^2}\trace(R^2)
    =\frac1{\pi^2}\,\biggl(\frac{f'f''}f+4f'f^2-4f'\biggr)
	\,\bar e^{0123}\;,\\
	\text{and}\qquad
    e\bigl(TB,\nabla^{TB}\bigr)
    &=\frac1{4\pi^2}\Pf(R)
    =\frac1{4\pi^2}
	\,\biggl(6f^{\prime2}+3f''f-\frac{4f''}f\biggr)
	\,\bar e^{0123}\;.
  \end{split}
\end{align}
The fibre~$\R P^3/(\Z/2\Z)^2$ over~$t$
has volume~$\frac{\pi^2}4\,f(t)$.
By~\eqref{M2.4}, \eqref{M2.9}, and~\eqref{M3.5},
we get the orbifold characteristic numbers
\begin{align}
  \begin{split}\label{M3.6}
    \int_Bp_1\bigl(TB,\nabla^{TB}\bigr)
    &=\frac{f'(t)^2+2f(t)^4-4f(t)^2}8\Bigr|_{t=-1}^1
	=\frac2{p_+^2}-\frac2{p_-^2}\;,\\
    \int_Be\bigl(TB,\nabla^{TB}\bigr)
    &=\frac{3f'(t)f(t)^2-4f'(t)}{16}\Bigr|_{t=-1}^1
	=\frac1{p_-}+\frac1{p_+}\;.
  \end{split}
\end{align}

\subsection{Characteristic Numbers of the Seifert
\texorpdfstring{$S^3$}{S3}-Fibration}\label{M4}

The Seifert Fibration~$M\to B$ is a principal bundle
with structure group~$\Sp(1)$.
Let~$TX=\ker p_*$ denote the vertical tangent bundle.

A connection~$\omega\in\Hom(TM,TX)$
acts as the identity on~$TX$ and is $\Sp(1)$-invariant.
It is uniquely described by its horizontal bundle~$T^HM=\ker\omega$.
We define~$\omega$ such that
\begin{equation*}
  T^HM=\Span\{\bar e_0,\bar e_1,\bar e_2,\bar e_3\}\;,
\end{equation*}
then by~\eqref{M2.7}, its curvature~$\Omega$ is given by
\begin{equation}\label{M4.0}
  \Omega=
  \begin{cases}
    \Bigl(-\frac{g'}f\,\bar e^{01}+2g\,\bar e^{23}\Bigr)\,\bar f_1
	&\text{for~$t\in[-1,0]$, and}\\
    \noalign{\smallskip}
    \Bigl(-\frac{g'}f\,\bar e^{02}-2g\,\bar e^{13}\Bigr)\,\bar f_2
	&\text{for~$t\in[0,1]$.}
  \end{cases}
\end{equation}


The Seifert fibration~$M\to B$ is the unit sphere orbibundle
of the vector orbibundle
\begin{equation*}
  V=M\times_{\Sp(1)}\H\to B\;.
\end{equation*}
The vectors~$\bar f_1$, $\bar f_2$ correspond
to the elements~$i$, $j\in\sP(1)\subset\H$.
These act on~$\H\cong\R^4$ with Pfaffian~$\Pf(i)=\Pf(j)=1$,
hence the Euler form of the connection~$\nabla^V$ on~$V$ induced by~$\omega$
is given by
\begin{equation}\label{M4.1}
  e(V,\nabla^V)
  =-\frac{g'g}{\pi^2f}\,\bar e^0\wedge\bar e^1\wedge\bar e^2\wedge\bar e^3\;.
\end{equation}

As in~\eqref{M3.6}, integration over~$B$ gives the characteristic number
\begin{equation}\label{M4.2}
  \int_Be(V,\nabla^V)
  =-\int_{-1}^1\frac{g'(t)g(t)}4\,dt
  =-\frac{g(t)^2}8\Bigr|_{t=-1}^1
  =\frac{q_-^2}{8p_-^2}-\frac{q_+^2}{8p_+^2}\;.
\end{equation}

With these computations,
we can now compute the first term in the adiabatic limit
of formula~\eqref{C2.1} for the Eells-Kuiper invariant.
Recall that~$B\cong B\times\{e\}\subset\Lambda B$.

\begin{Proposition}\label{M4.P1}
  For the Seifert fibration~$p\colon M_{(p_-,q_-),(p_+,q_+)}\to B$,
  we have
  \begin{multline*}
    \frac12\int_B\hat A_{\Lambda B}\bigl(TB,\nabla^{TB}\bigr)
	\,2\eta_{\Lambda B}\bigl(D_{S^3}\bigr)
    +\frac1{2^5\cdot7}\int_B\hat L_{\Lambda B}\bigl(TB,\nabla^{TB}\bigr)
	\,2\eta_{\Lambda B}\bigl(B_{S^3}\bigr)\\
    =-\frac1{2^7\cdot7}\,\int_Be\bigl(V,\nabla^V\bigr)
    =\frac1{2^{10}\cdot7}
	\biggl(\frac{q_+^2}{p_+^2}-\frac{q_-^2}{p_-^2}\biggr)\;.
  \end{multline*}
\end{Proposition}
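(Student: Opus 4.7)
Over the trivial stratum $B \cong B \times \{e\} \subset \Lambda B$, the isotropy is $\gamma = \id$ with multiplicity $m(\gamma) = 1$, so that Theorem~\ref{A6.T1} identifies the two orbifold $\eta$-forms with evaluations of the infinitesimally equivariant $\eta$-invariants of \cite[Lemma~1.14]{Go} at the curvature $\Omega/(2\pi i)$, where $\Omega$ is the $\sP(1)$-valued form computed in \eqref{M4.0}. For $S^3 = \Sp(1)$ with its bi-invariant metric, explicit power-series formulas for $\eta_{\sP(1)}(D_{S^3})$ and $\eta_{\sP(1)}(B_{S^3})$ are given in \cite[Theorem~3.9]{Go}.

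The proposition reduces to the pointwise identity of degree-$4$ differential forms on $B$,
\[
  \tfrac12\,\hat A\bigl(TB,\nabla^{TB}\bigr)\,\eta_{\sP(1),\,\Omega/(2\pi i)}(D_{S^3})
  +\tfrac1{2^5\cdot 7}\,\hat L\bigl(TB,\nabla^{TB}\bigr)\,\eta_{\sP(1),\,\Omega/(2\pi i)}(B_{S^3})
  =-\tfrac1{2^7\cdot 7}\,e\bigl(V,\nabla^V\bigr)\;.
\]
Since $\dim B = 4$, only finitely many terms of the two power series in $\Omega$ contribute. The degree-$0$ coefficients vanish because the untwisted Dirac operator and the signature operator on $S^3$ with its round metric have vanishing $\eta$-invariants, so the nontrivial content lives in the terms of polynomial degree $1$ and $2$ in the curvature. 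Expanding $\hat A(TB) = 1 - p_1(TB,\nabla^{TB})/24 + \cdots$ and the rescaled $\hat L(TB,\nabla^{TB})$ of Section~\ref{A5}, substituting the curvature matrix \eqref{M3.2} together with the explicit $\Omega$ from \eqref{M4.0}, and combining with the formulas from \cite[Theorem~3.9]{Go}, both sides become explicit polynomials in $f, f', f'', g, g'$ (times $\bar e^{0123}$) which may be compared directly. Once the pointwise identity is established, the integral value follows by \eqref{M4.2}.

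\textbf{Main obstacle.} The heart of the argument is the verification of the displayed polynomial identity. Although the computation is elementary once all ingredients are assembled, it demands careful bookkeeping of normalisations: the factor $2^{(\dim B - N^B)/2}$ distinguishing $\hat L$ from the usual $L$-class, the convention following \eqref{A3.3} that absorbs the $2\pi i$-factors into the differential form, and the sign ambiguity of the splitting \eqref{B6.2} which here is fixed by $\gamma = \id$. A useful independent check is the smooth case $p_\pm = q_\pm = 1$ of Example~\ref{M8.X1}, in which $p\colon M \to B = S^4$ is a principal $\Sp(1)$-bundle and the Eells-Kuiper invariant is known by \cite{CE}; matching the adiabatic formula against this value pins down the universal constant and confirms all signs.
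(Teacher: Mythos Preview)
Your strategy is correct and uses the right ingredients, but you are working harder than necessary. The paper's proof exploits a simplification you overlook: by \cite[Theorem~3.9]{Go}, the infinitesimally equivariant $\eta$-invariants for~$S^3$ evaluate at~$\Omega/(2\pi i)$ to \emph{pure} degree-$4$ forms, namely
\[
  2\eta_{\Lambda B}(D_{S^3})\big|_B=-\tfrac1{960}\,e(V,\nabla^V)
  \qquad\text{and}\qquad
  2\eta_{\Lambda B}(B_{S^3})\big|_B=-\tfrac1{30}\,e(V,\nabla^V)\;.
\]
There is no degree-$2$ piece because an $\Ad$-invariant polynomial on~$\sP(1)$ of degree~$1$ vanishes, so your ``polynomial degree~$1$ in the curvature'' terms are automatically zero. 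Consequently only the degree-$0$ components $\hat A(TB)^{[0]}=1$ and $\hat L(TB)^{[0]}=2^{\dim B/2}=4$ enter, and the identity collapses to the one-line arithmetic
\[
  \tfrac12\cdot\bigl(-\tfrac1{960}\bigr)+\tfrac1{2^5\cdot7}\cdot4\cdot\bigl(-\tfrac1{30}\bigr)
  =-\tfrac1{2^7\cdot7}\;.
\]
No expansion of~$\hat A$ or~$\hat L$ to higher order, no use of the curvature matrix~\eqref{M3.2}, and no polynomial comparison in~$f,f',f'',g,g'$ is needed. Your proposed sanity check against Example~\ref{M8.X1} is a reasonable precaution, but once you read off the two constants from \cite[Theorem~3.9]{Go} the computation is immediate.
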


\begin{proof}
  Let~$V\to B$ be the induced vector bundle with connection~$\nabla^V$
  as above.
  The $\eta$-form of the untwisted fibrewise Dirac operator and the fibrewise
  signature operator are given by
  \begin{align*}
    2\eta_{\Lambda B}\bigl(D_{S^3}\bigr)|_B
    &=\eta_{\frac\Omega{2\pi i}}\bigl(D_{S^3}\bigr)
    =-\frac1{960}\,e\bigl(V,\nabla^V\bigr)\\
	\text{and}\qquad
    2\eta_{\Lambda B}\bigl(B_{S^3}\bigr)|_B
    &=\eta_{\frac\Omega{2\pi i}}\bigl(B_{S^3}\bigr)
    =-\frac1{30}\,e\bigl(V,\nabla^V\bigr)\in\Omega^4(B)
  \end{align*}
  by Theorem~\ref{A6.T1} and~\cite[Theorem~3.9]{Go}.

  Because both $\eta$-forms are homogeneous of degree~$4$,
  we only need the degree zero components of~$\hat A$ and~$\hat L$,
  which are given by
  \begin{equation*}
    \hat A\bigl(TB,\nabla^{TB}\bigr)^{[0]}=1
	\qquad\text{and}\qquad
    \hat L\bigl(TB,\nabla^{TB}\bigr)^{[0]}=2^{\tfrac{\dim TB}2}=4\;.
  \end{equation*}

  From the above and~\eqref{M4.2},
  we obtain our result because
  \begin{align*}
    \frac12\int_B\hat A\bigl(TB,\nabla^{TB}\bigr)
	\,\eta_{\frac\Omega{2\pi i}}\bigl(D_{S^3}\bigr)
    &=\frac1{2^{10}\cdot3\cdot5}
	\biggl(\frac{q_+^2}{p_+^2}-\frac{q_-^2}{p_-^2}\biggr)\\
    \text{and}\quad
    \frac1{2^5\cdot7}\int_B\hat L\bigl(TB,\nabla^{TB}\bigr)
	\,\eta_{\frac\Omega{2\pi i}}\bigl(B_{S^3}\bigr)
    &=\frac1{2^7\cdot3\cdot5\cdot7}
	\biggl(\frac{q_+^2}{p_+^2}-\frac{q_-^2}{p_-^2}\biggr)\;.\qedhere
  \end{align*}
\end{proof}

\subsection{The Contributions from the Twisted Sectors}\label{M5}

To compute the contribution from the twisted sectors,
we need some equivariant characteristic numbers
and the equivariant $\eta$-forms of the pullback of~$M$ to~$\tilde B_\pm$.
Let~$(p,(\gamma_\pm^a))\in\Lambda B\setminus B$,
let~$N_\pm\to B_\pm$ the normal bundle of~$B_\pm\cong\R P^2$ in~$B$,
and let~$\tilde N_\pm$ denote its pullback to~$\tilde B_\pm\cong S^2$.

In an orbifold chart,
the elements~$\gamma_\pm^a$ for~$a=1$, \dots, $\frac{p_\pm-1}2$
act on~$\tilde N_\pm$ by multiplication with~$e^{8\pi ia\tfrac1{p_\pm}}\in S^1\cong\SO(2)$,
see Proposition~\ref{M1.P1}~\eqref{M1.P1.3}.
Because~$\Gamma_\pm\cong\Z/p_\pm\Z$ is an odd cyclic group,
this action has a unique lift to~$\Spin(2)$,
represented by
	$$\tilde\gamma_\pm^a=e^{4\pi ia\tfrac1{p_\pm}}\in S^1\cong\Spin(2)\;.$$
This lift provides us with a unique section of the bundle~$\widetilde{\Lambda B}\to\Lambda B$ of~\eqref{A2.4}. All forms in~$\Omega^\bullet(\Lambda B;\widetilde{\Lambda B})$ and in~$\Omega^\bullet(\Lambda B;\widetilde{\Lambda B}\otimes o(\Lambda B))$ will be computed with respect to this lift and with respect to the orientation of~$\tilde B_\pm\cong S^2$ with volume form~$\bar e^{23}$ or~$\bar e^{31}$, respectively.

The curvature~$R^{\tilde{N}_-}$ can be computed as the limit
of~$\<R\bar e_0,\bar e_1\>|_{\Span\{\bar e_2,\bar e_3\}}$ as~$t\to-1$,
so by~\eqref{M3.2} and~\eqref{M2.4} we have
\begin{equation*}
  R^{\tilde{N}_-}
  =\begin{pmatrix}0&2f'\,\bar e^{23}\\-2f'\,\bar e^{23}&0\end{pmatrix}
  =-\frac{8i}{p_-}\,\bar e^{23}
\end{equation*}
with~$\bar e_1=i\bar e_0$.
The induced curvature of the spinor bundle at the origin is
\begin{equation*}
  R^{S^\pm\tilde N_-}=\mp\frac{4i}{p_-}\,\bar e^{23}
\end{equation*}
and similarly for~$\tilde N_+$.
By~\eqref{A2.6}--\eqref{A2.3},
the orbifold $\hat A$-form on~$\tilde B_-\times\{a\}\subset\Lambda B$
is represented by
\begin{multline}\label{M5.1}
    \hat A_{\Lambda B}\bigl(TB,\nabla^{TB}\bigr)
    =-\frac{\hat A(TB_-,\nabla^{TB_-})}
	{m(\tilde\gamma_-^a)
		\,\ch_{\tilde\gamma_-^a}(S^{+}\tilde N_--S^{-}\tilde N_-,
		\nabla^{S\tilde N_{B_-}})}\\
    =-\frac1{p_-\cdot 2i\sin\bigl(\frac 4{p_-}
		\bigl(\pi a+\frac{\bar e^{23}}{2\pi i}\bigr)\bigr)}
      \quad\in\Omega^\bullet\bigl(\tilde B_-\bigr)
\end{multline}

A similar computation gives the orbifold $\hat L$-form
\begin{multline}\label{M5.2}
    \hat L_{\Lambda B}(TB,\nabla^{TB})
    =\hat A_{\Lambda B}(TB,\nabla^{TB})
	\,\ch_{\Lambda B}\bigl(\mathcal S^+B_-
		+\mathcal S^-B_-,\nabla^{\mathcal SB_-}\bigr)\\
    =\frac{2i}{p_-}\,\cot\biggl(\frac4{p_-}\biggl(\pi a
		+\frac{\bar e^{23}}{2\pi i}\biggr)\biggr)
      \quad\in\Omega^\bullet\bigl(\tilde B_-\bigr)\;.
\end{multline}


We also need the equivariant $\eta$-forms of~$G|_{B_\pm}\to B_\pm$.
We know by Proposition~\ref{M1.P1}~\eqref{M1.P1.4}
that~$\gamma_\pm^a$ act on the fibres~$S^3$ as
\begin{equation*}
  \gamma_-^a=e^{2\pi i\,\frac{aq_-}{p_-}}\qquad\text{and}\qquad
  \gamma_+^a=e^{2\pi j\,\frac{aq_+}{p_+}}\;,
\end{equation*}
and the curvatures at~$B_\pm$ are given by~\eqref{M2.4} and~\eqref{M4.0} as
\begin{equation*}
  \Omega_-=-\frac{2q_-}{p_-}\,\bar e^{23}\,\bar f_1\qquad\text{and}\qquad
  \Omega_+=-\frac{2q_+}{p_+}\,\bar e^{31}\,\bar f_2\;.
\end{equation*}
We note that both the curvature and the action of~$\Gamma_\pm$
are $L$-invariant,
so both act from the same side on the generic fibre~$S^3\cong\Sp(1)$.

We compute the mixed equivariant $\eta$-invariant,
from which we derive the orbifold $\eta$-form of Definition~\ref{A3.D2}
using Theorem~\ref{A6.T1}.
We use the formulas for the equivariant $\eta$-invariants of
the untwisted Dirac operator~$D_{S^3}$ in~\cite{HR} and of the
odd signature operator~$B_{S^3}$ in~\cite{APS}.
On~$\tilde B_-\times\{a\}\subset\Lambda B$, we obtain in particular
\begin{align}
  \begin{split}\label{M5.4}
    2\eta_{\Lambda B}(D_{S^3})
    &=\eta_{\tilde\gamma_-^a\,e^{-\frac{\Omega_-}{2\pi i}}}(D_{S^3})
    =-\frac1{2\sin^2\bigl(\frac{q_-}{p_-}\bigl(\pi a
		+\frac{\bar e^{23}}{2\pi i}\bigr)\bigr)}\\
	\text{and}\qquad
    2\eta_{\Lambda B}(B_{S^3})
    &=\eta_{\tilde\gamma_-^a\,e^{-\frac{\Omega_-}{2\pi i}}}(B_{S^3})
    =-\cot^2\biggl(\frac{q_-}{p_-}\biggl(\pi a
		+\frac{\bar e^{23}}{2\pi i}\biggr)\biggr)
  \end{split}
\end{align}

We can now compute the contribution from the singular orbits~$M_\pm$
to the adiabatic limit of the $\eta$-invariants
and the Eells-Kuiper invariant
and relate it to the generalised Dedekind sums~$D(p,q)$
of Definition~\ref{C2.D1}.

\begin{Proposition}\label{M5.P1}
  The singular orbits~$M_\pm$ contribute to the Eells-Kuiper invariant by
  the generalised Dedekind sums
  \begin{multline*}
    \int_{\tilde B_-\times\left\{1,\dots,\frac{p_--1}2\right\}}
	\biggl(\frac12\hat A_{\Lambda B}\bigl(TB,\nabla^{TB}\bigr)
		\,2\eta_{\Lambda B}(D_{S^3})\\
	+\frac1{2^5\cdot 7}\hat L_{\Lambda B}\bigl(TB,\nabla^{TB}\bigr)
		\,2\eta_{\Lambda B}(B_{S^3})\biggr)
      =D(p_-,q_-)\;,
  \end{multline*}
  \begin{multline*}
   \int_{\tilde B_+\times\left\{1,\dots,\frac{p_+-1}2\right\}}
	\biggl(\frac12\hat A_{\Lambda B}\bigl(TB,\nabla^{TB}\bigr)
		\,2\eta_{\Lambda B}(D_{S^3})\\
	+\frac1{2^5\cdot 7}\hat L_{\Lambda B}\bigl(TB,\nabla^{TB}\bigr)
		\,2\eta_{\Lambda B}(B_{S^3})\biggr)
      =-D(p_+,q_+)\;.
  \end{multline*}
\end{Proposition}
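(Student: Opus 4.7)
The plan is to substitute the closed-form expressions~\eqref{M5.1}, \eqref{M5.2} and~\eqref{M5.4} for the orbifold $\hat A$-form, $\hat L$-form and orbifold $\eta$-forms into the integrand, to observe that the two contributions combine into a single compact expression, and then to extract and integrate the degree-$2$ component over $\tilde B_\pm\cong S^2$ using that the relevant $2$-form is nilpotent of order two.

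On the stratum $\tilde B_-\times\{a\}$, set $u=\tfrac{4}{p_-}(\pi a+\alpha)$ and $v=\tfrac{q_-}{p_-}(\pi a+\alpha)$ with $\alpha=\bar e^{23}/(2\pi i)$, so that $\alpha^2=0$. Using $\tfrac18=\tfrac{14}{2^4\cdot 7}$ to combine the two fractions over a common denominator, the integrand collapses to
\begin{equation*}
  \tfrac12\,\hat A_{\Lambda B}\bigl(TB,\nabla^{TB}\bigr)\,2\eta_{\Lambda B}(D_{S^3})
  +\tfrac1{2^5\cdot 7}\,\hat L_{\Lambda B}\bigl(TB,\nabla^{TB}\bigr)\,2\eta_{\Lambda B}(B_{S^3})
  =-\frac{i\,(14+\cos u\cos^2 v)}{2^4\cdot 7\,p_-\sin u\sin^2 v}\;.
\end{equation*}
This cancellation is the geometric origin of the constant ``$14$'' appearing in Definition~\ref{C2.D1}. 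Differentiating once in $\alpha$ and using the Pythagorean identity to collapse the resulting terms yields the degree-$2$ part
\begin{equation*}
  4i\,\alpha\biggl(\frac{14\cos u_0+\cos^2 v_0}{2^4\cdot 7\,p_-^2\sin^2 u_0\sin^2 v_0}
    +\frac{q_-\cos v_0\,(14+\cos u_0)}{2^5\cdot 7\,p_-^2\sin u_0\sin^3 v_0}\biggr)\;,
\end{equation*}
with $u_0=4\pi a/p_-$ and $v_0=q_-\pi a/p_-$; the bracketed factor is precisely the $a$-th summand of $D(p_-,q_-)$.

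What remains is to compute $\int_{\tilde B_-}\alpha$. Section~\ref{M1} identifies $B_-=\Sp(1)/\Pin(2)$ endowed with the quotient of the bi-invariant metric on $\Sp(1)$ in which $\bar e_1,\bar e_2,\bar e_3$ are orthonormal; from $\mathrm{Vol}(\Sp(1))=2\pi^2$ and $\mathrm{Vol}(\Pin(2))=4\pi$ one obtains $\mathrm{Vol}(B_-)=\pi/2$ and $\mathrm{Vol}(\tilde B_-)=\pi$, which is consistent with the orbifold Gauss--Bonnet identity $\int_B e(TB,\nabla^{TB})=1/p_-+1/p_+=\chi^{\mathrm{orb}}(B)$ of~\eqref{M3.6}. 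Thus $\int_{\tilde B_-}\alpha=1/(2i)$, so each stratum contributes exactly twice the $a$-th summand of $D(p_-,q_-)$. Since $q_-$ is odd, the substitution $a\mapsto p_--a$ leaves the summand invariant (the sign changes in $\cos(q_-\pi a/p_-)$ and in $\sin(4\pi a/p_-)$ cancel within each of the two terms), so $\sum_{a=1}^{(p_--1)/2}$ reproduces the full sum $\sum_{a=1}^{p_--1}$ divided by $2$, i.e.\ $D(p_-,q_-)$.

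The $\tilde B_+$ computation is entirely parallel, replacing $(p_-,q_-,\bar e^{23})$ by $(p_+,q_+,\bar e^{31})$. The overall minus sign in front of $D(p_+,q_+)$ arises from the sign reversal $f'(1)=-4/p_+$ versus $f'(-1)=+4/p_-$ at the opposite singular orbit, which propagates through the normal bundle curvature $R^{\tilde N_+}$ into the analogues of~\eqref{M5.1} and~\eqref{M5.4}. The main obstacle of the argument is exactly this orientation and sign bookkeeping, together with the consistent choice of spin lifts $\tilde\gamma_\pm^a$ fixed in Section~\ref{M5} and the normalisation factor $1/m(\gamma_\pm^a)=1/p_\pm$ of~\eqref{A2.3}; once these ingredients are matched correctly, the local trigonometric identity above globalises directly to the claimed Dedekind sums $\pm D(p_\pm,q_\pm)$.
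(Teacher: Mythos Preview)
Your proof is correct and follows essentially the same route as the paper's own argument: substitute the local expressions~\eqref{M5.1}--\eqref{M5.4}, extract the degree-$2$ component by a single $\alpha$-derivative using $\alpha^2=0$, integrate $\alpha=\bar e^{23}/(2\pi i)$ over the round $S^2$ of volume~$\pi$, and then halve the full sum via the $a\mapsto p_--a$ symmetry. The one cosmetic difference is that you first collapse the $\hat A$- and $\hat L$-contributions into the single fraction $-\tfrac{i(14+\cos u\cos^2 v)}{2^4\cdot 7\,p_-\sin u\sin^2 v}$ and differentiate that, whereas the paper keeps the two terms separate and differentiates $\tfrac{1}{8ip_-\sin(4x/p_-)\sin^2(q_-x/p_-)}-\tfrac{i}{2^4\cdot 7\,p_-}\cot(4x/p_-)\cot^2(q_-x/p_-)$ directly; the resulting trigonometric identity is of course the same, and your packaging makes the appearance of the constant~$14$ in Definition~\ref{C2.D1} a bit more transparent.
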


\begin{proof}
  The twisted sectors~$\tilde B_\pm\times\{a\}$ are spheres of
  sectional curvature~$4$ by~\eqref{M3.2},
  in particular, their volume is~$\pi$.
  We combine~\eqref{A2.3} and~\eqref{A3.E1}
  with~\eqref{M5.1}--\eqref{M5.4} and find that
  \begin{multline*}
    \int_{\tilde B_-\times\{a\}}
	\biggl(\frac12\hat A_{\Lambda B}\bigl(TB,\nabla^{TB}\bigr)
		\,2\eta_{\Lambda B}(D_{S^3})
	+\frac1{2^5\cdot 7}\hat L_{\Lambda B}\bigl(TB,\nabla^{TB}\bigr)
		\,2\eta_{\Lambda B}(B_{S^3})\biggr)\\
    \begin{aligned}
      &=\int_{\tilde B_-}\Biggl(\frac1{p_-\cdot 8i\cdot\sin\bigl(\frac 4{p_-}
		\bigl(\pi a+\frac{\bar e^{23}}{2\pi i}\bigr)\bigr)
	        \cdot\sin^2\bigl(\frac{q_-}{p_-}\bigl(\pi a
		+\frac{\bar e^{23}}{2\pi i}\bigr)\bigr)}\\
      &\kern4em
	-\frac i{2^4\cdot 7\,p_-}\cdot\cot\biggl(\frac4{p_-}\biggl(\pi a
		+\frac{\bar e^{23}}{2\pi i}\biggr)\biggr)
	\cdot\cot^2\biggl(\frac{q_-}{p_-}\biggl(\pi a
		+\frac{\bar e^{23}}{2\pi i}\biggr)\biggr)\Biggr)\\
      &=\frac d{dx}\biggr|_{x=\pi a}\biggl(\frac1{8i\,p_-\,\sin\frac{4x}{p_-}
	        \,\sin^2\frac{q_-x}{p_-}}
	-\frac i{2^4\cdot 7\,p_-}\,\cot\frac{4x}{p_-}
	\,\cot^2\frac{q_-x}{p_-}\biggr)\,
	\int_{\tilde B_-}\frac{\bar e^{23}}{2\pi i}\\
      &=\frac{14\cos\frac{4\pi a}{p_-}+\cos^2\frac{q_-\pi a}{p_-}}
	{2^3\cdot 7\,p_-^2\,\sin^2\frac{4\pi a}{p_-}\sin^2\frac{q_-\pi a}{p_-}}
	+\frac{q_-\cos\frac{q_-\pi a}{p_-}
		\bigl(14+\cos\frac{4\pi a}{p_-}\bigr)}
	{2^4\cdot 7\,p_-^2\,\sin\frac{4\pi a}{p_-}\sin^3\frac{q_-\pi a}{p_-}}\;.
    \end{aligned}
  \end{multline*}
  To obtain the first equation above,
  we note that the summands for~$a$ and~$p_--a$ in Definition~\ref{C2.D1}
  are identical.
  The second equation is proved similarly.
\end{proof}

\subsection{Cheeger-Simons terms}\label{M6}
For the computation of~$\mu(M_{(p_-,q_-),(p_+,q_+)})$ using formula~\eqref{C2.1},
it remains to compute the Cheeger-Simons correction term in the adiabatic
limit.

If we regard the limit of the Levi-Civita connections on~$(M,g_\eps)$
as in~\eqref{B1.3}, we find that
	$$\lim_{\eps\to 0}p_1\bigl(TM,\nabla^{TM,\eps}\bigr)
	=p_1\bigl(TX,\nabla^{TX}\bigr)+p^*p_1\bigl(TB,\nabla^{TB}\bigr)\;.$$
The form~$p_1\bigl(TB,\nabla^{TB}\bigr)$ has already been determined
in~\eqref{M3.5}.
By the variation formula for Cheeger-Simons classes,
it is clear that
\begin{multline}\label{M4.4}
  \lim_{\eps\to 0}\int_M(p_1\wedge\hat p_1)\bigl(TM,\nabla^{TM,\eps}\bigr)\\
  =\int_M\Bigl(p_1\bigl(TX,\nabla^{TX}\bigr)
	+p^*p_1\bigl(TB,\nabla^{TB}\bigr)\Bigr)\\
	\wedge\Bigl(\hat p_1\bigl(TX,\nabla^{TX}\bigr)
	+\hat p_1\bigl(p^*TB,\nabla^{p^*TB}\bigr)\Bigr)\;,
\end{multline}
where again
\begin{align*}
  d\hat p_1\bigl(TX,\nabla^{TX}\bigr)&=p_1\bigl(TX,\nabla^{TX}\bigr)\\
	\text{and}\qquad
  d\hat p_1\bigl(p^*TB,\nabla^{p^*TB}\bigr)&=p^*p_1\bigl(TB,\nabla^{TB}\bigr)\;.
\end{align*}
Note that since~$H^4_{\mathrm{dR}}(B)\ne 0$,
we cannot expect to construct~$\hat p_1\bigl(TB,\nabla^{TB}\bigr)\in\Omega^3(B)$.

We start by computing~$p_1(TX,\nabla^{TX})$.
The connection~$\nabla^{TX}$ is defined as the compression of the Levi-Civita
connection~$\nabla^{TM}$ on~$M$ to~$TX$.
Hence, we can compute it with respect to the basis~$\bar f^1$, $\bar f^2$,
$\bar f^3$ of~$TX$ using~\eqref{M2.7}.
Its connection one-form is given by
\begin{equation*}
  \omega^{TX}=
  \begin{pmatrix}
    0&-\bar f^3&\bar f^2\\
    \bar f^3&0&-\frac{2g}f\,\bar e^1-\bar f^1\\
    -\bar f^2&\frac{2g}f\,\bar e^1+\bar f^1&0
  \end{pmatrix}\;.
\end{equation*}
The corresponding curvature is then given by
\begin{align*}
  \Omega^{TX}
  &=d\omega^{TX}+\omega^{TX}\wedge\omega^{TX}\\
  &=\begin{pmatrix}
    0&\bar f^{12}&\bar f^{13}\\
    -\bar f^{12}&0&-\frac{g'}f\,\bar e^{01}+2g\,\bar e^{23}+\bar f^{23}\\
    -\bar f^{13}&\frac{g'}f\,\bar e^{01}-2g\,\bar e^{23}-\bar f^{23}&0
  \end{pmatrix}\;.
\end{align*}

Note that since the group~$G=\SO(3)\times\SO(3)$ does not act freely
on~$\tau^{-1}\{-1,1\}$,
the basis~$(\bar f_1,\bar f_2,\bar f_3)$ does not extend over~$M_\pm$.
Hence, the form~$\omega^{TX}$ and its curvature~$\Omega^{TX}$
are not necessarily smooth at~$t=\pm 1$.
Nevertheless,
the Pontrijagin form~$p_1(TX,\nabla^{TX})$ will be smooth.
It is given by
\begin{multline}\label{M4.3}
  p_1\bigl(TX,\nabla^{TX}\bigr)
  =\frac1{8\pi^2}\,\trace\bigl((\Omega^{TX})^2\bigr)\\
  =\frac1{4\pi^2}\,\biggl(\frac{4gg'}f\,\bar e^{0123}
	+\frac{2g'}f\,\bar e^{01}\bar f^{23}
	-4g\,\bar e^{23}\bar f^{23}\biggr)\;.
\end{multline}

The forms~$p_1(TX,\nabla^{TX})$ and~$p^*p_1(TB,\nabla^{TB})$
are clearly $G$-invariant.
We will now construct $G$-invariant forms~$\hat p_1(TX,\nabla^{TX})$
and~$\hat p_1(p^*TB,\nabla^{p^*TB})$.
The complex of smooth $G$-invariant forms on~$M$
can be described as
	$$\Omega^\bullet(M)^G
	=\bigl(C^\infty([-1,1])\otimes\Lambda^\bullet\R^7\bigr)
	\cap\Omega^\bullet(M)\;,$$
where~$\R^7$ is spanned by the dual basis~$\bar e^0$, \dots, $\bar f^3$
to the basis~$\bar e_0$, \dots, $\bar f_3$ of section~\ref{M2}.
Smoothness at the singular orbits gives boundary conditions.
In particular,
functions on~$[-1,1]$ extend to smooth $G$-invariant functions
if and only if they are even at~$\pm1$,
and among others,
the monomials~$f\bar e^0$, $f\bar e^1$, $\bar e^{01}$, $\bar e^{23}$,
$\bar f^1$, $\bar f^{23}$ are smooth at~$M_-$,
and~$f\bar e^0$, $f\bar e^2$, $\bar e^{02}$, $\bar e^{31}$, $\bar f^2$,
$\bar f^{31}$ are smooth at~$M_+$.

From~\eqref{M2.7} and Cartan's formula for the exterior derivative,
we deduce that on~$\tau^{-1}(-1,0]$,
\begin{equation*}
  \begin{aligned}
    dh&=h'\,\bar e^0\;,&
    d\bar e^0&=0\;,\\
    d\bar e^1&=\frac{f'}f\,\bar e^{01}-2f\,\bar e^{23}\;,&
    d\bar f^1&=-\frac{g'}f\,\bar e^{01}+2g\,\bar e^{23}-2\,\bar f^{23}\;,\\
    d\bar e^2&=\frac2f\,\bar e^{13}\;,&
    d\bar f^2&=\biggl(\frac{2g}f\,\bar e^1+2\,\bar f^1\biggr)\,\bar f^3\;,\\
    d\bar e^3&=-\frac2f\,\bar e^{12}\;,\qquad\text{and}&\qquad
    d\bar f^3&=-\biggl(\frac{2g}f\,\bar e^1+2\,\bar f^1\biggr)\,\bar f^2\;,\\
  \end{aligned}
\end{equation*}
for functions~$h$ of~$\tau$.
Similar formulas with the indices~$1$, $2$, $3$ rotated
hold over~$\tau^{-1}[0,1)$.

From this we conclude that
\begin{equation}\label{M6.2}
  d\biggl(\frac1f\,\bar e^{123}\biggr)
  =0
	\qquad\text{and}\qquad
  d\bar f^{123}
  =\biggl(-\frac{g'}f\,\bar e^{01}+2g\,\bar e^{23}\biggr)\,\bar f^{23}\;.
\end{equation}
We also find that over~$[-1,0]$,
\begin{align}\begin{split}\label{M6.3}
  d\biggl(\frac{g'}{f}\,\bar e^{01}-2g\,\bar e^{23}-2\bar f^{23}\biggr)
  &=d\bigl(-d\bar f^1-4\bar f^{23}\bigr)=0\;,\\
  d\biggl(\biggl(\frac{g'}{f}\,\bar e^{01}-2g\,\bar e^{23}-2\bar f^{23}\biggr)
	\,\bar f^1\biggr)
  &=\biggl(\frac{g'}{f}\,\bar e^{01}-2g\,\bar e^{23}-2\bar f^{23}\biggr)\\
  &\qquad\cdot
    \biggl(-\frac{g'}{f}\,\bar e^{01}+2g\,\bar e^{23}-2\bar f^{23}\biggr)\\
  &=\frac{4gg'}f\,\bar e^{0123}\;.
\end{split}\end{align}
Similarly over~$[0,1]$, we have
\begin{equation}\label{M6.4}
  d\biggl(\biggl(\frac{g'}{f}\,\bar e^{02}-2g\,\bar e^{31}-2\bar f^{31}\biggr)
	\,\bar f^2\biggr)
  =\frac{4gg'}f\,\bar e^{0123}\;.
\end{equation}

Thus,
if we put
\begin{equation*}
  \hat p_1\bigl(TX,\nabla^{TX}\bigr)=\frac1{4\pi^2}
  \begin{cases}
    \frac{g'}{f}\,\bar e^{01}\bar f^1-2g\,\bar e^{23}\bar f^1-4\bar f^{123}
	&\text{on~$[-1,0]$, and}\\
    \frac{g'}{f}\,\bar e^{02}\bar f^2-2g\,\bar e^{31}\bar f^2-4\bar f^{123}
	&\text{on~$[0,1]$,}
  \end{cases}
\end{equation*}
then the form~$\hat p_1(TX,\nabla^{TX})$ is smooth on~$M$
because near~$\tau^{-1}(0)$, only the term~$-4\bar f^{123}$ is present.
From~\eqref{M4.3}--\eqref{M6.4},
we immediately find that
\begin{equation}\label{M6.5}
  d\hat p_1\bigl(TX,\nabla^{TX}\bigr)
  =p_1\bigl(TX,\nabla^{TX}\bigr)\;.
\end{equation}

For the next step,
we assume that~$q_+p_-\ne q_-p_+$,
because~$H^4(M;\R)=0$ in this case by Theorem~13.1 in~\cite{GWZ}.
Recall that by~\eqref{M2.3} and~\eqref{M2.8},
we have
\begin{equation*}
  f'(t)\,f''(t)=
  \begin{cases}
    \frac{16}{q_-^2}\,g(t)\,g'(t)	&\text{if~$t\in[-1,0]$, and}\\
    \frac{16}{q_+^2}\,g(t)\,g'(t)	&\text{if~$t\in[0,1]$.}
  \end{cases}
\end{equation*}

We now consider the form
\begin{multline}\label{M6.6}
  \hat p_1\bigl(p^*TB,\nabla^{p^*TB}\bigr)
  =\frac4{\pi^2}\,\frac{p_+^2-p_-^2}{q_-^2p_+^2-q_+^2p_-^2}
	\,\biggl(\frac{g'}{f}\,\bar e^{01}-2g\,\bar e^{23}-2\bar f^{23}\biggr)
	\,\bar f^1\\
  +\frac1{2\pi^2}\,\biggl(f^{\prime2}
	-16\,\frac{p_+^2-p_-^2}{q_-^2p_+^2-q_+^2p_-^2}\,g^2
	-16\,\frac{q_-^2-q_+^2}{q_-^2p_+^2-q_+^2p_-^2}
	+2f^4-4f^2\biggr)\,\frac1f\,\bar e^{123}
\end{multline}
over~$[-1,0]$ and similarly over~$[0,1]$ using~\eqref{M6.4}.
Using~\eqref{M2.3}, \eqref{M2.4}, \eqref{M2.8} and~\eqref{M2.9},
we can check that the coefficient of~$\bar e^{123}$ vanishes to first order
near~$\pm 1$,
so the form above is indeed smooth.
By~\eqref{M3.5} and~\eqref{M6.2}--\eqref{M6.4},
we conclude that
\begin{equation}\label{M6.7}
  d\hat p_1\bigl(p^*TB,\nabla^{p^*TB}\bigr)
  =\frac1{\pi^2}\,\biggl(\frac{f'f''}f+4f^2f'-4f'\biggr)\,\bar e^{0123}
  =p^*p_1\bigl(TB,\nabla^{TB}\bigr)\;.
\end{equation}

We can now compute the Cheeger-Simons correction term in the Eells-Kuiper
invariant.

\begin{Proposition}\label{M6.P1}
  The adiabatic limit of the Cheeger-Simons term is given by
  \begin{multline*}
    \frac1{2^7\cdot 7}\lim_{\eps\to0}\int_M
	(p_1\wedge\hat p_1)\bigl(TM,\nabla^{TM,\eps}\bigr)\,\\
    =\frac{(p_+^2-p_-^2)^2}
	{2^2\cdot 7\,p_-^2p_+^2(q_-^2p_+^2-q_+^2p_-^2)}
      +\frac{2^6(p_+^2-p_-^2)+3(q_-^2p_+^2-q_+^2p_-^2)}
	{2^{10}\cdot 7\,p_-^2p_+^2}
  \end{multline*}
\end{Proposition}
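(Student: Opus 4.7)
The plan is to substitute into the right-hand side of~\eqref{M4.4} the explicit $G$-invariant formulas from~\eqref{M4.3} for $p_1(TX,\nabla^{TX})$, from~\eqref{M3.5} for $p_1(TB,\nabla^{TB})$, from the definition just above~\eqref{M6.5} for $\hat p_1(TX,\nabla^{TX})$, and from~\eqref{M6.6} for $\hat p_1(p^*TB,\nabla^{p^*TB})$, and then reduce everything to elementary integrals over $\tau\in[-1,1]$. Each of the four wedge products
$p_1(TX)\wedge\hat p_1(TX)$, $p_1(TX)\wedge\hat p_1(p^*TB)$, $p^*p_1(TB)\wedge\hat p_1(TX)$, and $p^*p_1(TB)\wedge\hat p_1(p^*TB)$
is expanded in the coframe $(\bar e^I,\bar f^i)$, and only the coefficient of the top form $\bar e^{0123}\wedge\bar f^{123}$ survives; many summands vanish by degree because a wedge whose vertical part already contains $\bar f^{123}$ kills any further vertical factor.

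The first technical step is the integration identity
\begin{equation*}
  \int_M F(\tau)\,\bar e^{0123}\wedge\bar f^{123}
    =\frac{\pi^4}{2}\int_{-1}^1 F(\tau)\,f(\tau)\,d\tau,
\end{equation*}
valid for any $G$-invariant $F$. Here $\pi^4/2=\Vol(G/H)$ follows from $|H|=8$ together with the bi-invariant normalisation implicit in~\eqref{M2.2}, and the Jacobian $f(\tau)$ comes from the change of frame~\eqref{M2.6} between $(e_1,f_1)$ and $(\bar e_1,\bar f_1)$. I would carry out the expansion on $[-1,0]$ in full; the case $[0,1]$ then follows by the cyclic permutation of the indices $1,2,3$ noted after~\eqref{M3.2}.

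The resulting integrands are rational expressions in $f,f',g,g'$ divided by $f$, which collapse under the key algebraic identity
\begin{equation*}
  f'f''=\frac{16}{q_\mp^2}\,gg'
\end{equation*}
derived from~\eqref{M2.3} and~\eqref{M2.8}. Integration then uses only the antiderivatives of $gg'$, $ff'$, $f^3f'$ together with the boundary values~\eqref{M2.4} and~\eqref{M2.9}, namely $f(\pm1)=0$, $f'(\pm1)=\mp4/p_\pm$, $g(\pm1)=\pm q_\pm/p_\pm$, and $f=1$, $g=0$ in the middle region. The denominator $(q_-^2p_+^2-q_+^2p_-^2)^{-1}$ in the first summand of the proposition is supplied exclusively by the coefficients in~\eqref{M6.6}; the remaining three contributions, together with the polynomial part of the fourth, produce a rational function in $p_\pm,q_\pm$ that, after the overall prefactor $1/(2^7\cdot 7)$, combines into the second summand.

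The main obstacle is pure bookkeeping. Since $\hat p_1(TX,\nabla^{TX})$ splits into three summands and $\hat p_1(p^*TB,\nabla^{p^*TB})$ into four, the four wedge products unfold into on the order of two dozen pieces whose signs from permuting the $\bar e^I\wedge\bar f^i$ factors must be kept straight. The clean form of the right-hand side arises only after applying the identity above and invoking the boundary data to telescope several a priori independent integrals; verifying that the coefficient of $(q_-^2p_+^2-q_+^2p_-^2)^{-1}$ really reduces to $(p_+^2-p_-^2)^2/(p_-^2p_+^2)$ is the most delicate step.
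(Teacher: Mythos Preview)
Your proposal is correct and follows essentially the same route as the paper: substitute the explicit $G$-invariant forms into~\eqref{M4.4}, extract the coefficient of the top form~$\bar e^{0123}\bar f^{123}$, and reduce to elementary integrals over~$[-1,1]$ via the slice volume~$\Vol(\tau^{-1}(t))=f(t)\,\tfrac{\pi^4}{2}$.

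The paper trims the bookkeeping with one observation you do not state: by~\eqref{M3.5} and~\eqref{M4.3}, neither~$p_1(TX,\nabla^{TX})$ nor~$p^*p_1(TB,\nabla^{TB})$ contains the exterior variable~$\bar f^1$ on~$\tau^{-1}[-1,0]$. Hence in the combined hat-form only the $\bar f^1$-carrying pieces (and the residual~$-2\bar f^{123}$ from~$\hat p_1(TX)$) matter; in particular the entire~$\bar e^{123}$ line of~\eqref{M6.6} drops out before any computation. This collapses the two hat-forms into the single factor
\[
\frac1{4\pi^2}\Biggl(\biggl(\frac{16(p_+^2-p_-^2)}{q_-^2p_+^2-q_+^2p_-^2}+1\biggr)\biggl(\frac{g'}{f}\,\bar e^{01}-2g\,\bar e^{23}-2\bar f^{23}\biggr)\bar f^1-2\bar f^{123}\Biggr),
\]
so the ``two dozen pieces'' you anticipate never materialise. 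A second minor point: the identity~$f'f''=\tfrac{16}{q_\mp^2}\,gg'$ is already absorbed into the construction of~\eqref{M6.6}; in the final integration the paper simply uses the antiderivatives~$\tfrac12(f')^2$, $\tfrac14 f^4$, $\tfrac12 f^2$, $\tfrac12 g^2$ and the boundary values~\eqref{M2.4},~\eqref{M2.9} directly.
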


\begin{proof}
  The forms~$p_1\bigl(TX,\nabla^{TX}\bigr)|_{\tau^{-1}[-1,0]}$
  and~$p^*p_1\bigl(TB,\nabla^{TB}\bigr)$ do not contain the exterior
  variable~$\bar f^1$ by~\eqref{M3.5} and~\eqref{M4.3}.
  Hence only the terms in~$\hat p_1\bigl(p^*TB,\nabla^{p^*TB}\bigr)$
  containing the exterior variable~$\bar f^1$
  contribute to the integral over~$\tau^{-1}[-1,0]\subset M$.
  Using~\eqref{M4.4}, we find that
  \begin{multline*}
    \lim_{\eps\to0}\int_{\tau^{-1}[-1,0]}
	p_1\bigl(TM,\nabla^{TM,\eps}\bigr)\,\hat p_1\bigl(TM,\nabla^{TM,\eps}\bigr)\\
    \begin{aligned}
      &=\int_{\tau^{-1}[-1,0]}\Bigl(p^*p_1\bigl(TB,\nabla^{TB}\bigr)
	+p_1\bigl(TX,\nabla^{TX,0}\bigr)\Bigr)\\
      &\kern7em
	\wedge\Bigl(\hat p_1\bigl(p^*TB,\nabla^{p^*TB}\bigr)+\hat p_1\bigl(TX,\nabla^{TX,0}\bigr)\Bigr)\\
      &=\int_{\tau^{-1}[-1,0]}\frac1{4\pi^2}\,\Biggl(
	\biggl(\frac{4f'f''}f+16f^2f'-16f'\biggr)\,\bar e^{0123}\\
      &\kern9em
	+\frac{4gg'}f\,\bar e^{0123}
	+\frac{2g'}f\,\bar e^{01}\bar f^{23}
	-4g\,\bar e^{23}\bar f^{23}\Biggr)\\
      &\kern2em\cdot\frac1{4\pi^2}
	\,\Biggl(\biggl(\frac{16\,p_+^2-16\,p_-^2}
		{q_-^2p_+^2-q_+^2p_-^2}+1\biggr)
	\,\biggl(\frac{g'}{f}\,\bar e^{01}-2g\,\bar e^{23}-2\bar f^{23}\biggr)
	\,\bar f^1-2\bar f^{123}\Biggr)\;,
    \end{aligned}
  \end{multline*}
  and a similar formula gives the integral over~$\tau^{-1}[0,1]$.
  Recall that the generic fibres of~$p$ have volume~$\Vol(S^3)=2\pi^2$,
  and that the slices~$\tau^{-1}(t)\subset B$
  have volume~$f(t)\,\Vol(\R P^3/(\Z/2\Z)^2)=f(t)\,\frac{\pi^2}4$.
  Hence we have
  \begin{equation}\label{M6.8}
    \Vol\bigl(\tau^{-1}(t)\bigr)=f(t)\,\frac{\pi^4}2\;.
  \end{equation}
  Combining this with the above,
  we obtain 
  \begin{multline*}
    \lim_{\eps\to0}\int_M
	p_1\bigl(TM,\nabla^{TM,\eps}\bigr)\,h_\eps\\
    \begin{aligned}
      &=-\int_{-1}^1\,\Biggl(
	\biggl(\frac{p_+^2-p_-^2}{q_-^2p_+^2-q_+^2p_-^2}+\frac18\biggr)
	\,\bigl(4f'f''+16f^3f'-16ff'+4gg'\bigr)(t)\\
      &\kern6em
	+\biggl(\frac{p_+^2-p_-^2}{q_-^2p_+^2-q_+^2p_-^2}+\frac1{16}\biggr)
	\,\bigl(4gg'\bigr)(t)\Biggr)\,dt\\
      &=32\,\frac{(p_+^2-p_-^2)^2}{p_-^2p_+^2(q_-^2p_+^2-q_+^2p_-^2)}
	+\frac8{p_-^2}-\frac8{p_+^2}+\frac{3q_-^2}{8p_-^2}
		-\frac{3q_+^2}{8p_+^2}\;.\qquad\qedhere
    \end{aligned}
  \end{multline*}
\end{proof}

\subsection{The Leray-Serre Spectral Sequence}\label{M7}

The adiabatic limit of the $\eta$-invariant of the odd signature operator
consists of terms
that correspond to the various terms in the Leray spectral sequence.
The $E_0$-term gives the integral of the $\eta$-form of the fibre
against a characteristic form on the base.
The $E_1$-term contributes by an $\eta$-invariant of the base orbifold.
This invariant vanishes here because the base is even-dimensional.
The higher terms contribute by the signs of the corresponding eigenvalues.
There are no similar contributions for~$\eta(D)$
because the fibres have positive scalar curvature 
and hence the fibrewise operator does not admit harmonic spinors.

To see that the Leray spectral sequence does not degenerate at $E_2$,
we note that the fibrewise cohomology forms a trivial bundle over~$B$
with generators~$1$ and~$\bar f^{123}$,
so we have
\begin{equation*}
  E^{p,q}_2=E^{p,q}_3=E^{p,q}_4\cong
  \begin{cases}
    \R	&\text{if~$p\in\{0,4\}$ and~$q\in\{0,3\}$, and}\\
    0	&\text{otherwise,}
  \end{cases}
\end{equation*}
whereas~$E^{0,3}_n=E^{4,0}_n=0$ for~$n\ge 5$ if the Euler class
of~\eqref{M4.1} does not vanish.

\begin{Proposition}\label{M7.P1}
  In the adiabatic limit, we have
  \begin{equation*}
    \frac1{2^5\cdot 7}\lim_{\eps\to 0}
	\sum_{\lambda_0=\lambda_1=0}\sign\lambda_\eps
    =\frac{\sign(q_-^2p_+^2-q_+^2p_-^2)}{2^5\cdot 7}\;.
  \end{equation*}
\end{Proposition}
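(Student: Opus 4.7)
The plan is to apply Corollary~\ref{A5.C2} (i.e.\ Dai's adiabatic limit theorem for the odd signature operator) to the Seifert fibration~$p\colon M\to B$ and to express the contribution of the very small eigenvalues as~$\sum_{r\ge2}\tau_r$. The restriction~$\lambda_0=\lambda_1=0$ in the proposition picks out exactly the very small eigenvalues of Section~\ref{B5}, which are~$O(\eps^2)$ by~\eqref{A4.1}. Since~$\dim B=4$ is even, Proposition~\ref{B2.P3} already guarantees that~$\eta(B_B^{\mathrm{eff}})=0$, so this signature sum is the only remaining contribution to determine.

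Next I would pin down the Leray--Serre spectral sequence. A Mayer--Vietoris argument for the decomposition of~$B$ from Section~\ref{M1} (two neighbourhoods of~$\R P^2$ glued along the generic orbit~$\SO(3)/(\Z/2\Z)^2$, whose rational cohomology agrees with that of~$\SO(3)$) shows that~$H^*(B;\R)\cong H^*(S^4;\R)$. Combined with~$H^*(S^3;\R)$ on the fibre, this reproduces the description of~$E_2$ given in the excerpt: it is one-dimensional at the four corners~$(0,0)$, $(0,3)$, $(4,0)$ and~$(4,3)$, and zero elsewhere. Hence~$d_2=d_3=0$ by bidegree and~$\tau_2=\tau_3=0$. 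The only possibly non-trivial differential is the transgression~$d_4\colon E_4^{0,3}\to E_4^{4,0}$, which is, up to a positive normalising constant depending only on the orientation of the fibre, cup product with the rational Euler class of the associated real rank-four orbibundle~$V=M\times_{\Sp(1)}\H\to B$. By~\eqref{M4.2},
\begin{equation*}
  \int_Be\bigl(V,\nabla^V\bigr)
  =\frac{q_-^2p_+^2-q_+^2p_-^2}{8\,p_-^2p_+^2}\;,
\end{equation*}
which is non-zero exactly when~$q_-^2p_+^2\ne q_+^2p_-^2$; in this case~$E_5=E_\infty$, so~$\tau_r=0$ for all~$r\ge5$.

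It therefore remains to compute~$\tau_4$. Following the definition in~\cite{Dai}, $\tau_4$ is the signature of the symmetric bilinear form
\begin{equation*}
  Q_4(\alpha,\beta)=\int_M\alpha\cup d_4\beta
\end{equation*}
on the one-dimensional space~$E_4^{0,3}$ that is killed at this stage. For the generator~$\alpha=[\bar f^{123}]$, Fubini for the Seifert fibration gives
\begin{equation*}
  Q_4(\alpha,\alpha)
  =c\cdot\Bigl(\int_X\bar f^{123}\Bigr)\,\Bigl(\int_Be\bigl(V,\nabla^V\bigr)\Bigr)
\end{equation*}
with~$c>0$ from the standard transgression normalisation, and~$\int_X\bar f^{123}>0$. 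Hence~$\tau_4=\sign\bigl(q_-^2p_+^2-q_+^2p_-^2\bigr)$, and dividing the resulting identity~$\sum_\nu\sign\lambda_\nu=\tau_4$ by~$2^5\cdot 7$ yields the proposition.

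The hard part is the precise identification of~$\tau_4$ and its sign in the Seifert (rather than fibre-bundle) setting: Dai's construction in~\cite{Dai} is formulated for smooth fibrations, so one must verify that his local model for the very small eigenvalues and the resulting signature pairing go through using the orbibundle charts of Definition~\ref{A1.D2} (the inertia pieces of~$\Lambda B$ live in even codimension, so they do not affect the rational spectral sequence), and one must carefully track the orientations of~$M$, $B$ and~$X$ together with the transgression convention in order to rule out any spurious sign.
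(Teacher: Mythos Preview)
Your strategy matches the paper's: both reduce via Dai's theorem (Corollary~\ref{A5.C2}) to computing the single signature~$\tau_4$ on the one-dimensional~$E_4^{0,3}$, the other~$\tau_r$ vanishing for bidegree reasons. The difference lies in how~$\tau_4$ is evaluated. You invoke the transgression formula abstractly, identifying~$d_4$ with cup product by the Euler class of~$V$ and reading off the sign from~$\int_B e(V,\nabla^V)$, while conceding that the sign of the normalising constant~$c>0$ still needs to be pinned down. The paper instead writes down an explicit form-level representative
\[
  \alpha=\frac{g'}{f}\,\bar e^{01}\bar f^1-2g\,\bar e^{23}\bar f^1-2\bar f^{123}
  \qquad\text{on }\tau^{-1}[-1,0]
\]
(and its analogue on~$\tau^{-1}[0,1]$), checks via~\eqref{M6.3} that~$d\alpha=\tfrac{4gg'}{f}\,\bar e^{0123}$ has horizontal degree~$4$, and then computes~$\int_M\alpha\wedge d\alpha=2\pi^4\bigl(\tfrac{q_-^2}{p_-^2}-\tfrac{q_+^2}{p_+^2}\bigr)$ directly. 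This settles the sign without any appeal to transgression conventions and simultaneously verifies that the class survives to~$E_4$; by contrast, your generator~$\bar f^{123}$ is not itself a legitimate~$E_4$-representative (its differential~\eqref{M6.2} has a horizontal-degree-$2$ piece), so you are implicitly relying on the existence of exactly the correction terms the paper writes down. Your route is conceptually tidier and would generalise more readily, but the paper's explicit computation is what actually closes the sign issue you flag as the hard part.
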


\begin{proof}
  From Theorem~0.3 in~\cite{Dai},
  we know that it is sufficient to study the signature of the quadratic form
  \begin{equation*}
    \<\alpha,\beta\>=(\alpha\wedge d_4\beta)[M]
  \end{equation*}
  on~$E^{0,3}_4$.
  Since~$\dim E^{0,3}_4=1$,
  we only have to compute the sign of~$(\alpha\wedge d_4\alpha)[M]$
  for one~$\alpha\in E^{0,3}_4\setminus\{0\}$.
  As a representative of~$\alpha$,
  we may choose
  \begin{equation*}
    \alpha=
    \begin{cases}
      \frac{g'}{f}\,\bar e^{01}\bar f^1-2g\,\bar e^{23}\bar f^1-2\bar f^{123}
	&\text{on~$[-1,0]$, and}\\
      \frac{g'}{f}\,\bar e^{02}\bar f^2-2g\,\bar e^{31}\bar f^2-2\bar f^{123}
	&\text{on~$[0,1]$.}
    \end{cases}
  \end{equation*}
  By~\eqref{M6.3},
  we know that
  \begin{equation*}
    d\alpha=\frac{4gg'}f\bar e^{0123}
  \end{equation*}
  is of horizontal degree~$4$ as required.
  Moreover,
  integration over the generic fibre of~$p\colon M\to B$ shows
  that~$\alpha$ represents a nontrivial class in~$E_2^{0,3}\cong E_4^{0,3}$.

  The proof is completed by the computation of the sign of
  \begin{multline}\label{M7.1}
      \int_M\alpha\,d\alpha
      =-\int_M\frac{8gg'}f\,\bar e^{0123}\bar f^{123}
      =-\int_{-1}^14\pi^4\,g(t)g'(t)\,dt\\
      =-2\pi^4\,g(t)^2\bigr|_{t=-1}^1
      =2\pi^4\,\biggl(\frac{q_-^2}{p_-^2}-\frac{q_+^2}{p_+^2}\biggr)\;.
      \quad\qedhere
  \end{multline}
\end{proof}

\subsection{The Eells-Kuiper Invariant}\label{M8}
We combine Propositions~\ref{M4.P1}--\ref{M7.P1} and prove Theorem~\ref{C2.T1}
by computing the Eells-Kuiper invariants
of the spaces~$M=M_{(p_-,q_-),(p_+,q_+)}$.

\begin{proof}[Proof of Theorem~\ref{C2.T1}]
  Using Donnelly's formula for the Eells-Kuiper invariant
  and the formulas of Bismut-Cheeger and Dai for the adiabatic limit
  of $\eta$-invariants, we find that
  \begin{multline*}
    \ek\bigl(M_{(p_-,q_-),(p_+,q_+)}\bigr)\\
    \begin{aligned}
      &=\frac{\eta(B)}{2^5\cdot 7}+\frac{\eta+h}2(D)
	-\frac1{2^7\cdot 7}\int_M(p_1\wedge\hat p_1)\bigl(TM,\nabla^{TM,0}\bigr)\\
      &=\frac1{2^{10}\cdot7}
	\biggl(\frac{q_+^2}{p_+^2}-\frac{q_-^2}{p_-^2}\biggr)
	+\frac{\sign(q_-^2p_+^2-q_+^2p_-^2)}{2^5\cdot 7}
	+D(p_-,q_-)-D(p_+,q_+)
    \end{aligned}\\
	-\frac{(p_+^2-p_-^2)^2}
		{2^2\cdot 7\,p_-^2p_+^2(q_-^2p_+^2-q_+^2p_-^2)}
	-\frac{2^6(p_+^2-p_-^2)+3(q_-^2p_+^2-q_+^2p_-^2)}
		{2^{10}\cdot 7\,p_-^2p_+^2}\;.\qedhere
  \end{multline*}
\end{proof}

\subsection{Quaternionic Line Bundles}\label{M11}
In this subsection,
we will prove Theorem~\ref{C2.T2}.
We will compute the $t$-invariant of~\cite{CG}
for sufficiently many vector bundles on~$M$
to determine Crowley's quadratic form~$q\colon H^4(M)\to\Q/\Z$.
To keep computations simple,
we only consider bundle~$p^*E\to M$
where~$E$ is a honest vector bundle over the base orbifold~$B$,
which becomes trivial after restriction to~$B_-$ and~$B_+$.
This will turn out to be sufficient if~$p_-$ and~$p_+$ are relatively prime.

To construct~$E$, we regard a map~$B\to S^4$ of degree one,
where the coordinate~$\tau$ introduced in section~\ref{M2}
is mapped to the hight function~$\R^5\supset S^4\to\R$,
and where~$B_0\cong S^3/Q$ is mapped to the equator~$S^3\subset S^4$
by a map of degree one.
In particular,
for each~$\ell\in\Z$,
there is a quaternionic bundle~$E\to B$,
pulled back from~$S^4$ by the map above,
such that
\begin{equation*}
  c_2(E)[B]=\ell\;.
\end{equation*}
We choose a connection~$\nabla^{E}$ on~$E$
that is flat near the singular strata of~$B$.

To compute class~$c_2(p^*E)$, we have to study the map
	$$p^*\colon\Z\cong H^4(B)\to H^4(M)\cong\Z/k\Z\;.$$
We consider the following commutative diagram.
$$\begin{CD}
  H^3(S^3\times S^3)@<\eta^*<<H^3(M_0)@>\delta>>H^4(M)\\
  @A(\id,0)AA@AAp_0^*A@AAp^*A\\
  H^3(S^3)@<\bar\eta^*<<H^3(B_0)@>\bar\delta>>H^4(B)&\;,
\end{CD}$$
where~$\eta\colon S^3\times S^3\to(S^3\times S^3)/H\cong M_0$
and~$\bar\eta\colon S^3\to S^3/Q\cong B_0$ are quotient maps,
and~$\delta$ and~$\bar\delta$ are the connecting homomorphisms
from the Mayer-Vietoris sequences for the decompositions
\begin{align*}
  M&=(M\setminus M_+)\cup(M\setminus M_-)&&\text{with}&
  (M\setminus M_+)\cup(M\setminus M_-)&\sim M_0\;,\\
  B&=(B\setminus B_+)\cup(B\setminus B_-)&&\text{with}&
  (B\setminus B_+)\cup(B\setminus B_-)&\sim B_0\;.
\end{align*}
From~\cite[section~13]{GWZ}, we know that~$\eta^*$ is injective with
	$$\im\eta^*=\bigl\{\,(a,b)\bigm|a+b\equiv 0\text{ mod }8\,\bigr\}
	\subset\Z^2\cong H^3(S^3\times S^3)\;.$$
Similarly,
	$$\bar\eta^*=8\punkt\colon\Z\cong H^3(B_0)\to H^3(S^3)\cong\Z\;.$$
It follows that~$\eta^*$ maps~$\im p_0^*$ to~$\<(8,0)\>$.
By~\cite{GWZ}, we also know that~$\delta$ is surjective with
	$$\eta^*\ker\delta=\<(-q_-^2,p_-^2),(-q_+^2,p_+^2)\>
	\subset\Z^2\cong H^3(S^3\times S^3)\;.$$
Similarly, $\bar\delta$ is an isomorphism.

Let us determine the subset~$\im p^*\subset H^4(M)$.
All our computations will be done in the standard coordinates
on~$H^3(S^3\times S^3)\cong\Z\times\Z$.
Then~$p^*$ maps a generator of~$H^4(B)$ to the image of~$(8,0)$,
and~$\delta(8\ell,0)=0\in H^4(M)$ if and only if
	$$(8\ell,0)=a\,(-q_-^2,p_-^2)+b\,(-q_+^2,p_+^2)\;.$$
If~$c$ denote the greatest common divisor of~$p_-$ and~$p_+$,
then~$(8\ell,0)\in\ker\delta$ if and only if we can
choose~$a=n\,\frac{p_+^2}{c^2}$ and~$b=-n\,\frac{p_-^2}{c^2}$, so
	$$\ell=n\,\frac{p_-^2q_+^2-p_+^2q_-^2}{8c^2}=\pm\frac{nk}{c^2}\;.$$
Note that~$c^2$ divides~$k$.
In particular,
the image of~$p^*$ has index~$c^2$ in~$H^4(M)\cong\Z/k\Z$.
If~$p_-$ and~$p_+$ are relatively prime,
then~$p^*$ is the isomorphism referred to in Theorem~\ref{C2.T2}.

By~\cite{CG}, see Definition~\ref{C2.D2},
\begin{multline*}
  t(p^*E)
  =\frac{\eta+h}4\,\bigl(D_M^{p^*E}\bigr)-\frac{\eta+h}2\,\bigl(D_M\bigr)\\
   -\frac1{24}\int_M\Bigl(\frac{p_1}2\bigl(TM,\nabla^{TM}\bigr)
		+c_2\bigl(p^*E,\nabla^{p^*E}\bigr)\Bigr)
	\wedge\hat c_2\bigl(p^*E,\nabla^{p^*E}\bigr)\;.
\end{multline*}
Because the fibres are of positive scalar curvature,
we can apply Corollary~\ref{A5.C1}.
Hence,
\begin{multline*}
  \lim_{\eps\to 0}\biggl(\frac{\eta+h}4(D_{M,\eps}^{p^*E})
	-\frac{\eta+h}2(D_{M,\eps})\biggr)\\
    =\frac14\int_{\Lambda B}\hat A_{\Lambda B}\bigl(TB,\nabla^{TB}\bigr)
	\,\eta_{\Lambda B}(\A)
	\,\bigl(\ch\bigl(E,\nabla^E\bigr)-2\bigr)
    =0\;.
\end{multline*}
Here, the singular strata do not contribute
because~$\ch\bigl(E,\nabla^E\bigr)-2$ vanishes near the singular strata.
Over the regular stratum,
the degree~$0$ part of the $\eta$-form is the $\eta$-invariant of the
untwisted Dirac operator on the fibre,
which vanishes because the fibre is a spin symmetric space.
Hence both~$\eta(\A)$ and~$\ch\bigl(E,\nabla^E\bigr)-2$ are of degree~$4$,
so the whole integrand vanishes for degree reasons.

In order to determine~$\hat c_2\bigl(p^*E,\nabla^{p^*E}\bigr)$,
we first check that
\begin{multline*}
  \int_B\frac{4\ell}{\pi^2}\,\frac{p_-^2p_+^2}{p_-^2q_+^2-p_+^2q_-^2}
	\,\frac{2gg'}f\,\bar e^{0123}
  =\ell\,\frac{p_-^2p_+^2}{p_-^2q_+^2-p_+^2q_-^2}
	\int_{-1}^1 2g(t)g'(t)\,dt\\
  =\ell\,\frac{p_-^2p_+^2}{p_-^2q_+^2-p_+^2q_-^2}
	\,\biggl(\frac{q_+^2}{p_+^2}-\frac{q_-^2}{p_-^2}\biggr)
  =\ell
\end{multline*}
because~$\Vol(\tau^{-1}(t))=f(t)\,\frac{\pi^2}4$.
Because we have chosen~$\nabla^E$ flat near the singular strata,
we conclude that
\begin{equation*}
  c_2\bigl(E,\nabla^E\bigr)
  =\frac{4\ell}{\pi^2}\,\frac{p_-^2p_+^2}{p_-^2q_+^2-p_+^2q_-^2}
	\,\frac{2gg'}f\,\bar e^{0123}+d\gamma
\end{equation*}
for some form~$\gamma\in\Omega^3(B)$
that is supported away from the singular set~$B_-\cup B_+$.
By~\eqref{M6.3}, we may put
\begin{equation*}
  \hat c_2\bigl(p^*E,\nabla^{p^*E}\bigr)|_{\tau^{-1}[-1,0]}
  =\frac{2\ell}{\pi^2}\,\frac{p_-^2p_+^2}{p_-^2q_+^2-p_+^2q_-^2}
	\,\biggl(\frac{g'}{f}\,\bar e^{01}
		-2g\,\bar e^{23}-2\bar f^{23}\biggr)\,\bar f^1
	+p^*\gamma\;,
\end{equation*}
and similarly on~$\tau^{-1}[0,1]$.

As in the proof of Proposition~\ref{M6.P1},
we compute the Cheeger-Simons term in the adiabatic limit~$\eps\to 0$.
We have computed the Pontrijagin forms of~$TB$ and~$TX$
in~\eqref{M3.5} and~\eqref{M4.3}.
Over~$\tau^{-1}(-1,0)$, we have
\allowdisplaybreaks
\begin{multline*}
  \Bigl(\frac{p_1}2\bigl(TX,\nabla^{TX}\bigr)
	+p^*\frac{p_1}2\bigl(TB,\nabla^{TB}\bigr)+p^*c_2(E,\nabla^E)\Bigr)
	\cdot\hat c_2\bigl(p^*E,\nabla^{p^*E}\bigr)\\
  \begin{aligned}
    &=\frac1{4\pi^2}\,\Biggl(
	\biggl(\frac{2f'f''}f+8f^2f'-8f'\biggr)\,\bar e^{0123}\\
    &\kern8em
	+\frac{2gg'}f\,\bar e^{0123}
	+\frac{g'}f\,\bar e^{01}\bar f^{23}
	-2g\,\bar e^{23}\bar f^{23}\\
    &\kern8em
	+16\ell\,\frac{p_-^2p_+^2}{p_-^2q_+^2-p_+^2q_-^2}
		\,\frac{2gg'}f\,\bar e^{0123}
	+4\pi^2\,p^*d\gamma\Biggr)\\
    &\kern2em\cdot
	\,\Biggl(\frac{2\ell}{\pi^2}\,\frac{p_-^2p_+^2}{p_-^2q_+^2-p_+^2q_-^2}
	\,\biggl(\frac{g'}{f}\,\bar e^{01}
		-2g\,\bar e^{23}-2\bar f^{23}\biggr)\,\bar f^1
	+p^*\gamma\Biggr)\\
    &=-\frac{2\ell}{\pi^4}\,\frac{p_-^2p_+^2}{p_-^2q_+^2-xp_+^2q_-^2}
	\cdot\Biggl(\biggl(\frac{f'f''}f+4f^2f'-4f'+\frac{2gg'}f\\
    &\kern8em
	+16\ell\,\frac{p_-^2p_+^2}{p_-^2q_+^2-p_+^2q_-^2}
		\,\frac{gg'}f\biggr)\,\bar e^{0123}
	+2\pi^2\,p^*d\gamma\Biggr)\,\bar f^{123}
  \end{aligned}
\end{multline*}
\interdisplaylinepenalty1000
Over~$\tau^{-1}(0,1)$,
we obtain the same right hand side.
By partial integration and~\eqref{M6.2},
we see that there is no contribution from~$p^*\gamma$ and~$p^*d\gamma$.

By~\eqref{M6.8},
we compute
\begin{align*}
    t(p^*E)
    &=-\lim_{\eps\to0}\,\frac1{24}\int_M
	\Bigl(\frac{p_1}2\bigl(TM,\nabla^{TM,\eps}\bigr)
		+p^*c_2(E,\nabla^E)\Bigr)
	\cdot\hat c_2\bigl(p^*E,\nabla^{p^*E}\bigr)\\
    &=\frac{\ell}{24}\,\frac{p_-^2p_+^2}{p_-^2q_+^2-p_+^2q_-^2}\int_{-1}^1
	\biggl(f'f''+4f'f^3-4f'f+2gg'\\
    &\kern13em
	+16\ell\,\frac{p_-^2p_+^2}{p_-^2q_+^2-p_+^2q_-^2}\,gg'\biggr)(t)\,dt\\
    &=\frac{\ell}{24}\,\frac{p_-^2p_+^2}{p_-^2q_+^2-p_+^2q_-^2}
	\,\biggl(\frac{8}{p_+^2}-\frac{8}{p_-^2}
		+\frac{q_+^2}{p_+^2}-\frac{q_-^2}{p_-^2}+8\ell\biggr)\\
    &=\frac \ell3\,\frac{p_-^2-p_+^2+\ell\,p_-^2p_+^2}
	{p_-^2q_+^2-p_+^2q_-^2}+\frac \ell{24}\;.\quad\qed
\end{align*}

\bibliographystyle{alpha}

\bigskip
\noindent Sebastian Goette\\
Mathematisches Institut\\ Universit\"at Freiburg\\ Eckerstra\ss e~1\\
79104 Freiburg\\ Germany\\
E-mail: sebastian.goette@math.uni-freiburg.de
\enddocument